\documentclass[11pt]{article}

\usepackage[english]{babel}
\usepackage{authblk}

\title{Cluster parking functions II: \\ $q,t$-dihedral sieving via diagonal coinvariants}

\author{Matthieu Josuat-Vergès}

\affil{Université Paris Cité, CNRS, IRIF}

\usepackage{amsmath,amssymb,amsthm,mathtools}
\usepackage[linktocpage,unicode]{hyperref}

\usepackage{tikz}
\usepackage{physics}

\usepackage{a4wide}

\usepackage{caption}
\usepackage{enumitem}
\usepackage{url}

\hypersetup{
    pdftoolbar=true,        
    pdfmenubar=true,        
    pdffitwindow=false,     
    pdfstartview={FitH},    
    pdftitle={},
    pdfauthor={},
    pdfsubject={},
    pdfkeywords={},
    pdfnewwindow=true,      
    colorlinks=true,       
    linkcolor=red,          
    citecolor=blue,        
    filecolor=magenta,      
    urlcolor=cyan,           
    unicode=true          
}

\newtheorem{theo}{Theorem}[section]

\newtheorem{lemm}[theo]{Lemma}
\newtheorem{prop}[theo]{Proposition}
\newtheorem{conj}[theo]{Conjecture}

\theoremstyle{definition}
\newtheorem{defi}[theo]{Definition}
\newtheorem{rema}[theo]{Remark}

\DeclareMathOperator{\Aut}{Aut}
\DeclareMathOperator{\dih}{Dih}
\DeclareMathOperator{\rot}{Rot}

\newcommand{\CC}{\mathcal{C}}
\newcommand{\RR}{\mathcal{R}}

\renewcommand{\SS}{\mathcal{S}}

\newcommand{\II}{\mathcal{I}}

\newcommand{\FFF}{\mathbf{F}}
\newcommand{\RRR}{\mathbf{R}}
\newcommand{\SSS}{\mathbf{S}}
\newcommand{\III}{\mathbf{I}}

\newcommand{\bU}{\mathbb{U}}
\newcommand{\bV}{\mathbb{V}}
\newcommand{\bW}{\mathbb{W}}

\newcommand{\RRRR}{\mathtt{R}}
\newcommand{\SSSS}{\mathtt{S}}

\DeclareMathOperator{\Fix}{Fix}

\DeclareMathOperator{\CPF}{CPF}
\DeclareMathOperator{\PF}{PF}
\DeclareMathOperator{\NC}{NC}

\begin{document}

\maketitle

\begin{abstract}
    In a previous work, we defined the complex of cluster parking functions.  On one side, they encode the type-refined enumeration of faces of the cluster complex, and on the other side, they have a reduced homology which is isomorphic to (ungraded) diagonal coinvariants. The goal of this work is to take into account the underlying dihedral symmetry.  We thus have a product of a dihedral group and a symmetric group (there is a precise conjecture in the case of other finite Coxeter groups, but we focus on symmetric groups because of technicalities about diagonal coinvariants beyond this case).  Under the action of the product group, the reduced homology of cluster parking functions is conjecturally isomorphic to diagonal coinvariants up to tensoring by a sign character of the dihedral group.  This isomorphism can be reformulated as a dihedral sieving phenomenon.  The main technical contribution is the definition of the dihedral automorphism group of cluster parking functions, and we discuss various features of the reduced homology character and its conjectural connection with diagonal coinvariants.
\end{abstract}

\section{Introduction}

\subsection{Parabolic types and dihedral symmetry in the generalized cluster complex}

The {\it generalized cluster complex} $\Gamma^{(m)}$ associated to a finite real reflection group $W$ and an integer $m \geq 1$ is a simplicial complex with many combinatorial properties.  It was originally defined by Fomin and Reading~\cite{fominreading}, then reformulated in the context of representation theory by Thomas~\cite{thomas} and Zhu~\cite{zhu}. We refer to the thorough monograph by Stump, Thomas and Williams~\cite{stumpthomaswilliams}.  In the case of the symmetric group $\mathfrak{S}_n$, it can be described explicitly in terms of certain dissections of the regular $(mn+2)$-gon.

In a joint work with Douvropoulos~\cite{douvropoulosjosuatverges1,douvropoulosjosuatverges2}, we defined the {\it parabolic type} of each face $f \in \Gamma^{(m)}$.  It is a certain conjugacy class in $W$ (but it can also be seen as a conjugacy class of parabolic subgroups).  For example, in type $A_{n-1}$ and $m=1$, we associate an integer partition of $n$ to each dissection of the regular $(n+2)$-gon (these dissections form the combinatorial realization of the cluster complex in this case). Concretely, $\lambda = (\lambda_1,\lambda_2,\dots)$ is defined by the rule that each inner $(i+2)$-gon inside the dissection gives a part $i$ in $\lambda$.  For example, the dissection
\[
    \begin{tikzpicture}[scale=1.4]
        \draw[very thick] (1.00, 0.000) -- (0.932, 0.361) -- (0.739, 0.674) -- (0.446, 0.895) -- (0.0923, 0.996) -- (-0.274, 0.962) -- (-0.603, 0.798) -- (-0.850, 0.526) -- (-0.983, 0.184) -- (-0.983, -0.184) -- (-0.850, -0.526) -- (-0.603, -0.798) -- (-0.274, -0.962) -- (0.0923, -0.996) -- (0.446, -0.895) -- (0.739, -0.674) -- (0.932, -0.361) -- (1.00,0.00);
        \draw[very thick,rounded corners=1] (1.00, 0.000) -- (0.0923, 0.996);
        \draw[very thick,rounded corners=1] (1,0) -- (-0.603, 0.798);
        \draw[very thick,rounded corners=1] (1.00, 0.000) -- (-0.274, 0.962);
        \draw[very thick,rounded corners=1] 
        (0.0923, -0.996) -- (-0.983, 0.184);
    \end{tikzpicture}
\]
of the heptadecagon gives the partition 15=6+4+3+1+1.  In general, we used a connection with parking functions to get an exact enumeration formula for the number of faces of each parabolic type, see~\cite[Corollary~7.3]{douvropoulosjosuatverges1}. 

The automorphism group of $\Gamma^{(m)}$ was described in~\cite{josuatverges}.  We will consider here the dihedral automorphism group generated by two elements $\RR$ and $\SS$ that satisfy $\RR^{mh+2}= (\RR\SS)^2 = \SS^2 = \II$ (see the definitions in Section~\ref{sec:dihedralcluster}). 

\begin{theo} \label{theo:invariant}
    For each (parabolic) conjugacy class $\CC$, the set of faces $f \in 
    \Gamma^{(m)}$ of parabolic type $\CC$ is stable under the action of the dihedral group $\langle \RR , \SS \rangle$.
\end{theo}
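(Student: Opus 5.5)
The plan is to reduce to the two generators and, for each of them, show that the parabolic subgroup attached to a face is changed only by conjugation in $W$. Since $\langle \RR,\SS\rangle$ is generated by $\RR$ and $\SS$, and each acts as a bijection on the set of faces of $\Gamma^{(m)}$ preserving dimension, it suffices to prove that $f$ and $\RR(f)$ have the same parabolic type, and likewise for $\SS(f)$. To do this I will use the description of the parabolic type from \cite{douvropoulosjosuatverges1}. For $m=1$, a face $f$ is a compatible set of almost positive roots. Its parabolic type is the conjugacy class of the parabolic subgroup $W_f$, the pointwise stabilizer of the span of the roots in $f$. For general $m$, one uses colored almost positive roots and forgets the colors when forming this span.

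Both $\RR$ and $\SS$ are built from the Fomin--Zelevinsky involutions $\tau_+$ and $\tau_-$, with an extra shift of colors for $m>1$; see Section~\ref{sec:dihedralcluster}. Each $\tau_\epsilon$ acts on almost positive roots as follows. On roots other than the negative simple roots $-\alpha_i$ with $i \in I_\epsilon$, it is the restriction of the linear map $c_\epsilon$, a product of commuting simple reflections. On the excluded roots it fixes $-\alpha_i$. So the key step is a one-step lemma: for every face $f$ and $\epsilon \in \{+,-\}$, the subgroup $W_{\tau_\epsilon(f)}$ is conjugate in $W$ to $W_f$.

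First case: $f$ contains no $-\alpha_i$ with $i\in I_\epsilon$. Then $\tau_\epsilon$ is linear on $f$, so $\operatorname{span}(\tau_\epsilon f)=c_\epsilon\operatorname{span}(f)$ and $W_{\tau_\epsilon f}=c_\epsilon W_f c_\epsilon^{-1}$. Second case: $f=f'\sqcup\{-\alpha_i : i\in J\}$ with $J\subseteq I_\epsilon$ nonempty. Compatibility says each $\beta\in f'$ has zero $\alpha_i$-coordinate for $i\in J$. I would compare $c_\epsilon$ with $c'_\epsilon=\prod_{i\in I_\epsilon\setminus J}s_i$. The simple reflections in $c_\epsilon$ commute, so for $\beta\in f'$ the difference $c_\epsilon\beta-c'_\epsilon\beta$ lies in $\operatorname{span}\{\alpha_i: i\in J\}$. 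Hence
\[
  \operatorname{span}(\tau_\epsilon f)=\operatorname{span}\bigl(c'_\epsilon f'\cup\{\alpha_i: i\in J\}\bigr)=c'_\epsilon\operatorname{span}(f),
\]
where the last equality holds because $c'_\epsilon$ fixes $\alpha_i$ for $i\in J$: these $\alpha_i$ are orthogonal to the simple roots in $I_\epsilon\setminus J$. Therefore $W_{\tau_\epsilon f}=c'_\epsilon W_f {c'_\epsilon}^{-1}$ is again conjugate to $W_f$.

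Composing gives the claim for $\RR$. For $\SS$, I would either use its expression through the $\tau_\epsilon$ together with the diagram automorphism induced by $w_0$ (which preserves conjugacy classes, being conjugation by $w_0$ or acting on simple roots by a diagram symmetry that fixes parabolic conjugacy classes), or reduce $\SS$ to a conjugate of $\tau_+$ by a power of $\RR$.

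The main obstacle I expect is the case $m>1$. There $\RR$ acts on colored roots by incrementing colors and applies $\tau$ only to roots of the top or bottom color, so the identification of faces with their underlying root sets must be checked carefully: distinct colored roots may share an underlying root, and the span should be taken with multiplicity ignored. A fallback would be to use the bijection of \cite{douvropoulosjosuatverges1} between faces and $m$-divisible noncrossing-partition data, under which $\RR$ becomes Kreweras-type complementation (conjugation by the Coxeter element $c$). Type is visibly invariant there, since the parabolic type is the conjugacy class of the parabolic closure of the noncrossing partition.
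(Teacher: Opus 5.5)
There is a genuine gap: you prove invariance of the wrong quantity. In this paper the parabolic type of a face $f$ is the conjugacy class of $W_{\underline{f}}$, where $\underline{f}=(\prod f)^{-1}c$ is the Kreweras complement of the noncrossing partition $\prod f=f_{(mh+1)}\cdots f_{(0)}$.

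This is not the pointwise stabilizer of $\operatorname{span}(f)$, and it is not determined by $\operatorname{span}(f)$ up to $W$-conjugacy. The ranks already disagree. $W_{\underline{f}}$ always has rank $n-|f|$. The pointwise stabilizer of $\operatorname{span}(f)$ has that rank only when $\operatorname{span}(f)^\perp$ is spanned by roots.

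For a concrete check, take type $A_3$ with $m=1$, where vertices of $\Gamma$ are diagonals of the hexagon. A short diagonal has type $(3,1)$ and a long one has type $(2,2)$. Yet each vertex is a single root, all roots are $W$-conjugate, and your $W_f$ is always of type $(2,1,1)$. So even a complete proof that $\tau_\pm$ preserve $\operatorname{span}(f)$ up to conjugacy would not give the theorem.

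Your fallback has the same problem. The type is the class of the parabolic closure of the Kreweras complement of $\prod f$, not of $\prod f$ itself. The class of $K(\pi)$ is not a function of the class of $\pi$: in $\mathfrak{S}_4$, the transpositions $(12)$ and $(13)$ have complements of types $(3,1)$ and $(2,2)$. Also, $\RR$ does not act on $\underline{f}$ by conjugation by $c$.

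The missing idea is to follow the ordered product $\prod f$ itself through the block decomposition of $\Phi^{(m)}_{\geq -1}$, as the paper does. Since $\RR(\varPi_{(i)})=\varPi_{(i-h)}$ and $\SS(\varPi_{(i)})=\varPi_{(-i)}$, one gets:
\begin{itemize}
\item $\RR(f)_{(i)}=f_{(i+h)}$ or $c\,f_{(i+h)}\,c^{-1}$, according to the range of $i$;
\item $\SS(f)_{(0)}=f_{(0)}$ and $\SS(f)_{(i)}=c_\bullet f_{(-i)}c_\bullet$ for $i\neq 0$.
\end{itemize}
Substituting into $(\prod f')^{-1}c$ gives $\underline{\RR(f)}=\kappa_f^{-1}\,\underline{f}\,\kappa_f$ with $\kappa_f=f_{(0)}\cdots f_{(h-1)}$. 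It also gives $\underline{\SS(f)}=\lambda_f\,\underline{f}^{-1}\lambda_f$ with the involution $\lambda_f=f_{(0)}c_\circ$. One concludes with $W_{\underline{f}^{-1}}=W_{\underline{f}}$, or with the fact that every element of $W$ is conjugate to its inverse.

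This works uniformly in $m$, so the colored-root bookkeeping you flag as the main obstacle never arises. It also handles $\SS$ directly rather than through an unspecified reduction.

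Separately, your convention for $\tau_\epsilon$ is reversed. The map acting by $c_\epsilon=\prod_{i\in I_\epsilon}s_i$ fixes $-\alpha_i$ for $i\notin I_\epsilon$; compare $\SS$ for $m=1$, which fixes $-\Pi_\circ$ and acts by $c_\bullet$ elsewhere. So in your second case $J\cap I_\epsilon=\emptyset$, and the comparison with $\prod_{i\in I_\epsilon\setminus J}s_i$ does not go through as written.
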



Rao and Suk~\cite{raosuk} introduced a notion of {\it dihedral sieving phenomenon}, encoding a dihedral action on a combinatorial set.  The previous theorem suggests investigating dihedral sieving in each set $\Gamma^{(m)}_{\CC}$, defined as the faces of $\Gamma^{(m)}$ of parabolic type $\CC$.  In the case $\CC = \{1_W\}$, $\Gamma^{(m)}_{\{1_W\}}$ contains all clusters (facets of $\Gamma^{(m)}$) and dihedral sieving phenomena have been investigated by Stier, Wellman, Xu~\cite{stierwellmanxu} through $q,t$-Catalan numbers.\footnote{Rao and Suk showed that dihedral sieving phenomena for the dihedral group $I_2(k)$ with odd $k$ can be encoded with symmetric polynomials in two variables, while the corresponding result for even $k$ is more complicated. Still, some actions of $I_2(k)$ with even $k$ can also be encoded by symmetric polynomial in two variables.  All actions considered in~\cite{stierwellmanxu} and in the present work are of this kind.}  {\it Cyclic} sieving phenomena for the action of $\RR$ on the sets $\Gamma^{(m)}_{\CC}$ have been obtained by Pouillart~\cite{pouillart1} in many cases.


\subsection{Extension of the dihedral symmetry to cluster parking functions}

The enumeration formula for the sets $\Gamma^{(m)}_\CC$ mentioned above can be interpreted using certain characters (in the parabolic Burnside ring of $W$) that are related to parking functions.  This led us to define {\it cluster parking functions}~\cite{douvropoulosjosuatverges2}. Concretely, an element of $\CPF^{(m)}$ is pair $(f,w W_{\underline{f}})$ where $f \in \Gamma^{(m)}$ and $w W_{\underline{f}}$ is a {\it parking function}, seen as a coset modulo a parabolic subgroup $W_{\underline{f}}$ that depends on $f$.  The action of $W$ is given by
\[
    u \cdot ( f, wW_{\underline{f}} ) 
    = 
    ( f, uwW_{\underline{f}} ).
\]
They form a simplicial complex, and by design their reduced homology character is a parking character twisted by the sign character.  More precisely, the reduced homology of $\CPF^{(m)}$ is nonzero only in degree $n-1$ (by the main result of~\cite{douvropoulosjosuatverges2} which says that $\CPF^{(m)}$ is a Cohen-Macaulay complex), and the  character of $\tilde H_{n-1}(\CPF^{(m)})$ as a $W$-space is
\begin{align}\label{eq:char_rw}
    w \mapsto \epsilon(w)\cdot (mh+1)^{\dim(\Fix(w))}
\end{align}
(where $\epsilon$ is the sign character, $h$ is the Coxeter number, and $\Fix(w)$ is the fixed point subspace of $w$ in the geometric representation, see Section~\ref{sec:parking} for details).  Up to the sign character, this is a character that appears in the rich theory of {\it parking spaces}~\cite{armstrongreinerrhoades,rhoades}.

Theorem~\ref{theo:invariant} suggests to extend the dihedral action on $\Gamma^{(m)}$ to cluster parking functions, in a way that is compatible with the action of $W$ and the simplicial structure.  Combinatorial models in types $A$, $B$, and $I$ also suggest that this is possible (see~\cite[Open problem~10.3]{douvropoulosjosuatverges2}).  The proof of Theorem~\ref{theo:invariant} actually provides an explicit answer to this question.

\begin{theo} \label{theo:extend}
     The automorphisms $\RRR,\SSS \in \Aut( \CPF^{(m)})$ defined in Section~\ref{sec:symCPF} are such that:
    \begin{itemize}
        \item $p \circ \RRR = \RR \circ p $ and $p \circ \SSS = \SS \circ p$, where $p$ is the projection $p(f,wW_{\underline{f}}) = f$,
        \item $\RRR$ and $\SSS$ commute with the action of $W$,
        \item $\RRR^{mh+2} = (\RRR\SSS)^2 = \SSS^2 = \III$ (i.e., they satisfy the same relations as $\RR$ and $\SS$, with the {\it caveat} that it is possible that $\RR^{(mh+2)/2} = \II$, but  $\RRR^{(mh+2)/2} \neq \III$ in general).
    \end{itemize}
    Assuming that the rank of $W$ is at least $2$, another pair $(\RRR',\SSS')$ satisfying the same conditions is such that $\RRR^{-1}\circ \RRR'$ (respectively $\SSS^{-1}\circ \SSS'$) is the action of a central element of $W$.
\end{theo}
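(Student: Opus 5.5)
The strategy is to build $\RRR$ and $\SSS$ face by face. For each face $f$ we need a $W$-equivariant bijection from the cosets $W/W_{\underline{f}}$ onto $W/W_{\underline{\RR(f)}}$. Such a bijection is right multiplication by some $g_f$ with $g_f^{-1}W_{\underline{f}}\,g_f = W_{\underline{\RR(f)}}$, so that
\[
\RRR(f,wW_{\underline{f}}) = (\RR(f), w g_f W_{\underline{\RR(f)}}).
\]
Theorem~\ref{theo:invariant} is what makes this possible: $\RR$ preserves parabolic types, so $W_{\underline{f}}$ and $W_{\underline{\RR(f)}}$ are conjugate. I would take the conjugating elements $g_f$ from the proof of Theorem~\ref{theo:invariant}. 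That proof realizes $\RR$ via the Coxeter-element description of $\Gamma^{(m)}$: colored almost-positive roots, with $\RR$ acting essentially by the Coxeter element $c$ on roots and shifting colors, and $\SS$ coming from the bipartite involution. There the parabolic subgroup $W_{\underline{\RR(f)}}$ is obtained from $W_{\underline{f}}$ by conjugating by an explicit element, typically $c$ or a bipartite factor $c_\pm$.

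The three properties are then checked in order. First, $p\circ\RRR=\RR\circ p$ holds by construction. Second, commuting with the left $W$-action is automatic, since right multiplication commutes with left multiplication. Third, $\RRR$ must be a simplicial automorphism: if $f'\subset f$, then $W_{\underline{f}}\subset W_{\underline{f'}}$, and the face relation $(f',wW_{\underline{f'}})\subset(f,wW_{\underline{f}})$ must be preserved. This forces the choices to be coherent, namely $g_{f'}W_{\underline{\RR(f')}} = g_f W_{\underline{\RR(f')}}$. The cleanest way to get this is to choose $g_f$ uniformly, for instance depending only on which colors or roots of $f$ wrap around under $\RR$.

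For the relations, I would compute $\RRR^{mh+2}$ as right multiplication by a product of $g$'s along the orbit. Using $c^h=1$ (or $w_0$ when $h$ is odd) and the relation $\RR^{mh+2}=\II$, this product should be a central element $z$. One then adjusts the choices so that $z=1$, and the same approach handles $\SSS^2$ and $(\RRR\SSS)^2$. This also explains the caveat: $\RRR^{(mh+2)/2}$ acts by $w_0$ on cosets and need not be trivial.

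For uniqueness, let $(\RRR',\SSS')$ be another such pair. Then $\RRR^{-1}\RRR'$ is a $W$-equivariant simplicial automorphism lying over the identity of $\Gamma^{(m)}$. On each face it is right multiplication by some $n_f\in N_W(W_{\underline{f}})$ modulo $W_{\underline{f}}$. At a facet ($W_{\underline{f}}=1$) this is right multiplication by an element $n_f$. Two adjacent facets share a codimension-one face, and compatibility with that face gives $n_f\in n_{f'}W_{\underline{g}}$, where $W_{\underline{g}}$ is generated by one reflection. Using vertices, whose parabolic subgroups have corank one, together with the fact that the intersection of these subgroups over the vertices of a facet is trivial, one shows that $n_f$ is independent of $f$. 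By strong connectivity of the pseudomanifold $\Gamma^{(m)}$ when the rank is at least $2$, we obtain a single $n$ lying in every normalizer $N_W(W_{\underline{f}})$. Since every reflection subgroup $\langle t\rangle$ occurs as some $W_{\underline{f}}$, $n$ commutes with all reflections and is therefore central.

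The main obstacle will be the coherence of the choices $g_f$ along face inclusions, together with getting the orbit product to be exactly $1$ rather than merely central. I would first try to set $g_f=1$ for faces not meeting the "wrap-around" colors and $g_f=c$ (respectively $c_\pm$) otherwise, then verify the face condition case by case using the explicit description of $W_{\underline{f}}$ from~\cite{douvropoulosjosuatverges1}.
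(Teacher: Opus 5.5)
Your framework matches the paper's. A $W$-equivariant lift must have the form $(f,wW_{\underline f})\mapsto(\RR(f),wg_fW_{\underline{\RR(f)}})$ with $g_f^{-1}W_{\underline f}g_f=W_{\underline{\RR(f)}}$. Equivariance is then automatic, and simpliciality amounts to $g_fg_{f'}^{-1}\in W_{\underline{f'}}$ for $f'\subset f$. But the existence half of the theorem consists precisely in exhibiting $g_f$ and verifying this condition and the relations, and there the proposal has a genuine gap.

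The conjugating elements produced in the proof of Theorem~\ref{theo:invariant} are not $c$ or a bipartite factor. One has $\underline{\RR(f)}=\kappa_f^{-1}\underline f\kappa_f$ with $\kappa_f=f_{(0)}\cdots f_{(h-1)}$, the product of the reflections of the vertices of $f$ lying in the wrap-around blocks $\varPi_{(0)},\dots,\varPi_{(h-1)}$. For $\SS$ one has $\underline{\SS(f)}=\lambda_f^{-1}\underline f^{-1}\lambda_f$ with $\lambda_f=f_{(0)}c_\circ$. So your instinct that $g_f$ should depend only on the wrap-around vertices is right, but $g_f$ must be the product of their reflections.

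Your fallback $g_f\in\{1,c^{\pm1}\}$ already fails in type $A_2$. Take $\Pi_\bullet=\{\alpha_1\}$, $\Pi_\circ=\{\alpha_2\}$, $c=s_1s_2$, $m=1$, so that the blocks $\varPi_{(0)},\dots,\varPi_{(4)}$ are $\{-\alpha_2\},\{\alpha_1\},\{\alpha_1+\alpha_2\},\{\alpha_2\},\{-\alpha_1\}$. For the facet $f=\{\alpha_1+\alpha_2,\alpha_2\}$ and its vertex $f'=\{\alpha_2\}$, coherence would need $c^{\pm1}\in W_{\underline{f'}}=\{e,t_{\alpha_1+\alpha_2}\}$, which is false. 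By contrast, $\kappa_f\kappa_{f'}^{-1}=t_{\alpha_1+\alpha_2}$ lies in this subgroup.

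The missing lemma is $\kappa_f\kappa_{f'}^{-1}\in W_{\underline{f'}}$, and the same for $\lambda$. Reduce to codimension one, with the removed vertex in a wrap-around block. Let $t$ be the reflection with $(\prod f')t=\prod f$; then $t\leq K(\prod f')$, hence $t\in W_{\underline{f'}}$. Cancelling the common left factors $f_{(mh+1)}\cdots f_{(h)}$ gives $\kappa_f\kappa_{f'}^{-1}=t$.

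Your plan for the relations cannot work as stated. First, the target is wrong. $\RRR^{mh+2}$ is right multiplication of $wW_{\underline f}$ by the orbit product $P_f=g_fg_{\RR(f)}\cdots g_{\RR^{mh+1}(f)}$, which is the identity iff $P_f\in W_{\underline f}$, not iff $P_f$ is central (also, $c^h=1$ always).

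Second, ``adjusting the choices so that $z=1$'' is impossible. Any two coherent $W$-equivariant lifts of $\RR$ (or of $\SS$) differ by the action of a central element $z_0$. Moreover $(\SSS z_0)^2=\SSS^2$, $(\RRR z_0\SSS z_1)^2=(\RRR\SSS)^2$, and $(\RRR z_0)^{mh+2}=\RRR^{mh+2}$, because a nontrivial center $\{e,w_0\}$ forces $h$, hence $mh+2$, to be even. So each relation holds for all lifts or for none, and it has to be computed.

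The paper computes $\kappa_f\kappa_{\RR(f)}\cdots\kappa_{\RR^{mh+1}(f)}=\underline f^{\,h}\in W_{\underline f}$. It writes $\RR^i(f)_{(j)}=u_{hi+j}$ with $u_{(mh+2)i+j}=c^if_{(j)}c^{-i}$, and groups $u_0u_1\cdots u_{(mh+2)h-1}$ in two ways. Then $\SSS^2=\III$ follows from $\lambda_{\SS(f)}=\lambda_f$ and $\lambda_f^2=1$. Finally $(\SSS\RRR)^2=\III$ follows from $\kappa_f\lambda_{\RR(f)}=\mu_f:=(f_{(0)}\cdots f_{(h)})c_\circ$ and $\mu_{\SS\RR(f)}=\mu_f^{-1}$.

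Your uniqueness argument is close to the paper's, but ``$n_f$ is independent of $f$'' does not follow from the vertex cosets. These only give $n_fn_{f'}^{-1}\in W_{\underline g}=\{e,t_\beta\}$ for adjacent facets. Use the adjacency inside one fiber instead. The facets $(f,\{w\})$ and $(f,\{wt_\beta\})$ are the only facets over $f$ containing the face over $f-\{\alpha\}$, which forces $wt_\beta n_f=wn_ft_\beta$. So $n_f$ commutes with $t_{\beta_1},\dots,t_{\beta_n}$, which generate $W$ (Lemma~\ref{lemm:betagen}), and hence $n_f$ is central. Then $n_fn_{f'}^{-1}$ is a central element of $\{e,t_\beta\}$, hence trivial in rank at least $2$.
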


Recall that the center of $W$ is either trivial (so that $\RRR,\SSS$ are unique) or is $\{e,w_0\}$ where $w_0$ is the long element.

A natural question is to investigate the action of $\langle \RRR,\SSS \rangle \times W$ on the reduced homology of $\CPF^{(m)}$.  Building on the idea of dihedral sieving via $q,t$-Catalan numbers, we are led to consider a dihedral action on diagonal coinvariants. 


\subsection{\texorpdfstring{Diagonal coinvariants and $q,t$-dihedral sieving phenomena}{Diagonal coinvariants and q,t-dihedral sieving phenomena}}
\label{sec:introdihedral}

Haiman's {\it diagonal coinvariants}~\cite{haiman} give a natural construction of a bigraded representation of the symmetric group $\mathfrak{S}_n$ of dimension $(n+1)^{n-1}$.  The associated Hilbert series in two variables $q$ and $t$ is subject to many combinatorial developments. 
There is an extension to other finite Coxeter groups by Gordon~\cite{gordon} (the naive generalization of diagonal coinvariants is too big, and one needs to consider a quotient defined via the rational Cherednik algebra).  For the rest of this introduction, we focus on the case of the symmetric group and assume $m=1$.  Let $\Gamma_n$ and $\CPF_n$  respectively denote the cluster complex and the complex of cluster parking functions in this case.

Let $A = \mathbb{C}[X_1,\dots,X_n, Y_1,\dots,Y_n]$ be the polynomial algebra in $2n$ variables.  The symmetric group acts diagonally by:
\begin{align*}
    \sigma \cdot X_i &= X_{\sigma^{-1}(i)}, \\
    \sigma \cdot Y_i &= Y_{\sigma^{-1}(i)}
\end{align*}
The dihedral group of order $2n+2$, denoted by $\dih_{n+2}$, acts on $A$ via two maps $\RRRR$ and $\SSSS$ defined by:
\begin{align} \label{eq:defRRRR}
    \RRRR( X_i )  &=  e^{2i\pi / (n+2)} X_i, \\
    \RRRR( Y_i )  &=  e^{- 2i\pi / (n+2)} Y_i
\end{align}
and
\[
    \SSSS(X_i) = Y_i, \qquad \SSSS(Y_i) = X_i.
\]
We observe that $A = \mathbb{C}[\rho_1\otimes\rho_2]$ where $\rho_1$, $\rho_2$ are the defining representation $\dih_{n+2}\to O(\mathbb{R}^2)$ and $\mathfrak{S}_n \to O(\mathbb{R}^n)$.

Let $I \subset A$ denote the ideal generated by constant-term-free $\mathfrak{S}_n$-invariant polynomials.  The quotient $A/I$ is the space of {\it diagonal coinvariants}, see Haiman~\cite{haiman}.  By homogeneity, it is straightforward to see that $I$ is invariant under the action of $\dih_{n+2}$, so that $\dih_{n+2}\times \mathfrak{S}_n$ acts on $A/I$.

\begin{conj}[$\Lambda\Sigma = \Delta$]  \label{conj1}
    We have $\Lambda\Sigma = \Delta$, where:
    \begin{itemize}    
        \item $\Lambda$ is the character of $\dih_{n+2}\times \mathfrak{S}_n$ acting on the reduced homology group $\tilde H_{n-2}(\CPF_n)$,
        \item $\Sigma$ is a sign character of $\dih_{n+2}$ extended to $\dih_{n+2}\times \mathfrak{S}_n$ (see Definition~\ref{def:sigma}),
        \item $\Delta$ is the character of $\dih_{n+2}\times \mathfrak{S}_n$ acting on diagonal coinvariants $A/I$.
    \end{itemize}
\end{conj}
    

It is interesting to reformulate the identity $\Lambda\Sigma = \Delta$ in terms of the bigraded character of $A/I$, that we denote by $\varDelta$.  This is a character of $\mathfrak{S}_n$ that polynomially depends on two variables $q$ and $t$ (moreover, it is symmetric in $q$ and $t$).  The equivalence between the two formulations is a straightforward consequence of the homogeneity of the ideal $I$.\footnote{This is not completely immediate in the case of reflections.  Indeed, $\SSSS$ isn't a mere rescaling of the variables.  But it becomes one when we consider new variables $X_i+Y_i$ and $X_i-Y_i$.} 

\begin{conj}[$q,t$-dihedral sieving] \label{conj2}
    For any $\FFF$ in $\dih_{n+2}$ and $\sigma\in\mathfrak{S}_n$, we have:
    \[
        (\Lambda\Sigma)(\FFF,\sigma) = \varDelta(\sigma)\Big\vert_{\substack{q=\lambda_1 \\ t=\lambda_2}}
    \]
    where $\lambda_1,\lambda_2$ are the eigenvalues of $\rho_1(\FFF) \in O(\mathbb{R}^2)$.
\end{conj}

This can be considered as a (homological) dihedral sieving phenomenon for cluster parking functions.  The various facets of this conjecture (including the general case beyond type $A$) will be discussed in Section~\ref{sec:dihedralsieving}.  Understanding this character identity leads to very different considerations, depending on wether $\FFF$ is a rotation or a reflection:
\begin{itemize}
    \item When we restrict the action to the rotation subgroup $\rot_{n+2} \subset \dih_{n+2}$, the evaluation of $\Delta$ gives explicit values (this follows from the fact that $\varDelta$ dramatically simplifies at $t = q^{-1}$).
    We get a variant of {\it parking spaces} considered by Armstrong, Reiner, and Rhoades~\cite{armstrongreinerrhoades}, with a cyclic group of order $n+2$ rather than $n$.   We call them {\it cluster parking spaces}.  It is not too difficult to generalize the conjecture to all finite Coxeter groups, and it has been proved in types $A$ and $B$ (see Pouillart~\cite{pouillart2}).
    \item The case of reflections of $\dih_{n+2}$ is connected with~\cite{corteeljosuatvergesvandenwyngaerd}, where we initiate the idea of plugging $t=-1$ in the context of $q,t$-combinatorics by building on the identity $\varDelta(e)\big\vert_{q=-t=1} = E_n$ ($e\in\mathfrak{S}_n$ is the unit and $E_n$ is the $n$th {\it Euler number}. In general, $(\Lambda\Sigma)(\SSS,1_W)$ is the {\it Springer number} of $W$, which is $E_n$ when $W = \mathfrak{S}_n$). This is the part of the conjecture that really relies on $q,t$-symmetric functions, but the lack of a well-developed $q,t$-combinatorics beyond the symmetric group makes it difficult to investigate this beyond symmetric groups.
\end{itemize}

As the main technical contribution of the present work is about automorphisms of $\CPF^{(m)}$ in the case of finite Coxeter groups, we hope that the natural analog of our conjectures (using Gordon's ring~\cite{gordon} in place of diagonal coinvariants, or the corresponding modules of the rational Cherednik algebra) also holds.   One should also investigate {\it higher diagonal coinvariants} defined by Garsia and Haiman~\cite[Section~5]{garsiahaiman} to deal with the general case $m\geq1$.



\subsection{Organization}

We review the generalized cluster complex in Section~\ref{sec:cluster}, its dihedral automorphism group in Section~\ref{sec:dihedralcluster}, and parabolic types of faces in Section~\ref{sec:parface} (in particular, the latter contains the proof of Theorem~\ref{theo:invariant}).  We review (noncrossing) parking functions in Section~\ref{sec:parking}, and cluster parking functions in Section~\ref{sec:symCPF}.  We also define the dihedral action on them in Section~\ref{sec:symCPF}, and the proof of Theorem~\ref{theo:extend} is completed in Section~\ref{sec:unicity}.  Conjectures~\ref{conj1} and~\ref{conj2} are discussed in Section~\ref{sec:dihedralsieving}, with a perspective in Section~\ref{sec:final}.

\subsection{Acknowledgements}

I deeply thank Theo Douvropoulos for sharing his insight throughout this project.  

\section{The generalized cluster complex}
\label{sec:cluster}

Let $W$ be a finite real reflection group of rank $n$ with $n\geq 2$, $V \simeq \mathbb{R}^n$ its geometric representation, and $\Phi$ the corresponding root system.  We choose a set $\Pi$ of simple roots and denote by $\Phi_+$ the corresponding set of positive roots.  Let $T\subset W$ denote the set of reflections.  For $\alpha\in \Phi$, the corresponding reflection is denoted $t_\alpha$.

\subsection{Steinberg's construction}

We fix a partition $\Pi = \Pi_\circ \uplus \Pi_\bullet$, where $\Pi_\circ$ (respectively, $\Pi_\bullet$) contains pairwise orthogonal roots. Let
\[
    c_\bullet = \prod_{\alpha \in \Pi_\bullet} t_{\alpha},\qquad
    c_\circ = \prod_{\alpha \in \Pi_\circ} t_{\alpha}.
\]
The {\it bipartite Coxeter element} is $c = c_\bullet c_\circ$.  The order of $c$ is the {\it Coxeter number}, denoted by $h$.

In a famous paper, Steinberg~\cite{steinberg} describes a recipe to compute all roots in terms of simple roots.  A consequence is that the whole root system can be decomposed as a disjoint union of $2h$ blocks:
\begin{align}
    \Phi = \biguplus_{i=1}^{2h} \Pi_{(i)},
\end{align}
where the blocks satisfy:
\begin{itemize}
    \item $\Phi_+ = \biguplus_{i=1}^{h} \Pi_{(i)}$, $\Pi_\bullet = \Pi_{(1)}$, $\Pi_\circ = \Pi_{(h)}$,
    \item $c_\bullet(\Pi_{(i)}) = \Pi_{(h+2-i)}$, $c_\circ(\Pi_{(i)}) = \Pi_{(h-i)}$, $-\Pi_{(i)} = \Pi_{(i+h)}$ (where indices are taken modulo $2h$).
\end{itemize}
Moreover, this decomposition has a nice interpretation when we project $\Phi$ onto the {\it Coxeter plane}: the roots in each $\Pi_ {(i)}$ have their projection in a common half-line issued from the origin.  (This projection on the Coxeter plane is often used to illustrate high-dimensional root systems.)


\subsection{Brady and Watt's construction}

The set of {\it almost-positive roots} is:
\[
    \Phi_{\geq -1} := \Phi_+ \uplus (-\Pi) = \biguplus_{i=0}^{h+1} \Pi_{(i)}.
\]
(Note that $\Pi_{(0)}=-\Pi_\circ$ and $\Pi_{(h+1)}=-\Pi_\bullet$.)  When $\Phi$ is crystallographic, this set naturally appears in the context of Fomin and Zelevinsky's finite type cluster algebras: $\Phi_{\geq -1}$ indexes {\it cluster variables} and it is the vertex set of the {\it cluster complex}~\cite{fominzelevinsky}.  Accordingly, this set also indexes the facets of a {\it generalized associahedron}.  Brady and Watt~\cite[Section~8]{bradywatt} used Steinberg's construction and minimal factorizations of the Coxeter element to give a purely Coxeter-theoretic derivation of the cluster complex. 

For $f \subset \Phi_{\geq -1}$ and $0\leq i \leq h+1$, let us denote
\[
    f_{(i)} := \prod_{\alpha \in f \cap \Pi_{(i)}} t_{\alpha}
\]
where $t_\alpha$ is the reflection associated with the root $\alpha$.  The factors are pairwise commuting, so that the product is well-defined and $f_{(i)}^2=1_W$.



Noncrossing partitions are well-known objects in Coxeter-Catalan combinatorics, and very useful to deal with the cluster complex.  Let us recall that the {\it reflection length} on $W$ is given by 
\[
    \ell(w) := 
    \min \big\{ 
        k\geq 0\;:\; \exists t_1,\dots,t_k \in T,\; w = t_1\cdots t_k
    \big\},
\]
and the {\it absolute order} on $W$ by
\[
    w_1 \leq w_2 
        \Longleftrightarrow
    \ell( w_1 ) + \ell( w_2 ) = \ell(w_1w_2),
\]
which says that $w_1$ lies in a minimal path from $1_W$ to $w_1w_2$ in the Cayley graph of $W$ (taking $T$ as generating set).  The interval $[1_W,c]$ for the absolute order turns out to be a ranked lattice, called the lattice on {\it noncrossing partitions}.  We denote it by $\NC$. 

Let $\NC_k$ denote the set of rank $k$ noncrossing partitions (the rank is the same as the absolute length).  The cluster complex, denoted by $\Gamma$, is an $(n-1)$-dimensional simplicial complex. Its faces are characterized by the following, which can be taken as a definition:

\begin{prop}[Brady and Watt~\cite{bradywatt}]
    Let $0\leq k \leq n$ and $f$ be a $k$-tuple of elements in $\Phi_{\geq -1}$.  Then $f\in \Gamma$ iff $ f_{(h+1)} \cdots f_{(1)} f_{(0)} \in \NC_k$.
\end{prop}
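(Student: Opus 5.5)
Since the paper says this proposition "can be taken as a definition", what is needed is to connect it to Fomin--Zelevinsky's compatibility-degree definition of $\Gamma$: a set of almost-positive roots is a face iff its elements are pairwise compatible. The plan is to show that $f$ is pairwise compatible iff the ordered product $f_{(h+1)}\cdots f_{(0)}$ has reflection length $k$ and lies below $c$ in absolute order.

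I would proceed in three steps. First, I would handle the case where $f\subseteq\Pi_{(0)}\cup\Pi_{(1)}\cup\dots\cup\Pi_{(h+1)}$ has at most one root, or lies entirely in $-\Pi$. In the latter case the product $f_{(h+1)}f_{(0)}$ is a product of distinct simple reflections, ordered bipartitely. Such a product is a prefix-compatible factor of $c=c_\bullet c_\circ$, so it lies in $\NC_k$. Conversely, negative simple roots are always pairwise compatible, so both sides agree here.

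Second, I would use the rotation $\tau=\tau_\bullet\tau_\circ$ of Fomin--Zelevinsky, which acts on $\Phi_{\geq-1}$ and preserves compatibility. In Steinberg's block labelling it shifts indices: $\Pi_{(i)}$ goes to $\Pi_{(i-2)}$ or $\Pi_{(i+2)}$, with the blocks near $0$ and $h+1$ wrapping via $-\Pi_\bullet,-\Pi_\circ$ through the relations $c_\bullet(\Pi_{(i)})=\Pi_{(h+2-i)}$ and $c_\circ(\Pi_{(i)})=\Pi_{(h-i)}$. I would check that, on the product side, applying $\tau$ to $f$ conjugates $f_{(h+1)}\cdots f_{(0)}$ by $c_\bullet$ or $c_\circ$, up to moving the first or last factor around the cycle. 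This is the Kreweras-type complement map, which is an automorphism of $\NC$ preserving rank. Concretely, if $u=f_{(h+1)}\cdots f_{(0)}\le c$, the recycled product is again below $c$, because $x\le c\Rightarrow c_\bullet x c_\bullet\le c_\bullet c c_\bullet$, combined with the twist relating $c$ to $c_\circ c_\bullet$. Since every $\tau$-orbit meets $-\Pi$, this reduces an arbitrary face to one containing a negative simple root $-\alpha$.

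Third, I would induct on rank by restricting to the parabolic subgroup $W_{\Pi\setminus\{\alpha\}}$. Faces containing $-\alpha$ are exactly $\{-\alpha\}$ together with faces of the cluster complex of the standard parabolic, since a root is compatible with $-\alpha$ iff its $\alpha$-coordinate vanishes. On the product side, the factor $t_\alpha$ sits at an end, and $t_\alpha u\le c$ iff $u\le t_\alpha c$, where $t_\alpha c$ is a (bipartite-type) Coxeter element of the parabolic. Steinberg blocks of the parabolic embed compatibly into those of $W$ after relabelling, which allows the induction to close.

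The main obstacle is the second step. The bookkeeping of how the ordered product transforms under $\tau$ is delicate, especially when roots move across the boundary blocks $\Pi_{(0)},\Pi_{(h+1)}$. Proving that the length condition is preserved, not just membership below $c$, requires care. I would first verify it in the rank-two case ($I_2(h)$), where everything is explicit, and then argue blockwise using the commuting factors. Realistically this is Brady--Watt's theorem, and I would cite~\cite[Section~8]{bradywatt} for the details.
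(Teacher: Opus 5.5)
The paper does not prove this proposition. It quotes Brady--Watt and then uses the statement as the definition of $\Gamma$. So your outline is an independent derivation from Fomin--Zelevinsky's compatibility definition. Its architecture (rotation invariance, identification of the link of a negative simple root with the parabolic complex, induction on rank) is sound, but the effort is misallocated.

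Step~2 is not the obstacle. It is also not a Kreweras complement, which is order-reversing and sends rank $k$ to rank $n-k$. For $m=1$, Fomin--Zelevinsky's $\tau_\bullet\tau_\circ$ is exactly the map $\RR$, and the computation of Section~\ref{sec:invaR} gives $\prod\RR(f)=cBc^{-1}A$, where $A=f_{(h+1)}f_{(h)}$ and $B=f_{(h-1)}\cdots f_{(0)}$. If $\prod f=AB\in\NC_k$, write $c=ABK$ with lengths adding up to $n$. Then $c=(cBc^{-1})\,A\,(BKB^{-1})$ is again length-additive, so $\prod\RR(f)\in\NC_k$. Since $\RR$ has finite order, the condition is $\RR$-invariant in both directions, with no rank-two check or blockwise bookkeeping needed.

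The genuine gap is in Step~3, where you assert in one sentence that the Steinberg blocks of $W_J$ ``embed compatibly'' into those of $W$. This is exactly what you need to identify $\prod(f\setminus\{-\alpha\})$, computed with the blocks of $W$, with the product defined intrinsically from $W_J$ and its bipartite Coxeter element $c\,t_\alpha$ (or $t_\alpha c$). Without it the induction hypothesis does not apply. The two block orders come from different Coxeter planes, and $W_J$ has a different Coxeter number. So you must actually prove that any two non-orthogonal roots of $(\Phi_J)_{\geq -1}$ occur in the same relative order in both. Then the two orders differ only by swaps of commuting reflections, and the products agree.

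This lemma is true, but it carries real content. One route is through Reading--Speyer's description of the $c$-sorting order on $\Phi_+$ via the form $\omega_c$, whose restriction to the span of $J$ is $\omega_{c_J}$. Another, in crystallographic types, is through the Auslander--Reiten quiver.

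Two smaller points remain. First, $W_J$ is often reducible, with components of different Coxeter numbers, so the inductive statement must be set up for products, with both sides factoring componentwise. Second, for the direction $\prod f\in\NC_k\Rightarrow f\in\Gamma$, you need to observe that $u\le c_J$ forces $u\in W_J$. Hence every reflection in its reduced factorization lies in $W_J$, i.e.\ $f\setminus\{-\alpha\}\subset(\Phi_J)_{\geq -1}$, and only then can Fomin--Zelevinsky's restriction property be invoked.
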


In particular, facets of $\Gamma$ are the $n$-tuples $f$ in $\Phi_{\geq -1}$ such that $ f_{(h+1)} \cdots f_{(1)} f_{(0)} = c$.  Note also that Brady and Watt considered a {\it total} order on $\Phi_{\geq -1}$.  This is equivalent to the present formulation upon choosing arbitrary total orders on $\Pi_\circ$ and on $\Pi_\bullet$, which turns out to be irrelevant since each of these set contains pairwise orthogonal elements (and the associated reflections commute).

\subsection{The Fuß extension}

We fix an integer $m\geq 1$ (it's called a {\it Fuß parameter}, because the analog of Catalan numbers counting facets of $\Gamma$ are called the {\it Fuß-Catalan numbers}).  A {\it colored root} is a pair $(\alpha,i)$ (denoted by $\alpha^i$ for brevity) with $\alpha\in \Phi$ and $1\leq i \leq m$.  A colored root $\alpha^i$ is {\it almost positive} if either $\alpha \in \Phi_+$, or $i=1$ and $-\alpha\in\Pi$. The set of {\it almost positive colored roots} is denoted by $\Phi^{(m)}_{\geq -1}$.  This is the vertex set of the {\it generalized cluster complex} $\Gamma^{(m)}$ introduced by Fomin and Reading~\cite{fominreading}.  

Tzanaki~\cite{tzanaki} extended Brady and Watt's result to this setting.  Let us describe the block decomposition of $\Phi^{(m)}_{\geq -1}$.  Denote $\Pi_\circ^i := \{ \alpha^i \;:\; \alpha\in\Pi_\circ \}$, and similarly for $\Pi_\bullet$, $-\Pi_\circ$, $-\Pi_\bullet$, $\Pi_{(j)}$ in place of $\Pi_\circ$.  We have 
\begin{equation}\label{eq:phi_decomp}
    \Phi^{(m)}_{\geq -1}
    =
    \biguplus_{i=0}^{mh+1}
    \varPi_{(i)},
\end{equation}
where 
\begin{itemize}
    \item $\varPi_{(0)} := (-\Pi_\circ)^1$,  $\varPi_{(mh+1)} := (-\Pi_\bullet)^1$,
    \item $\varPi_{(ih+j)} := \Pi_{(j)}^{m-i}$ if $0\leq i \leq m-1$ and $1\leq j \leq h$,
\end{itemize}
i.e., the whole sequence is given by
\begin{equation}
    \begin{aligned}
(\varPi_{(0)},\dots,\varPi_{(mh+1)})
    = 
    (-\Pi_\circ^1, \Pi_\bullet^m, \dots, \Pi_\circ^{m}, \Pi_\bullet^{m-1}, \dots, \Pi_\circ^{m-1}, \dots, \\ \Pi_\bullet^2, \dots, \Pi_\circ^2, 
    \Pi_\bullet^1, \dots, \Pi_\circ^1, -\Pi_\bullet^1)
    \end{aligned}
\end{equation}
It will always be understood that indices are taken modulo $mh+2$.

For each $f\subset \Phi^{(m)}_{\geq -1}$ and $0\leq i \leq mh-1$, we define
\[ 
    f_{(i)} := \prod_{\alpha \in f \cap \varPi_{(i)} } t_\alpha.
\]
(Of course, the reflection associated with a colored root is obtained by forgetting its color.)  The factors are again pairwise commuting, so that $f_{(i)}^2 = e$. 

Fomin and Reading defined $\Gamma^{(m)}$ as the flag complex associated to a binary relation.  Tzanaki's reformulation is what we take as a definition.

\begin{prop}[Tzanaki~\cite{tzanaki}] \label{prop:tzanaki}
Let $f$ be a $k$-tuple of elements in $\Phi^{(m)}_{\geq -1}$.  Then $f\in \Gamma^{(m)}$ iff $f_{(mh+1)} \cdots f_{(0)} \in \NC_k$.  In particular, an $n$-tuple $f$ is a facet of $\Gamma^{(m)}$ iff $f_{(mh+1)} \cdots f_{(0)} = c$.
\end{prop}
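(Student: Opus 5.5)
The plan is to start from Fomin and Reading's original definition of $\Gamma^{(m)}$ and prove Proposition~\ref{prop:tzanaki} by induction on the rank $n$. In that definition, $\Gamma^{(m)}$ is the flag complex of a compatibility relation on $\Phi^{(m)}_{\geq -1}$. The relation is characterized by two properties: it is invariant under the rotation $\RR$ (of order $mh+2$, or a divisor of it), and a negative simple colored root $(-\alpha)^1$ is compatible with $\beta^i$ iff $\beta^i$ lies in the generalized cluster complex of the standard parabolic subgroup $W_{\Pi\setminus\{\alpha\}}$. The aim is to show that the set of $k$-tuples $f$ with $f_{(mh+1)}\cdots f_{(0)}\in\NC_k$ has the same two features, namely invariance under $\RR$ and the same link of a negative simple root. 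An induction then identifies the two complexes.

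\textbf{Step 1 (rotation is a block shift).} In the block decomposition \eqref{eq:phi_decomp}, the rotation $\RR$ sends $\varPi_{(i)}$ to $\varPi_{(i+1)}$, with indices modulo $mh+2$. On most blocks it acts by $c_\bullet$ or $c_\circ$, in accordance with Steinberg's relations $c_\bullet(\Pi_{(i)})=\Pi_{(h+2-i)}$ and $c_\circ(\Pi_{(i)})=\Pi_{(h-i)}$, and it changes the color at the passage $\Pi_\circ^{j}\to\Pi_\bullet^{j-1}$. At the two ends of the sequence, the negative simple roots $(-\Pi_\circ)^1$ and $(-\Pi_\bullet)^1$ are cycled around. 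This is essentially the definition of $\RR$ given in Section~\ref{sec:dihedralcluster}, rewritten blockwise.

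\textbf{Step 2 (the condition is invariant under the shift).} Write $w=f_{(mh+1)}\cdots f_{(0)}$. After the shift, the new product is obtained from $w$ in two moves. The last factor $f_{(mh+1)}$ is moved to the front (or the first to the back), which is conjugation by an involution $x=f_{(mh+1)}\le c_\bullet$. All the other factors are conjugated by $c_\bullet$ or $c_\circ$. The resulting element is a Kreweras-type image of $w$, of the form $c_\bullet\,(x w x)\,c_\bullet$ or a variant. I would prove that this map preserves $\NC_k$ using two facts. First, conjugation by $c$ is an automorphism of $[1_W,c]$. Second, $y\le c$ implies $y^{-1}c\le c$ and $c y^{-1}\le c$. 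Together with $c=c_\bullet c_\circ$ with $c_\bullet,c_\circ$ involutions, these give the required statement.

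I expect this to be the main obstacle. The bookkeeping at the wrap-around blocks must be done exactly, since that is where colors change and where a negative simple root becomes positive. The fact that the reflection length of the new product stays equal to $k$ also has to be checked there.

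\textbf{Step 3 (link of a negative simple root).} Suppose $f$ contains $(-\alpha)^1$ with, say, $\alpha\in\Pi_\circ$, so that $t_\alpha$ is a factor of $f_{(0)}$. Then $w\le c$ with $t_\alpha$ as rightmost factor is equivalent to $w t_\alpha \le c t_\alpha$. Here $c t_\alpha$ is a bipartite Coxeter element of $W_{\Pi\setminus\{\alpha\}}$, and the interval $[1_W,ct_\alpha]$ is the noncrossing lattice of that parabolic subgroup. The restricted Steinberg blocks of the parabolic are the intersections of the original blocks with the parabolic root subsystem, so the remaining factors of $f$ satisfy the parabolic version of the condition. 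By the induction hypothesis, this is exactly the link condition in Fomin and Reading's complex.

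\textbf{Step 4 (conclusion).} Every $\RR$-orbit of vertices meets $(-\Pi)^1$, so by Steps 2 and 3 a nonempty $k$-tuple can be rotated to one containing a negative simple root. Both conditions are preserved by this rotation, and they agree on such tuples by Step 3. The statement about facets is the case $k=n$, since $\NC_n=\{c\}$. Flagness of the product condition need not be proved separately, because the equivalence transfers it from Fomin and Reading's complex.
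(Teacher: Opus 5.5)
Note first that the paper gives no proof of this statement: it takes Tzanaki's characterization as the definition of $\Gamma^{(m)}$. So your argument has to stand on its own. The strategy ($\RR$-invariance, the link of a negative simple root, induction on rank) is sensible, but there are two concrete problems.

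\textbf{Step~1 is false, which invalidates Step~2 as written.} $\RR$ does not shift blocks by one. By \eqref{eq:rr_delta}, $\RR(\varPi_{(i)})=\varPi_{(i-h)}$. Every positive colored root of color $<m$ simply gains one unit of color. The $h$ blocks $\varPi_{(0)},\dots,\varPi_{(h-1)}$ wrap around and are conjugated by $c$. Conjugation by $c_\bullet$ or $c_\circ$ alone reverses the block order; that is what the reflections $\SS$ and $\SS\RR$ do. A one-block shift cannot exist in general, because consecutive blocks can have different sizes: in type $A_3$ with $\Pi_\circ=\{\alpha_2\}$ (the middle node) and $m=1$, the sizes are $1,2,1,2,1,2$. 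So the element $c_\bullet(xwx)c_\bullet$ in Step~2 corresponds to no actual map.

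Once Step~1 is corrected, Step~2 is short rather than the main obstacle. By \eqref{eq:f'RR}, $\prod\RR(f)=cBc^{-1}A$, where $A=f_{(mh+1)}\cdots f_{(h)}$, $B=f_{(h-1)}\cdots f_{(0)}$ and $\prod f=AB$. If $c=ABK$ is length-additive, then so is $c=(cBc^{-1})\,A\,(A^{-1}cKc^{-1}A)$. Hence $\prod\RR(f)\in\NC_k$, and since $\RR$ permutes $k$-subsets, the condition is $\RR$-invariant.

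\textbf{Step~3 rests on a false claim, and this is where the real gap lies.} The Steinberg blocks of $W'=W_{\Pi\setminus\{\alpha\}}$ are not the intersections of those of $W$ with $\Phi'$. Take type $A_4$ with $\Pi_\bullet=\{\alpha_1,\alpha_3\}$ and $\Pi_\circ=\{\alpha_2,\alpha_4\}$. The blocks are $\{\alpha_1,\alpha_3\}$, $\{\alpha_1+\alpha_2+\alpha_3,\,\alpha_3+\alpha_4\}$, $\{\alpha_2+\alpha_3,\,\alpha_1+\alpha_2+\alpha_3+\alpha_4\}$, $\{\alpha_1+\alpha_2,\,\alpha_2+\alpha_3+\alpha_4\}$, $\{\alpha_2,\alpha_4\}$. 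After removing $\alpha_4$, the $A_3$-block $\{\alpha_1+\alpha_2,\,\alpha_2+\alpha_3\}$ meets two different $W$-blocks. Also $h'=4<5=h$, so no index-wise matching is possible.

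What Step~3 actually needs is weaker: on colored roots of $\Phi'$, the $W$-order on $\Phi^{(m)}_{\geq-1}$ should be compatible with the $W'$-order of blocks. Concretely, it may only separate roots lying in a common $W'$-block, which are pairwise orthogonal, so the two ordered products coincide. This holds in the example above. But it is a nontrivial lemma about the bipartite reflection order, for instance about how the inversion order of the reduced word $c_\bullet c_\circ c_\bullet\cdots$ of $w_0$ restricts to a parabolic subgroup. It is the real content of the proof, and your proposal does not supply it.

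\textbf{Two further points.} On Fomin and Reading's side, you need that the link of $(-\alpha)^1$ in $\Gamma^{(m)}(W)$ is exactly $\Gamma^{(m)}(W')$. That is, compatibility between two colored roots of $\Phi'$ must be the same whether computed in $W$ or in $W'$. This does not follow from the two characterizing properties you list, which only say which vertices are compatible with $(-\alpha)^1$ itself; it has to be invoked as a separate result of Fomin and Reading. Also, $W'$ may be reducible, so the induction hypothesis must be stated for reducible groups, with the orders taken componentwise.
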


There is thus a noncrossing partition associated to each face of $\Gamma^{(m)}$.

\begin{defi}
    For each $f\in \Gamma^{(m)}$, we define
    \[
        \textstyle\prod f := f_{(mh+1)} \cdots f_{(0)} \in \NC.
    \]
\end{defi}

This notation is justified by the fact that $\prod f$ is the product of reflections $t_\alpha$ for $\alpha \in f$ (since each $t_\alpha$ appear in exactly one of the factors $f_{(i)}$.

\begin{rema}
    It is well-known that there is a bijection between clusters $f\in\Gamma=\Gamma^{(1)}$ and noncrossing partitions, both counted by Catalan numbers.  The map $\Gamma^{(m)} \to \NC$ that we just defined has a different nature: it's defined on all faces (not just clusters) and it's a projection rather than a bijection.  
\end{rema}

\section{Dihedral symmetry of the generalized cluster complex}
\label{sec:dihedralcluster}

We review the dihedral automorphism group of the generalized cluster complex, as described in~\cite{josuatverges}.  This group is generated by two elements $\RR, \SS$ such that $\RR^{mh+2} = \SS^2 = (\RR\SS)^2 = \II$ (the identity of $\Aut(\Gamma^{(m)})$).  Note also that the dihedral automorphism group might not be the full automorphism group $\Aut(\Gamma^{(m)})$ (because there might exist diagram automorphisms that are not in $\langle \RR ,\SS \rangle$, see~\cite{josuatverges} for details).

\begin{defi}[Fomin and Reading~\cite{fominreading}]
    The self-map $\RR :\Phi^{(m)}_{\geq -1} \to \Phi^{(m)}_{\geq -1} $ is defined by
    \begin{align} \label{eq:defR}
        \RR(\alpha^i) :=
        \begin{cases}
            \alpha^{i+1} & \text{ if } \alpha\in\Phi_+ \text{ and } i<m,\\
            (-\alpha)^1 & \text{ if } \alpha \in \Pi_\circ \text{ and } i=m, \text{ or } -\alpha \in \Pi_\bullet \text{ and } i=1,\\
            c(\alpha)^1 & \text{ if } \alpha \in \Phi_+ \backslash \Pi_\circ \text{ and } i=m, \text{ or } -\alpha \in \Pi_\circ \text{ and } i=1. 
        \end{cases}
    \end{align}
\end{defi}

The map $\RR$ was used in~\cite{fominreading} to define $\Gamma^{(m)}$.  By design, we have $\RR \in \Aut(\Gamma^{(m)})$.  Note that the map $\RR$ induces a simple action on the block decomposition~\eqref{eq:phi_decomp}:
\begin{align} \label{eq:rr_delta}
    \RR( \varPi_{(i)} ) = \varPi_{(i-h)},
\end{align}
where indices are taken modulo $mh+2$.

\begin{defi}[\cite{josuatverges}]
    The self-map $\SS$ on $\Phi^{(m)}_{\geq -1}$ is defined by:
    \begin{align} \label{eq:defS}
        \SS( \alpha^i )
        =
        \begin{cases}
            \alpha^i & \text{ if } \alpha \in -\Pi_\circ \text{ and } i=1 \text{ (i)}, \\
            (-\alpha)^m & \text{ if } \alpha \in -\Pi_\bullet \text{ and } i=1 \text{ (ii)}, \\
            (-\alpha)^1 & \text{ if } \alpha\in \Pi_\bullet \text{ and } i=m \text{ (ii')},\\
            \alpha^{m-i} & \text{ if } \alpha\in \Pi_\bullet \text{ and } 1\leq i \leq m-1 \text{ (iii)},\\
            c_\bullet(\alpha)^{m+1-i} & \text{ if } \alpha \in \Phi_+ \backslash \Pi_\bullet \text{ (iv)}.
        \end{cases}
    \end{align}
\end{defi}

It is also proved in~\cite{josuatverges} that $\SS\in\Aut(\Gamma^{(m)})$. Note that 
\begin{align} \label{eq:ss_delta}
    \SS( \varPi_{(i)} ) = \varPi_{(-i)}.
\end{align}

The composition $\SS\RR \in \Aut(\Gamma^{(m)})$ can be made explicit:
\begin{align} \label{eq:SSRR}
    \mathcal{SR}(\alpha^i) = 
    \begin{cases}
        \alpha^{m-i-1}  & \text{ if } \alpha \in \Pi_\bullet \text{ and } 1 \leq i \leq m-2 \;(i), \\
        c_\bullet(\alpha)^{m-i}  & \text{ if } \alpha \in \Phi_+ \backslash \Pi_\bullet \text{ and } 1 \leq i \leq m-1 \;(ii), \\
        (-\alpha)^1  & \text{ if } \alpha \in \Pi_\circ \text{ and } i = m \;(iii), \\
        (-\alpha)^m  & \text{ if } \alpha \in - \Pi_\circ \text{ and } i = 1 \;(iii'), \\
        (-\alpha)^{m-1}  & \text{ if } \alpha \in -\Pi_\bullet \text{ and } i = 1 \;(iv), \\
        (-\alpha)^1  & \text{ if } \alpha \in \Pi_\bullet \text{ and } i = m-1 \;(iv'), \\
        c_\circ (\alpha)^m & \text{ if } \alpha \in \Phi_+ \backslash \Pi_\circ \text{ and } i=m \;(v). 
    \end{cases}
\end{align}
By checking all cases, it is straightforward to check that $\SS\RR$ is an involution.  By~\eqref{eq:rr_delta} and~\eqref{eq:ss_delta}, we have:
\begin{equation} \label{eq:DeltaSSRR}
    \SS\RR( \varPi_{(i)}) = \varPi_{(h-i)}.
\end{equation}

\section{Parabolic types for the cluster complex}
\label{sec:parface}


By {\it parabolic type}, we mean an equivalence class of parabolic subgroup modulo conjugation.  For example, a parabolic subgroup of the symmetric group $\mathfrak{S}_n$ is isomorphic to a product $\mathfrak{S}_{\lambda_1}\times \mathfrak{S}_{\lambda_2} \times \dots$ where $\lambda$ is a partition of $n$.  This gives a correspondence between parabolic types and partitions of $n$.  One can also see a parabolic type as certain conjugacy classes in $W$:  a parabolic subgroup is associated to the conjugacy class of its Coxeter elements.

The parabolic type of $\pi\in\NC$ is the conjugacy class of $\pi$, or in terms of subgroups:
\[
    W_\pi := \{ w\in\ W \;:\; \Fix(\pi) \subset \Fix(w) \},
\]
where $\Fix(w) = \ker(1_W-w) \subset V$. (By Steinberg's theorem, $W_\pi$ is indeed parabolic.)  Enumeration of noncrossing partitions refined by parabolic type is an important problem related to {\it parking functions} (see~\cite{armstrongreinerrhoades} and next section).  In the case of $\mathfrak{S}_n$, the parabolic type of $\pi\in \NC$ corresponds to the partition of $n$ obtained by the cycle decomposition of $\pi$. (Or, seeing noncrossing partitions as certain set partitions, the parabolic type is obtained by taking the blocks sizes in decreasing order.)

Along the same lines, in~\cite{douvropoulosjosuatverges1,douvropoulosjosuatverges2} we associate a parabolic type to each face $f\in \Gamma^{(m)}$.  The definition involves the {\it Kreweras complement}, a lattice anti-automorphism $K:\NC\to\NC$ defined by
\[
    K(\pi) := \pi^{-1} c. 
\]

\begin{defi}[\cite{douvropoulosjosuatverges1}]
    For $f\in \Pi^{(m)}$, define 
    \begin{align}
        \underline{f} := K \big( \textstyle\prod f \big) = \big( \textstyle\prod f \big)^{-1} c \in \NC.
    \end{align}
    The {\it parabolic type} of $f$ is defined as the conjugacy class of the parabolic subgroup $W_{\underline{f}}$.
\end{defi}

(A slightly different definition of $K$ and $\underline{f}$ was used in~\cite{douvropoulosjosuatverges1}, but both lead to the same conjugacy class.  Also, recall that in the case of the symmetric group, this definition was illustrated in the introduction.) 

Note that $\underline{f}$ is a Coxeter element of the subgroup $W_{\underline{f}}$.  So, we can see the parabolic type of $f$ as the conjugacy class of $\underline{f}$, if we prefer to think of it in terms of a conjugacy class inside $W$.  In our previous work~\cite{douvropoulosjosuatverges1,douvropoulosjosuatverges2}, we showed that the parabolic type is an important invariant of faces $f \in \Gamma^{(m)}$.  A prominent result is an exact enumeration formula for faces refined by their parabolic type~\cite[Corollary~7.3]{douvropoulosjosuatverges1}.  It was also proved in~\cite[Proposition~11.1]{douvropoulosjosuatverges1} that the parabolic type is invariant under the action of $\RR$.  We review this here and complete the picture by proving Theorem~\ref{theo:invariant}.

\subsection{\texorpdfstring{The map $\RR$}{The map R}}
\label{sec:invaR}

We revisit the proof of \cite[Proposition~11.1]{douvropoulosjosuatverges1}. This is necessary to have an explicit conjugation equation between $\underline{f}$ and $\underline{f'}$, where $ f' := \RR(f) $. 
Using~\eqref{eq:rr_delta}, we have:
\[
    f'_{(i)} = \prod_{ \alpha \in \RR(f) \cap \varPi_{(i)}} t_\alpha = \prod_{ \alpha \in f \cap \varPi_{(i+h)}} t_{\RR(\alpha)}.
\]
From the definition of $\RR$ in~\eqref{eq:defR} (and $t_{w(\alpha)} = w t_\alpha w^{-1}$ for $\alpha\in\Phi_+$, $w\in W$), we have:
\[
    t_{\RR(\alpha)}
    =
    \begin{cases}
        t_\alpha & \text{ if } \alpha\in \varPi_{(i)} \text{ with } h \leq i \leq mh+1, \\
        c \cdot t_\alpha \cdot c^{-1} & \text{ if } \alpha\in \varPi_{(i)} \text{ with } 0 \leq i \leq h-1.
    \end{cases}
\]
From the previous two equations, we get
\begin{align}
    \label{eq:f'RR}
    f'_{(i)} = 
    \begin{cases}
        f_{(i+h)} & \text{ if } 0 \leq i \leq (m-1)h+1,\\
        c\cdot f_{(i+h)} \cdot c^{-1} & \text{ if } (m-1)h+2 \leq i \leq mh+1.
    \end{cases} 
\end{align}
So, 
\begin{align}
    \textstyle\prod f'  &= f'_{(mh+1)} \cdots f'_{(0)} \\
                    &=  c\cdot (f_{(h-1)}\cdots f_{(0)} ) \cdot c^{-1} \cdot f_{(mh+1)} \cdots f_{(h)}.
\end{align}
and it follows
\begin{align}
    \underline{f'}  
    &= 
    (f_{(h)} \cdots f_{(mh+1)}) \cdot c \cdot (f_{(0)} \cdots f_{(h-1)}) \\
    &=  
    (f_{(h-1)}\cdots f_{(0)}) \cdot \underline{f} \cdot (f_{(0)}\cdots f_{(h-1)}).
\end{align}
This shows in particular that $\underline{f'}$ and $\underline{f}$ are conjugate.

\subsection{\texorpdfstring{The map $\SS$}{The map S}}
\label{sec:invaS}

As for $\SS$, the computations are similar.  Let $f' := \SS(f)$.  From~\eqref{eq:ss_delta}, we have:
\[
    f'_{(i)} 
    =
    \prod_{ \alpha \in \SS(f) \cap \varPi_{(i)}} t_\alpha = \prod_{ \alpha \in f \cap \varPi_{(-i)}} t_{\SS(\alpha)}.
\]
From the definition of $\SS$ in~\eqref{eq:defS}, we have:
\begin{align} \label{eq:f'S}
    f'_{(i)} =
    \begin{cases}
        f_{(i)} & \text{ if } i=0, \\
        c_\bullet \cdot f_{(-i)} \cdot c_\bullet & \text{ otherwise. }
    \end{cases}
\end{align}
From the previous two equations, we get:
\begin{align}
    \textstyle\prod f' 
    &=
    c_\bullet \cdot (f_{(1)} \cdots f_{(mh+1)}) \cdot c_\bullet \cdot f_{(0)} \\
    &=
    c_\bullet \cdot f_{(0)} \cdot \big( \prod f \big)^{-1} \cdot c_\bullet \cdot f_{(0)}
\end{align}
so that
\begin{align}
    \underline{f'}
    &=
    f_{(0)} \cdot c_\bullet \cdot \big( \prod f \big) \cdot f_{(0)} \cdot c_\circ \\
    &=
    f_{(0)} \cdot c_\bullet \cdot  c \cdot \underline{f}^{-1} \cdot f_{(0)} \cdot c_\circ \\
    &= f_{(0)} \cdot c_\circ \cdot \underline{f}^{-1} \cdot f_{(0)} \cdot c_\circ.
\end{align}
Note that 
\[
    f_{(0)} \cdot c_\circ
    =
    \prod_{\alpha \in \Pi_\circ - f_{(0)}} t_\alpha,
\]
so $f_{(0)} \cdot c_\circ$ is an involution as a product of pairwise commuting reflections.  Moreover, each element of $W$ is conjugate to its inverse (see~\cite[Corollary~3.2.14]{geckpfeiffer}).  So, the previous equation shows that $\underline{f'}$ and $\underline{f}$ are conjugate.

\subsection{\texorpdfstring{The map $\SS\RR$}{The map SR}}

Although the previous two sections prove Theorem~\ref{theo:invariant}, it is useful to make the explicit computations regarding the composition $\SS\RR$.  The goal is again to have an explicit conjugation equation between $\underline{f}$ and $\underline{f'}$, where $f' := \SS\RR(f)$.  By~\eqref{eq:DeltaSSRR} we have:
\[
    f'_{(i)} 
    =
    \prod_{ \alpha \in \SS\RR(f) \cap \varPi_{(i)}} t_\alpha = \prod_{ \alpha \in f \cap \varPi_{(h-i)}} t_{\SS\RR(\alpha)}.
\]
Now, using the various cases in~\eqref{eq:SSRR} we get
\begin{align} \label{eq:SR}
    f'_{(i)} =
    \begin{cases}
        c_\bullet \cdot f_{(h-i)} \cdot c_\bullet & \text{ if } h+1 \leq h-i \leq mh+1, \\
        c_\circ \cdot f_{(h-i)} \cdot c_\circ & \text{ if } 0 \leq h-i \leq h.
    \end{cases}
\end{align}
And it follows
\begin{align}
    \prod f' 
    &= c_\bullet \cdot ( f_{(h+1)} \cdots f_{(mh+1)} ) \cdot c_\bullet \cdot c_\circ \cdot ( f_{(0)} \cdots f_{(h)}) \cdot c_\circ
\end{align}
and
\begin{align}
    \underline{f'} &=
    c_\circ \cdot ( f_{(h)} \cdots f_{(0)}) \cdot c_\circ \cdot c_\bullet \cdot ( f_{(mh+1)} \cdots f_{(h+1)} ) \cdot c_\circ \\
    &= c_\circ \cdot ( f_{(h)} \cdots f_{(0)}) \cdot c_\circ \cdot c_\bullet \cdot (\prod f) \cdot ( f_{(0)} \cdots f_{(h)} ) \cdot c_\circ \\
    &= c_\circ \cdot ( f_{(h)} \cdots f_{(0)}) \cdot \underline{f}^{-1} \cdot ( f_{(0)} \cdots f_{(h)} ) \cdot c_\circ.
\end{align}
As in the previous case, we deduce that $\underline{f}'$ and $\underline{f}$ are conjugate.

\begin{proof}[Proof of Theorem~\ref{theo:invariant}]
    We have seen in Sections~\ref{sec:invaR} and~\ref{sec:invaS} that the parabolic type of a face $f\in\Gamma^{(m)}$ is invariant under the action of $\RR$ and $\SS$.  So it is invariant under the group they generate.
\end{proof}

\section{Parking functions}
\label{sec:parking}

Let us review {\it noncrossing parking functions}, following Armstrong, Reiner and Rhoades~\cite{armstrongreinerrhoades}.  Noncrossing partitions have been briefly introduced in Section~\ref{sec:cluster}, and their parabolic types in Section~\ref{sec:parface}.  The problem of counting noncrossing partitions refined by their parabolic types naturally leads to {\it noncrossing parking functions}.  It should be emphasized that the theory in~\cite{armstrongreinerrhoades} features different kind of parking spaces, including a mysterious {\it algebraic parking space} which conjecturally unifies the theory.  In the present work, we mostly consider the one based on noncrossing partitions and allow ourselves to call parking functions what should be called a noncrossing parking function.

\begin{defi}
    A {\it parking function} is a coset $wW_\pi$ where $w\in W$ and $\pi\in \NC$.  Let $\PF$ denote the poset of parking functions, where the order is reverse inclusion.
\end{defi}

The Coxeter element $c$ generates a cyclic subgroup $\langle c \rangle \subset W$ of order $h$.  As the reflection length and absolute order on $W$ are invariant under the action of this cyclic group, there is an action on $\NC$ by conjugation:
\[
    c^i \cdot \pi = c^{i}\pi c^{-i}.
\]
This action carries over to an action on $\PF$ that commutes with the action of $W$.  Namely, the action of $\langle c \rangle \times W$ is:
\[
    (c^i, w_1) \cdot w_2W_{\pi} = w_1 w_2 W_{\pi}c^{-i}
\]
where $w_1, w_2 \in W$, $\pi\in \NC$.  This element is also a parking function because it can be written:
\[
    w_1 w_2 W_{\pi}c^{-i} = w_1 w_2 c^{-i} (c^{i} W_{\pi}c^{-i}) = w_1 w_2 c^i W_{ c^{i} \pi c^{-i}}, 
\]
and $\pi\in\NC$ implies $c^{-i} \pi c^i \in \NC$ as we just saw.

Let $\mu = e^{ 2i\pi/ h}$ be a primitive $h$th root of $1$.  It was conjectured in~\cite{armstrongreinerrhoades} that the character of $\PF$ is:
\[
    (c^k, w ) \mapsto \lim_{q \to \mu^k} \frac{\det(1_W - q^{h+1} w)}{\det(1_W -q w) }
    =
    (h+1)^{\dim\ker(1_W - \mu^k  w)}.
\]
Moreover, this has been checked in most cases ({\it ibidem}).  To our knowledge, it is still open only in types $E_7$ and $E_8$.




If we restrict the action to $W$, the orbit of $W_\pi$ (for $\pi\in\NC$) is the quotient $W/W_\pi$ seen as a $W$-set.  Computing the character of $\PF$ thus amounts to counting noncrossing partitions according to their parabolic type.  The relationship between this counting problem and the generalized cluster complex is discussed in~\cite{douvropoulosjosuatverges1}.

\section{Dihedral symmetry on CPF}
\label{sec:symCPF}

Cluster parking functions are the pairs $(f,wW_{\underline{f}}) \subset \Gamma^{(m)} \times \PF$, and the product order endows them with a partial order which can be interpreted as the inclusion of faces of a simplicial complex denoted $\CPF^{(m)}$.  The faces of a simplex $(f,wW_{\underline{f}}) \in \CPF^{(m)}$ are the elements $(f',wW_{\underline{f'}})$ where $f' \subset f$ (in particular, they are naturally in correspondence with faces of $f$).  We refer to $\cite{douvropoulosjosuatverges2}$ for details.  

Let us give a concrete combinatorial definition in type $A$. Let $\Gamma_n$ (respectively, $\CPF_n$) denote the complex $\Gamma^{(m)}$ (respectively, $\CPF^{(m)}$) in type $A_{n-1}$ with $m=1$.  A face $f\in \Gamma_n$ is a dissection of the (regular) $(n+2)$-gon, {\it i.e.}, a set of pairwise noncrossing diagonals.  For example, 
\[
    \begin{tikzpicture}
        \draw[very thick] (0.38,-0.92) -- (-0.38,-0.92) -- (-0.92,-0.38) -- (-0.92,0.38) -- (-0.38,0.92) -- (0.38,0.92) -- (0.92,0.38) -- (0.92,-0.38) -- (0.38,-0.92);
        \draw[very thick] (-0.38,0.92) -- (0.38,-0.92);
        \draw[very thick] (-0.38,0.92) -- (0.92,-0.38); 
    \end{tikzpicture} \in \Gamma_6.
\]
Moreover, $\langle \RR,\SS\rangle$ identifies to the symmetries of the polygon.  A face of $\CPF_n$ is obtained by putting labels $1,\dots,n$ in the inner polygons defined by a dissection, with the rule that a $(k+2)$-gon receives a set of $k$ (unordered) labels.  For example, 
\[
    \begin{tikzpicture}
        \draw[very thick] (0.38,-0.92) -- (-0.38,-0.92) -- (-0.92,-0.38) -- (-0.92,0.38) -- (-0.38,0.92) -- (0.38,0.92) -- (0.92,0.38) -- (0.92,-0.38) -- (0.38,-0.92);
        \draw[very thick] (-0.38,0.92) -- (0.38,-0.92);
        \draw[very thick] (-0.38,0.92) -- (0.92,-0.38); 
        \node at (-0.4,-0.2) {\small 236};
        \node at (0.45,-0.35)  {\small 5};
        \node at (0.5,0.4)  {\small 14};
    \end{tikzpicture} \in \CPF_6.
\]
The symmetric group acts by natural relabeling.  Let us illustrate Theorem~\ref{theo:extend} before proving it: as a consequence of the uniqueness of $\RRR,\SSS$, the dihedral group $\langle \RRR,\SSS\rangle$ will again naturally identify to the group of symmetries of the regular $(n+2)$-gon (where diagonals and labels follow the movement of a symmetry of the polygon).  See also~\cite{douvropoulosjosuatverges2} for combinatorial models in types $B$ and $I$. 

Let's go back to the general case.  For each $f\in \Gamma^{(m)}$, let us define:
\begin{align} 
    \label{eq:defkappa}
    \kappa_f & := f_{(0)} \cdots f_{(h-1)},  \\
    \label{eq:deflambda}
    \lambda_f & := f_{(0)} \cdot c_\circ, \\
    \label{eq:defmu}
    \mu_f & := (f_{(0)} \cdots f_{(h)}) \cdot c_\circ.
\end{align}
Although they have no clear significance, understanding the automorphisms of $\CPF^{(m)}$ amounts to manipulating these quantities and checking various relations they satisfy. To begin, the computations in Section~\ref{sec:dihedralcluster} give:
\begin{align}
    \underline{\RR(f)} &= \kappa_f^{-1} \cdot \underline{f} \cdot \kappa_f,\\
    \underline{\SS(f)} &= \lambda_f^{-1} \cdot \underline{f}^{-1} \cdot \lambda_f,\\
    \underline{\SS\RR(f)} &= \mu_f^{-1} \cdot \underline{f}^{-1} \cdot \mu_f.
\end{align}
Since $W_{u v u^{-1}} = u W_v u^{-1}$, similar relations also hold for the corresponding parabolic subgroups:
\begin{align} \label{eq:rel_para1}
    W_{\underline{\RR(f)}} &= \kappa_f^{-1} \cdot W_{\underline{f}} \cdot \kappa_f,\\
    \label{eq:rel_para2}
    W_{\underline{\SS(f)}} &= \lambda_f^{-1} \cdot W_{\underline{f}} \cdot \lambda_f,\\
    \label{eq:rel_para3}
    W_{\underline{\SS\RR(f)}} &= \mu_f^{-1} \cdot W_{\underline{f}} \cdot \mu_f.
\end{align}
(Note that $W_{\underline{f}^{-1}} = W_{\underline{f}}$, since an element and its inverse have the same 1-eigenspace.)

\begin{defi}
    The two self-maps $\RRR$ and $\SSS$ on $\CPF^{(m)}$ are defined by:
\begin{align} \label{eq:defRRR}
    \RRR(f,wW_{\underline{f}}) 
    &= (\RR(f),wW_{\underline{f}}\cdot \kappa_f), \\
    \label{eq:defSSS}
    \SSS(f,wW_{\underline{f}}) 
    &= (\SS(f),wW_{\underline{f}}\cdot \lambda_f).
\end{align}
\end{defi}

Alternatively (via~\eqref{eq:rel_para1} and~\eqref{eq:rel_para2}), these can be written:
\begin{align} \label{eq:defRRR2}
    \RRR(f,wW_{\underline{f}}) 
    &= (\RR(f),w \kappa_f W_{\underline{\RR(f)}}), \\
    \label{eq:defSSS2}
    \SSS(f,wW_{\underline{f}}) 
    &= (\SS(f),w \lambda_f W_{\underline{\SS(f)}}).
\end{align}
The latter expressions makes clear that the right-hand sides are in $\CPF^{(m)}$, but the former are more clearly unambiguous.

\begin{lemm}
    For $f \in \Gamma^{(m)}$, we have $\mu_f = \kappa_f \cdot \lambda_{\RR(f)}$.
\end{lemm}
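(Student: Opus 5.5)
Set $f' := \RR(f)$. By definition $\lambda_{f'} = f'_{(0)} \cdot c_\circ$, so the whole computation reduces to finding $f'_{(0)}$. Equation~\eqref{eq:f'RR} gives it directly: $i=0$ lies in the range $0\leq i\leq (m-1)h+1$, hence
\[
  f'_{(0)} = f_{(h)}.
\]
This is the one step that needs care. The two cases of~\eqref{eq:f'RR} split at $(m-1)h+1$, and for $m=1$ this boundary value is $1$, so index $0$ still falls in the first case. One can also see this from~\eqref{eq:rr_delta}: $\RR$ maps $\varPi_{(h)}$ to $\varPi_{(0)}$. Concretely, $\varPi_{(h)} = \Pi_\circ^m$ and $\varPi_{(0)} = (-\Pi_\circ)^1$, and the second case of~\eqref{eq:defR} sends $\alpha^m$ with $\alpha\in\Pi_\circ$ to $(-\alpha)^1$, which has the same reflection $t_{-\alpha}=t_\alpha$. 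So $f'_{(0)} = f_{(h)}$ holds with no conjugation by $c$ appearing, for every $m\geq 1$.

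The identity then follows by expanding both sides:
\[
  \kappa_f\cdot\lambda_{\RR(f)}
  = (f_{(0)}\cdots f_{(h-1)})\cdot f_{(h)}\cdot c_\circ
  = (f_{(0)}\cdots f_{(h)})\cdot c_\circ
  = \mu_f ,
\]
which is exactly the defining formula~\eqref{eq:defmu}.

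The only possible obstacle is the index bookkeeping modulo $mh+2$, in particular confirming that the first case of~\eqref{eq:f'RR} applies at $i=0$ even for $m=1$. The direct check on roots above settles this.
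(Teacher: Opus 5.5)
Your proposal is correct and follows essentially the same route as the paper: read off $f'_{(0)} = f_{(h)}$ from \eqref{eq:f'RR}, substitute into $\lambda_{f'} = f'_{(0)}\, c_\circ$, and recognize $\kappa_f\cdot f_{(h)}\, c_\circ$ as $\mu_f$. Your extra check that $\RR$ sends $\Pi_\circ^m=\varPi_{(h)}$ onto $(-\Pi_\circ)^1=\varPi_{(0)}$ with $t_{-\alpha}=t_\alpha$ (so no conjugation by $c$ appears, even for $m=1$) is a valid confirmation of the one index step the paper takes for granted.
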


\begin{proof}
    Let $f' = \RR(f)$. From~\eqref{eq:f'RR}, we have $f'_{(0)} = f_{(h)}$. From~\eqref{eq:deflambda}, we get $\lambda_{f'} = f'_{(0)} \cdot c_\circ = f_{(h)} \cdot c_\circ$. From~\eqref{eq:defkappa}, we get $ \kappa_f \cdot \lambda_{f'} = (f_{(0)} \cdots f_{(h)} ) \cdot c_\circ$.  By~\eqref{eq:defmu}, we obtain the result.
\end{proof}

We have:
\begin{align*}
    \SSS(\RRR(f,wW_{\underline{f}}))
    &=
    \SSS( \RR(f) , wW_{\underline{f}}\cdot \kappa_f ) \\
    &=
    ( \SS\RR(f) , wW_{\underline{f}}\cdot \kappa_f \cdot \lambda_{\RR(f)}) \\
    &=
    ( \SS\RR(f) , wW_{\underline{f}}\cdot \mu_f ),
\end{align*}
which means that the composition $\SSS\RRR$ is the natural map that can be defined by analogy with~\eqref{eq:defRRR} and~\eqref{eq:defSSS}.

\begin{lemm}
    The maps $\SSS$ and $\SSS\RRR$ are involutions.
\end{lemm}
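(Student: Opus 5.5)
Both $\SS$ and $\SS\RR$ are already known to be involutions on $\Gamma^{(m)}$, as stated in Section~\ref{sec:dihedralcluster}. So for $\SSS$ the first component of $\SSS^2(f,wW_{\underline f})$ is $f$, and the second component is $wW_{\underline f}\cdot\lambda_f\lambda_{\SS(f)}$. It therefore suffices to show that $\lambda_f\lambda_{\SS(f)}\in W_{\underline f}$; in fact I expect it to be the identity. Similarly, the second component of $(\SSS\RRR)^2(f,wW_{\underline f})$ is $wW_{\underline f}\cdot\mu_f\mu_{\SS\RR(f)}$, by the computation preceding the statement. So I must show $\mu_f\mu_{\SS\RR(f)}\in W_{\underline f}$, and I aim for $\mu_f\mu_{\SS\RR(f)}=1_W$.

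For $\SSS$: let $f'=\SS(f)$. By~\eqref{eq:f'S}, $f'_{(0)}=f_{(0)}$, so $\lambda_{f'}=f'_{(0)}c_\circ=f_{(0)}c_\circ=\lambda_f$. Since $f_{(0)}c_\circ$ is a product of pairwise commuting reflections indexed by $\Pi_\circ$, as noted in Section~\ref{sec:invaS}, it is an involution. Hence $\lambda_f\lambda_{f'}=\lambda_f^2=1_W$, and $\SSS^2=\III$.

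For $\SSS\RRR$: let $f'=\SS\RR(f)$. I use~\eqref{eq:SR} for the blocks $f'_{(i)}$ with $0\le i\le h$; these correspond to $0\le h-i\le h$, which gives $f'_{(i)}=c_\circ f_{(h-i)}c_\circ$. Then
\[
\mu_{f'}=(f'_{(0)}\cdots f'_{(h)})c_\circ=c_\circ (f_{(h)}\cdots f_{(0)})c_\circ c_\circ=c_\circ(f_{(h)}\cdots f_{(0)}).
\]
Multiplying,
\[
\mu_f\mu_{f'}=(f_{(0)}\cdots f_{(h)})c_\circ c_\circ(f_{(h)}\cdots f_{(0)})=1_W,
\]
because $c_\circ^2=1_W$ and each $f_{(i)}$ is an involution. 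Thus $(\SSS\RRR)^2=\III$.

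The main point requiring care is the index bookkeeping in~\eqref{eq:SR}. The range $0\le i\le h$ must indeed fall in the $c_\circ$-conjugation case, including the endpoints $i=0$ and $i=h$. Block $\varPi_{(h)}$ contains roots of $\Pi_\circ^m$, and $\SS\RR$ sends these to $(-\Pi_\circ)^1=\varPi_{(0)}$ via case (iii) of~\eqref{eq:SSRR}, where $t_{-\alpha}=t_\alpha=c_\circ t_\alpha c_\circ$. Both endpoint cases are consistent with the formula, and I would double-check this against~\eqref{eq:SSRR} before concluding.
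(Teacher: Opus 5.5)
Your proof is correct and is essentially the paper's argument: for $\SSS$ you use $\lambda_{\SS(f)}=\lambda_f$ together with $\lambda_f^2=1_W$, and for $\SSS\RRR$ you use \eqref{eq:SR} to get $\mu_{\SS\RR(f)}=c_\circ(f_{(h)}\cdots f_{(0)})=\mu_f^{-1}$. Your extra check of the endpoint blocks $\varPi_{(0)}$ and $\varPi_{(h)}$ against \eqref{eq:SSRR} is consistent and harmless.
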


\begin{proof}
    Let's begin with $\SSS$.  Let $f\in\Gamma^{(m)}$ and $f' = \SS(f)$.  By~\eqref{eq:f'S}, we have $f'_{(0)} = f_{(0)}$, so that:
    \[
        \lambda_{\SS(f)} = \lambda_{f}.
    \]
    Then note that
    \[
        \lambda_{f} = f_{(0)} c_\circ = \prod_{\alpha \in \varPi_\circ - f} t_\alpha
    \]
    is involutive as a product of pairwise commuting reflections.  So, $\lambda_{f} \lambda_{f'} = \lambda_f^2 = 1_W$.  We then have:
    \[
        \SSS^2( f , w W_{\underline{f}} )
        =
        \SSS( f' , w W_{\underline{f}} \cdot \lambda_f )
        =
        ( \SS^2(f) , w W_{\underline{f}} \cdot \lambda_f \cdot \lambda_{f'} )
        =
        ( f , w W_{\underline{f}} ).
    \]
    So, $\SSS$ is an involution.

    Let's do the same for $\SSS\RRR$.  Let $f' = \SS\RR(f)$.  By~\eqref{eq:SR} and the definition of $\mu_f$, we have
    \[
        \mu_{f'} = (c_\circ f_{(h)}c_\circ) \cdots (c_\circ f_{(0)}c_\circ) \cdot c_\circ
        =
        c_\circ \cdot ( f_{(h)} \cdots f_{(0)} ) = \mu_f^{-1}.
    \]
    It follows:
    \[
        (\SSS\RRR)^2(f,w W_{\underline{f}})
        =
        \SSS\RRR( \SS\RR(f) , w W_{\underline{f}} \mu_f)
        =
        ( f , w W_{\underline{f}} \mu_f \mu_{f'})
        =
        ( f , w W_{\underline{f}}).
    \]
    So, $\SSS\RRR$ is also an involution.
\end{proof}

\begin{lemm}
    For any $f\in\Gamma^{(m)}$, we have:
    \begin{align}
        \kappa_{f} \cdot \kappa_{\RR(f)} \cdots \kappa_{\RR^{mh+1}(f)}
        = \underline{f}^h.
    \end{align}
\end{lemm}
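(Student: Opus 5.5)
The plan is to turn the block-shifting formula~\eqref{eq:f'RR} into a single bi-infinite sequence of involutions, so that every $\kappa_{\RR^k(f)}$ becomes a consecutive window of that sequence and the whole product telescopes.

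\textbf{Step 1: the extended sequence.} Fix $f\in\Gamma^{(m)}$ and define $g_j\in W$ for all $j\in\mathbb{Z}$ by
\[
  g_j := f_{(j)} \quad (0\le j\le mh+1), \qquad g_{j+mh+2} := c\, g_j\, c^{-1}.
\]
So $g_{j+N(mh+2)} = c^N g_j c^{-N}$ for all $N\in\mathbb{Z}$.

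\textbf{Step 2: the blocks of $\RR^k(f)$.} I claim that for all $k\ge 0$ and $0\le i\le mh+1$,
\[
  \RR^k(f)_{(i)} = g_{i+kh}.
\]
The case $k=1$ is exactly~\eqref{eq:f'RR}. If $i+h\le mh+1$, then $f'_{(i)}=f_{(i+h)}=g_{i+h}$. If $i+h\ge mh+2$, then $f'_{(i)} = c\,f_{(i+h-(mh+2))}\,c^{-1} = g_{i+h}$.

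It follows that the extended sequence attached to $\RR(f)$ is the shift $j\mapsto g_{j+h}$ of the one attached to $f$. The twisting rule is shift-invariant, so the two sequences agree on the window $0\le j\le mh+1$ and hence everywhere. Induction on $k$ then gives the claim.

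Substituting into~\eqref{eq:defkappa} gives
\[
  \kappa_{\RR^k(f)} = g_{kh}\,g_{kh+1}\cdots g_{kh+h-1}.
\]

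\textbf{Step 3: telescoping.} The windows for $k=0,\dots,mh+1$ are consecutive, so
\[
  \kappa_f\cdots\kappa_{\RR^{mh+1}(f)} = g_0\,g_1\cdots g_{(mh+2)h-1}.
\]
Regroup this product into $h$ consecutive chunks of length $mh+2$. The $j$-th chunk is
\[
  g_{j(mh+2)}\cdots g_{j(mh+2)+mh+1} = c^j\,P\,c^{-j}, \qquad P := f_{(0)}\cdots f_{(mh+1)}.
\]
Each $f_{(i)}$ is an involution, so $P = \big(f_{(mh+1)}\cdots f_{(0)}\big)^{-1} = \big(\textstyle\prod f\big)^{-1}$.

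The product of the chunks is
\[
  \prod_{j=0}^{h-1} c^jPc^{-j} = (Pc)^h\,c^{-h} = (Pc)^h,
\]
using $c^h=1_W$. Finally $Pc = (\prod f)^{-1}c = \underline{f}$, which gives the identity $\kappa_f\cdots\kappa_{\RR^{mh+1}(f)} = \underline{f}^h$.

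\textbf{Main obstacle.} The only delicate point is Step~2. One must check that the twisting convention $g_{j+mh+2}=c\,g_j\,c^{-1}$, with conjugation by $c$ rather than $c^{-1}$, matches the wrap-around case of~\eqref{eq:f'RR}, and that it is stable under iteration. Everything after that is bookkeeping.
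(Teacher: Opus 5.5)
Your proof is correct and is essentially the paper's argument. The paper also introduces the twisted sequence $u_{(mh+2)i+j} = c^i f_{(j)} c^{-i}$ and proves $\RR^i(f)_{(j)} = u_{hi+j}$ by induction from the block-shift formula for $\RR$. It then evaluates $u_0 \cdots u_{(mh+2)h-1}$ twice: in windows of length $h$, giving the product of the $\kappa$'s, and in chunks of length $mh+2$, giving $\big((\prod f)^{-1}c\big)^h = \underline{f}^h$.

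The only difference is cosmetic. The paper indexes cyclically modulo $(mh+2)h$, which tacitly needs $c^h = 1_W$ for the indexing to be consistent. Your bi-infinite sequence makes the induction step cleaner and uses $c^h = 1_W$ only in the final regrouping.
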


\begin{proof}
    Let us define a sequence $(u_i)_{0\leq i \leq (mh+2)h-1}$ by the rule
    \[
        u_{(mh+2)i+j} = c^i \cdot f_{(j)} \cdot c^{-i}
    \]
    if $0 \leq i < h$ and $0 \leq j < mh+2$ are integers.  Note that we have
    \begin{align} \label{eq:shiftconj}
        u_{i+(mh+2)} = c \cdot u_{i} \cdot c^{-1},
    \end{align}
    for any $i$ (indices are modulo $(mh+2)h$).  We first compute
    \begin{align*}
        u_{(mh+2)i} \cdot u_{(mh+2)i+1} \cdots u_{(mh+2)(i+1)-1}
        &= (c^i \cdot f_{(0)} \cdot c^{-i}) \cdots (c^i \cdot f_{(mh+1)} \cdot c^{-i}) \\
        &= c^i \cdot (\prod f)^{-1} \cdot c^{-i},
    \end{align*}
    so that the whole product is
    \begin{align}
        u_0 u_1 \cdots u_{(mh+2)h-1} 
        &= 
        (\prod f)^{-1} \cdot 
        \Big( c \cdot (\prod f)^{-1} \cdot c^{-1} \Big) \cdots 
        \Big( c^{h-1} \cdot (\prod f)^{-1} \cdot c^{-h+1} \Big)\\
        \label{eq:wholeproduct1}
        &=
        \Big((\prod f)^{-1} \cdot c \Big)^h = \underline{f}^h,
    \end{align}
    where we simplified by $c^{-i}c^{i+1} = c$ and $c^{-h+1} = c$.  To complete the proof, we will compute this product by gathering factors in another way.
    
    First, we show that 
    \begin{align} \label{eq:Rishit}
        \RR^i(f)_{(j)} = u_{hi+j},
    \end{align}
    where $0\leq i,j \leq mh+1$.  This is done by induction on $i$.  The case $i=0$ is clear because $u_i = f_{(i)}$ if $0\leq i \leq mh+1$, by definition.  The induction from $i$ to $i+1$ comes from~\eqref{eq:f'RR}.  More precisely, if $0\leq j \leq (m-1)h+1$ then the first case of~\eqref{eq:f'RR} gives:
    \[
        \RR^i(f)_{(j)} = \RR^{i-1}(f)_{(j+h)} = u_{h(i-1)+j+h} = u_{hi+j}.
    \]
    And if $(m-1)h+2 \leq j \leq mh+1$, the second case of~\eqref{eq:f'RR} gives:
    \[
        \RR^i(f)_{(j)} = c \cdot \RR^{i-1}(f)_{(j+h-mh-2)} \cdot c^{-1} = c\cdot u_{hi+j-mh-2} \cdot c^{-1}.
    \]
    (Equation~\eqref{eq:f'RR} is written with indices modulo $mh+2$, here $j+h-mh-2$ is the representative of $j+h$ in $\{0,\dots mh+1\}$). By~\eqref{eq:shiftconj}, the previous equation proves~\eqref{eq:Rishit}.  
    
    From the definition of $\kappa_f$ in~\eqref{eq:defkappa}, we then obtain:
    \begin{align}
        \kappa_{\RR^i(f)} = u_{hi} \cdot u_{hi+1} \cdots u_{h(i+1)-1}.
    \end{align}
    So, by gathering the factors we have:
    \[
        u_0 \cdots u_{(mh+2)h-1} 
        =
        \kappa_{f} \cdot \kappa_{\RR(f)} \cdots \kappa_{\RR^{mh+1}(f)}.
    \]
    This completes the proof.
\end{proof}

\begin{prop}
    We have $\RRR^{mh+2} = \III$.
\end{prop}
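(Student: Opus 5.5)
Iterating the definition \eqref{eq:defRRR} gives $\RRR^{mh+2}$ directly. By induction on $k$,
\[
    \RRR^k(f,wW_{\underline{f}}) = \big(\RR^k(f),\, wW_{\underline{f}}\cdot \kappa_f\kappa_{\RR(f)}\cdots\kappa_{\RR^{k-1}(f)}\big).
\]
The induction step applies \eqref{eq:defRRR} to the pair $(\RR^{k-1}(f), wW_{\underline{f}}\kappa_f\cdots\kappa_{\RR^{k-2}(f)})$. For this we need the coset $wW_{\underline{f}}\kappa_f\cdots\kappa_{\RR^{k-2}(f)}$ to be a left coset of $W_{\underline{\RR^{k-1}(f)}}$. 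This holds by iterating \eqref{eq:rel_para1}:
\[
    W_{\underline{f}}\,\kappa_f\cdots\kappa_{\RR^{k-2}(f)} = \kappa_f\cdots\kappa_{\RR^{k-2}(f)}\, W_{\underline{\RR^{k-1}(f)}}.
\]
So the right multiplication formula is unambiguous at every step.

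Now take $k=mh+2$. Since $\RR^{mh+2}=\II$, the first component returns to $f$. By the preceding lemma, the accumulated product is
\[
    \kappa_f\kappa_{\RR(f)}\cdots\kappa_{\RR^{mh+1}(f)} = \underline{f}^h,
\]
and hence
\[
    \RRR^{mh+2}(f,wW_{\underline{f}}) = (f,\, wW_{\underline{f}}\,\underline{f}^h).
\]

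It remains to see that $\underline{f}^h\in W_{\underline{f}}$, which is immediate. Indeed, $\underline{f}$ fixes $\Fix(\underline{f})$ pointwise, so $\underline{f}\in W_{\underline{f}}$ by the definition of $W_\pi$, and the same holds for all its powers. Therefore $wW_{\underline{f}}\,\underline{f}^h = wW_{\underline{f}}$, and $\RRR^{mh+2}=\III$.

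The only real content is the product identity of the previous lemma, which we are allowed to assume. The one point needing care is the bookkeeping in the induction: at each stage the coset must be written relative to the correct parabolic subgroup, and \eqref{eq:rel_para1} is exactly what makes this consistent.
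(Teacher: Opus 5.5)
Your proof is correct and follows the paper's argument exactly: you iterate the definition of $\RRR$, invoke the preceding lemma $\kappa_f\kappa_{\RR(f)}\cdots\kappa_{\RR^{mh+1}(f)}=\underline{f}^h$, and absorb $\underline{f}^h$ into $W_{\underline{f}}$ because $\underline{f}\in W_{\underline{f}}$. Your extra check via \eqref{eq:rel_para1}, that each intermediate coset is a left coset of the correct parabolic subgroup, is bookkeeping the paper leaves implicit.
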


\begin{proof}
    By iterating the definition of $\RRR$, we have
    \[
        \RRR^i( f , w W_{\underline{f}})
        =
        (\RR^i(f) , w W_{\underline{f}} \cdot \kappa_f \cdot \kappa_{\RR(f)} \cdots \kappa_{\RR^{i-1}(f)} ).
    \] 
    From $\RR^{mh+2} = \II$ together with the previous lemma, we get
    \[
        \RRR^{mh+2}( f , w W_{\underline{f}})
            =
        (f , w W_{\underline{f}} \cdot \underline{f}^h).
    \]
    Note that $\underline{f} \in W_{\underline{f}}$, so that $W_{\underline{f}} \cdot  \underline{f}^h = W_{\underline{f}}$.
\end{proof}

\begin{lemm} \label{lemm_kappakappa}
    Let $f\in\Gamma^{(m)}$, and $f'\subset f$.
    We have:
    \[
        \kappa_f (\kappa_{f'})^{-1}  
            \in
        W_{\underline{f'}}
    \]
\end{lemm}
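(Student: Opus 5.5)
The plan is to show the equivalent statement $\kappa_{f'}\kappa_f^{-1} \in W_{\underline{f'}}$, which suffices because $W_{\underline{f'}}$ is a subgroup. The idea is to read $\kappa_f^{-1}$ as a suffix of a reduced reflection factorization of $\prod f$ and to shuffle the letters coming from $f'$ to the left.

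Set $g := f\setminus f'$ and $k=|f|$. Expanding each $f_{(i)}$ into its (commuting) reflections, Proposition~\ref{prop:tzanaki} gives a word $\prod f = f_{(mh+1)}\cdots f_{(0)}$ of $k$ reflections. This word is reduced, since $\prod f\in\NC_k$ has reflection length $k$. Its suffix $f_{(h-1)}\cdots f_{(0)}$ equals $\kappa_f^{-1}$, because each $f_{(i)}$ is an involution. The same holds for $f'$: $\prod f'$ is the subword on letters from $f'$ (in the same block order), and $\kappa_{f'}^{-1}$ is its suffix from blocks $h-1,\dots,0$.

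Next, move every letter from $f'$ to the left, past letters from $g$, using the Hurwitz-type move $t\,s = s\,(s t s)$. Each $g$-letter is replaced by its conjugate by the $f'$-letters lying to its right, and the $f'$-letters keep their relative order. Applied to the whole word, this gives
\[
  \textstyle\prod f = \big(\prod f'\big)\cdot Z, \qquad Z = z_1\cdots z_{|g|},\quad z_j\in T .
\]
The $f'$-letters to the right of a $g$-letter in the suffix $\kappa_f^{-1}$ all lie in that same suffix. Hence the same move applied only to the suffix gives $\kappa_f^{-1} = \kappa_{f'}^{-1}\cdot Z_2$, where $Z_2$ is the product of the last few factors $z_j$ of $Z$. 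Therefore $\kappa_{f'}\kappa_f^{-1} = Z_2$.

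It remains to see that each $z_j$ lies in $W_{\underline{f'}}$. Since $\ell(\prod f)=k=\ell(\prod f')+|g|$, the factorization $\prod f = (\prod f')\, Z$ is length-additive, so $\ell(Z)=|g|$ and $z_1\cdots z_{|g|}$ is a reduced $T$-word. We have $\prod f' \le \prod f \le c$, so $Z = (\prod f')^{-1}\prod f \le (\prod f')^{-1}c = \underline{f'}$. This uses the standard fact that for $a\le b\le c$ in absolute order, $a^{-1}b\le a^{-1}c$, which follows from additivity of lengths: $\ell(a^{-1}b)+\ell(b^{-1}c)=\ell(b)-\ell(a)+\ell(c)-\ell(b)=\ell(a^{-1}c)$. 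Every letter of a reduced $T$-word of $Z$ satisfies $z_j \le Z \le \underline{f'}$. The remaining general fact is that $u\le v$ implies $\Fix(v)\subset\Fix(u)$, which is Carter's lemma on additivity of codimensions of fixed spaces. With it, each $z_j$ fixes $\Fix(\underline{f'})$ pointwise, so $z_j\in W_{\underline{f'}}$ by definition, and hence $Z_2\in W_{\underline{f'}}$.

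The main obstacle is the bookkeeping in the shuffling step: one must check that the $Z_2$ obtained from the suffix consists exactly of the last factors of the $Z$ obtained from the whole word, since only then does the order argument apply to $Z_2$. This is where the block ordering (blocks $h-1,\dots,0$ form a genuine suffix) is used. I would verify it by performing the moves right-to-left, one $f'$-letter at a time.
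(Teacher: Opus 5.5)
Your proof is correct, but it is organized differently from the paper's.

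The paper first reduces to the case $\dim f'=\dim f-1$ by transitivity. It writes $\kappa_f\kappa_{f''}^{-1}=\kappa_f\kappa_{f'}^{-1}\cdot\kappa_{f'}\kappa_{f''}^{-1}$ and uses $W_{\underline{f'}}\subset W_{\underline{f''}}$ for $f''\subset f'$. With a single removed vertex there is nothing to shuffle:
\begin{itemize}
\item if the vertex lies in a block $\varPi_{(i)}$ with $i\geq h$, then $\kappa_f=\kappa_{f'}$;
\item otherwise the paper defines the reflection $t$ by $(\prod f')\,t=\prod f$ and cancels the common prefix $f_{(mh+1)}\cdots f_{(h)}=f'_{(mh+1)}\cdots f'_{(h)}$ to get $\kappa_{f'}^{-1}t=\kappa_f^{-1}$; then $t\leq K(\prod f')$ gives $t\in W_{\underline{f'}}$.
\end{itemize}

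Your Hurwitz-move bookkeeping replaces this induction and treats all of $f\setminus f'$ at once. The price is the shuffling argument plus one extra fact: each letter of a reduced $T$-word of $Z$ lies below $Z$. The paper never needs this because its $Z$ is a single reflection.

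In return you get slightly more than membership in $W_{\underline{f'}}$. Since $\kappa_{f'}\kappa_f^{-1}=Z_2$ is a suffix of a reduced $T$-factorization of $Z\leq\underline{f'}$, it satisfies $Z_2\leq\underline{f'}$ in absolute order. The transitivity argument does not directly provide this.
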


\begin{proof}
    We can assume $\dim(f') = \dim(f)-1$, as the general case follows by transitivity. For example, if $f''\subset f' \subset f$ and the dimensions differ by 1 (the general case with more than 3 terms is similar), the property for $f''$ and $f$ is deduced by writing:
    \[
        \kappa_f (\kappa_{f''})^{-1}
            =
        \kappa_f (\kappa_{f'})^{-1}\cdot
        \kappa_{f'}(\kappa_{f''})^{-1}
            \in
        W_{\underline{f'}}\cdot
        W_{\underline{f''}}
            =
        W_{\underline{f''}},
    \]
    where the last equality holds because $f''\subset f'$ implies $W_{\underline{f'}} \subset W_{\underline{f''}}$.  

    The case $\kappa_f = \kappa_{f'}$ is clear, so we can assume otherwise.  This means that the unique vertex in $f-f'$ is in $\bigcup_{i=0}^{h-1} \varPi_{(i)} $, by definition of $\kappa_f = f_{(0)} \cdots f_{(h-1)}$.

    Since $f'\subset f$, we have $\prod f' \leq \prod f$ in the absolute order.  More precisely, since the dimensions differ by one there is a cover relation $\prod f' \lessdot \prod f$.  Let $t\in T$ be defined by $(\prod f')t = \prod f$.  From $\prod f' \lessdot (\prod f')t$, we deduce that $t \leq K(\prod f') $ in the absolute order, so that $t \in W_{\underline{f'}}$.  Eventually, write
    \[
        f'_{(mh+1)} \cdots f'_{(0)} \cdot t
            =
        f_{(mh+1)} \cdots f_{(0)}.
    \]
    We can simplify by $f'_{(mh+1)} \cdots f'_{(h)} = f_{(mh+1)} \cdots f_{(h)} $ (which holds because the unique vertex of $f-f'$ is in $\bigcup_{i=0}^{h-1} \varPi_{(i)} $) to get 
    \[
        (\kappa_{f'})^{-1} t = \kappa_f^{-1},
    \]
    so that $\kappa_f (\kappa_{f'})^{-1} = t \in W_{\underline{f'}}$.
\end{proof}

\begin{lemm} \label{lemm_lambdalambda}
    Let $f\in\Gamma^{(m)}$, and $f'\subset f$.
    We have:
    \[
        \lambda_f (\lambda_{f'})^{-1}  
            \in
        W_{\underline{f'}}.
    \]
\end{lemm}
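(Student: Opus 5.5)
I will follow the proof of Lemma~\ref{lemm_kappakappa} step by step. First, by the same transitivity argument, it suffices to treat the case $\dim(f') = \dim(f)-1$. Indeed, the containment $f''\subset f'$ gives $W_{\underline{f'}}\subset W_{\underline{f''}}$, so
\[
\lambda_f\lambda_{f''}^{-1}=\lambda_f\lambda_{f'}^{-1}\cdot\lambda_{f'}\lambda_{f''}^{-1}\in W_{\underline{f''}}.
\]
Since $\lambda_f = f_{(0)}c_\circ$ depends only on $f\cap\varPi_{(0)}$, the case where the unique vertex of $f-f'$ lies outside $\varPi_{(0)}$ gives $\lambda_f=\lambda_{f'}$, and there is nothing to prove. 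So we may assume the removed vertex is some $(-\alpha)^1$ with $\alpha\in\Pi_\circ$. Then $f_{(0)} = f'_{(0)}t_\alpha$, where $t_\alpha$ commutes with $f'_{(0)}$, and $\lambda_f\lambda_{f'}^{-1} = f_{(0)}c_\circ c_\circ f'_{(0)} = f_{(0)}f'_{(0)} = t_\alpha$.

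It then remains to show that $t_\alpha\in W_{\underline{f'}}$. As in the previous proof, $f'\subset f$ with a codimension-one difference gives a cover relation $\prod f'\lessdot\prod f$. Define $t\in T$ by $(\prod f')t=\prod f$; then $t\le K(\prod f')=\underline{f'}$, so $t\in W_{\underline{f'}}$. Now compare the two products. All blocks of $f$ and $f'$ agree except block $0$, which is the rightmost factor, and $f_{(0)}=f'_{(0)}t_\alpha$ with commuting factors. Hence
\[
\textstyle\prod f = f'_{(mh+1)}\cdots f'_{(1)}\, f'_{(0)}\,t_\alpha = \big(\prod f'\big)\, t_\alpha ,
\]
so $t=t_\alpha$, which lies in $W_{\underline{f'}}$, as required.

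The only point needing care is the claim $t\le K(\prod f')\Rightarrow t\in W_{\underline{f'}}$. This is the same fact used in Lemma~\ref{lemm_kappakappa}: an element below $\underline{f'}$ in absolute order fixes $\Fix(\underline{f'})$ pointwise. This argument is actually simpler than for $\kappa$, because block $0$ is the last factor and no cancellation of other blocks is needed.
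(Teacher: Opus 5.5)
Your proof is correct and is essentially the paper's argument: you reduce to codimension one, dismiss the case where the removed vertex is not in $\varPi_{(0)}$, and identify $\lambda_f\lambda_{f'}^{-1}$ with the reflection $t$ satisfying $(\prod f')t=\prod f$, which lies below $K(\prod f')=\underline{f'}$ and hence in $W_{\underline{f'}}$. The only cosmetic difference is that you name $t=t_\alpha$ directly from $f_{(0)}=f'_{(0)}t_\alpha$, whereas the paper obtains $f'_{(0)}t=f_{(0)}$ by cancelling the common blocks $f_{(mh+1)}\cdots f_{(1)}$.
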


\begin{proof}
    By the same argument as in the previous lemma, we can assume that $\dim(f') = \dim(f)-1$.  The case  $\lambda_{f'} = \lambda_{f}$ is clear, so we assume otherwise which means that the unique vertex in $f - f'$ is in $\varPi_{(0)}$ (by definition of $\lambda_f = f_{(0)}c_\circ$).

    As in the previous lemma, we use the relation $\prod f' \lessdot \prod f$ to obtain $t\in W_{\underline{f'}}$ such that $(\prod f')t = \prod f$.  Now, we write 
    \[
        f'_{(mh+1)} \cdots f'_{(0)} \cdot t
            =
        f_{(mh+1)} \cdots f_{(0)}
    \]
    and simplify by $f'_{(mh+1)} \cdots f'_{(1)} = f_{(mh+1)} \cdots f_{(1)} $ (which holds because the unique vertex of $f-f'$ is in $\varPi_{(0)} $) to get 
    \[
        f'_{(0)} t = f_{(0)}.
    \]
    Eventually, we have
    \[
        \lambda_{f} (\lambda_{f'})^{-1}
            =
        f_{(0)} c_\circ \cdot c_\circ (f'_{(0)})^{-1}
            =
        t \in W_{\underline{f'}},
    \]
    so that $\lambda_{f} (\lambda_{f'})^{-1} \in W_{\underline{f'}}$.
\end{proof}

\begin{prop}
    The maps $\RRR$ and $\SSS$ preserves the simplicial structure of $\CPF^{(m)}$, so that $\langle\RRR,\SSS\rangle \subset \Aut(\CPF^{(m)})$.
\end{prop}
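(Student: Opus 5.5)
To show that $\RRR$ and $\SSS$ are simplicial automorphisms, it suffices to check three things. First, each map is well defined on cosets. Second, each is a bijection on $\CPF^{(m)}$. Third, each preserves the face relation: if $(f',wW_{\underline{f'}})$ is a face of $(f,wW_{\underline{f}})$, then its image is a face of the image. The group statement then follows immediately.

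Well-definedness is clear from the formulation \eqref{eq:defRRR}, since right multiplication of the coset $wW_{\underline{f}}$ by a fixed element does not depend on the representative $w$. For bijectivity, $\SSS$ is an involution by the lemma above, so it is a bijection. For $\RRR$, we have $\RRR^{mh+2}=\III$ by the proposition above, so $\RRR$ is also a bijection, with inverse $\RRR^{mh+1}$. Alternatively, one could write $\RRR=\SSS\cdot(\SSS\RRR)$ as a product of two involutions.

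The key step is compatibility with faces. Recall that the faces of $(f,wW_{\underline{f}})$ are the pairs $(f',wW_{\underline{f'}})$ with $f'\subset f$. Since $\RR$ is an automorphism of $\Gamma^{(m)}$, we have $\RR(f')\subset\RR(f)$. It therefore remains to show that the coset attached to $\RRR(f')$ is determined by the one attached to $\RRR(f)$, namely
\[
    w\kappa_{f'} W_{\underline{\RR(f')}} \supseteq w\kappa_f W_{\underline{\RR(f)}}.
\]
By \eqref{eq:rel_para1}, this amounts to $wW_{\underline{f'}}\kappa_{f'} \supseteq wW_{\underline{f}}\kappa_f$. Since $W_{\underline{f}}\subset W_{\underline{f'}}$, this is equivalent to $\kappa_f\kappa_{f'}^{-1}\in W_{\underline{f'}}$, which is exactly Lemma~\ref{lemm_kappakappa}. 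The same argument with \eqref{eq:rel_para2} and Lemma~\ref{lemm_lambdalambda} handles $\SSS$.

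Since the face relation in $\CPF^{(m)}$ is the product order and both maps are bijections compatible with it, it remains to ensure that the inverse maps also preserve faces, so that the maps are poset automorphisms. This follows because the inverses are powers of $\RRR$ (respectively, $\SSS$ itself), which preserve faces by the argument above. Hence $\RRR,\SSS\in\Aut(\CPF^{(m)})$.

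The only delicate point I expect is the translation between the two coset descriptions, that is, checking that the containment of cosets really reduces to the membership $\kappa_f\kappa_{f'}^{-1}\in W_{\underline{f'}}$. The lemmas proved above are designed precisely for this reduction.
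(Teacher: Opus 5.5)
Your proposal is correct and follows the paper's argument. The key step in both is reducing the coset containment $wW_{\underline{f}}\kappa_f \subset wW_{\underline{f'}}\kappa_{f'}$ to Lemma~\ref{lemm_kappakappa} (respectively Lemma~\ref{lemm_lambdalambda} for $\SSS$), using $W_{\underline{f}} \subset W_{\underline{f'}}$. You also check bijectivity and that the inverses preserve faces, via $\RRR^{mh+2}=\III$ and $\SSS^2=\III$; the paper leaves this implicit, and your argument for it is sound.
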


\begin{proof}
    Let us prove the result for $\RRR$.  The result for $\SSS$ is obtained by writing $\lambda$ in place of $\kappa$, and using Lemma~\ref{lemm_lambdalambda} in place of Lemma~\ref{lemm_kappakappa}.

    Let $(f,wW_{\underline{f}})$ be a face of $\CPF^{(m)}$.  A smaller face can be written $(f',wW_{\underline{f'}})$ for $f'\subset f$.  We want to show:
    \[
        \big(\RR(f'), wW_{\underline{f'}}\kappa_{f'}\big) 
            \leq
        \big( \RR(f), wW_{\underline{f}}\kappa_f\big)
    \]
    for the product order on $\Gamma^{(m)}\times\PF$.  Since $\RR\in\Aut(\Gamma^{(m)})$, it remains to prove 
    \begin{align} \label{eq:kappacosets}
        wW_{\underline{f}}
        \kappa_f
            \subset
        wW_{\underline{f'}}
        \kappa_{f'}.
    \end{align}

    From Lemma~\ref{lemm_kappakappa}, we have $\kappa_f \in W_{\underline{f'}} \kappa_{f'}$.  Also, recall that $f'\subset f$ implies $W_{\underline{f}} \subset W_{\underline{f'}}$.  So we obtain $W_{\underline{f}} \kappa_f \subset W_{\underline{f'}} \kappa_{f'}$, and deduce~\eqref{eq:kappacosets}.
\end{proof}

The proof of Theorem~\ref{theo:extend} is almost complete: the first two points are clear from the definition, the third point follows from the lemmas in this section.  Uniqueness (up to the actions of central elements) will be proved in the next section.

\section{\texorpdfstring{Automorphisms of $\CPF$}{Automorphisms of CPF}}
\label{sec:unicity}

Let's first describe some edges in the adjacency graph of the facets of $\CPF^{(m)}$.  Let's fix a cluster $f\in\Gamma^{(m)}$.  For each $\alpha\in f$, let $W_{\underline{f-\{\alpha\}}} = \{e , t_\beta \}$ where $\beta$ implicitly depends on $f$ and $\alpha$.  (Since $f-\{\alpha\}$ has codimension $1$, the parabolic subgroup $W_{\underline{f-\{\alpha\}}}$ has rank $1$.)
The $1$-codimensional faces of $(f,\{w\})$ are: $(f-\{\alpha\},\{w,wt_\beta\})$, for $\alpha \in f$. In the facet adjacency graph, the neighbors of $(f,\{w\})$ through $(f-\{\alpha\},\{w,wt_\beta\})$ are:
\begin{itemize}
    \item one neighbor $(f,\{w t_\beta\})$,
    \item for each cluster $f' = (f-\{\alpha\}) \cup \{\alpha'\}$ (a neighbor of $f$ in $\Gamma^{(m)}$), two neighbors $(f',\{w\})$ and $(f',\{wt_\beta\})$.
\end{itemize}

(To check this, note that the link of each 1-codimensional face in $\CPF^{(m)}(W)$ is isomorphic to $\CPF^{(m)}(\mathfrak{S}_2)$, which consists in $2m$ isolated points. 
 See~\cite[Proposition~3.6]{douvropoulosjosuatverges2}.)

\begin{lemm} \label{lemm:betagen}
    With the notation above, the $n$ values of $\beta$ (corresponding to the $n$ values of $\alpha$ for a fixed $f$) can be ordered as a sequence $\beta_1,\dots,\beta_n$ so that $c = t_{\beta_1} \cdots t_{\beta_n}$.
\end{lemm}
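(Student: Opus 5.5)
The plan is to compute $\underline{f-\{\alpha\}}$ explicitly from an ordered factorization of $c$ attached to the cluster $f$, and then use a telescoping (Hurwitz-type) identity. Since $f$ is a facet, Proposition~\ref{prop:tzanaki} gives $\prod f = f_{(mh+1)}\cdots f_{(0)} = c$. Each $f_{(i)}$ is a product of pairwise commuting reflections, so after choosing an arbitrary order inside each block I expand this as
\[
    c = t_{\gamma_1} t_{\gamma_2}\cdots t_{\gamma_n},
\]
where $\gamma_1,\dots,\gamma_n$ are the (uncolored) roots underlying the vertices of $f$. They are listed block by block, from $\varPi_{(mh+1)}$ down to $\varPi_{(0)}$. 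Let $\alpha$ be the vertex with root $\gamma_k$.

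Removing $\alpha$ deletes the corresponding factor, and the block order of the remaining vertices is unchanged. So $\prod(f-\{\alpha\}) = t_{\gamma_1}\cdots \widehat{t_{\gamma_k}}\cdots t_{\gamma_n}$. By the definition of $\underline{\,\cdot\,}$ via the Kreweras complement,
\[
    \underline{f-\{\alpha\}} = \big(t_{\gamma_1}\cdots \widehat{t_{\gamma_k}}\cdots t_{\gamma_n}\big)^{-1} c
    = t_{\gamma_n}\cdots t_{\gamma_{k+1}}\, t_{\gamma_k}\, t_{\gamma_{k+1}}\cdots t_{\gamma_n}.
\]
This is a reflection $t_{\beta}$ with $\beta = t_{\gamma_n}\cdots t_{\gamma_{k+1}}(\gamma_k)$. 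A reflection is a Coxeter element of its own rank-one parabolic subgroup, so $W_{\underline{f-\{\alpha\}}}=\{e,t_\beta\}$. This identifies the $\beta$ attached to $\alpha$, and I will call it $\beta^{(k)}$.

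The key step is the telescoping identity
\[
    t_{\beta^{(n)}}\, t_{\beta^{(n-1)}} \cdots t_{\beta^{(k)}} = t_{\gamma_k} t_{\gamma_{k+1}}\cdots t_{\gamma_n},
\]
which I will prove by descending induction on $k$. The base case is $t_{\beta^{(n)}} = t_{\gamma_n}$. For the inductive step, write $v_k = t_{\gamma_{k+1}}\cdots t_{\gamma_n}$, so that $t_{\beta^{(k)}} = v_k^{-1} t_{\gamma_k} v_k$. Then
\[
    v_k\, t_{\beta^{(k)}} = v_k v_k^{-1} t_{\gamma_k} v_k = t_{\gamma_k} v_k,
\]
which is exactly what is needed. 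Taking $k=1$ gives $c = t_{\beta^{(n)}}\cdots t_{\beta^{(1)}}$. Setting $\beta_i := \beta^{(n+1-i)}$ then yields the required ordering $c = t_{\beta_1}\cdots t_{\beta_n}$.

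The only point needing care is that the explicit product formula for $\prod(f-\{\alpha\})$ is legitimate. This requires that the block decomposition~\eqref{eq:phi_decomp} used to define $\prod$ is intrinsic to the vertices, so that a subface inherits the induced order. It also requires that the choice of order inside each block is irrelevant because the reflections there commute. Both facts are immediate from the definitions, so I expect no real obstacle beyond the bookkeeping of reversing the index order at the end.
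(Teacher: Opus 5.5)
Your proposal is correct and follows the paper's own proof. Both arguments order the vertices of the facet via Proposition~\ref{prop:tzanaki} so that $c=t_{\gamma_1}\cdots t_{\gamma_n}$, identify $\underline{f-\{\alpha\}}$ as the conjugate reflection $t_{\gamma_n}\cdots t_{\gamma_{k+1}}t_{\gamma_k}t_{\gamma_{k+1}}\cdots t_{\gamma_n}$, and telescope. Your explicit index reversal $\beta_i:=\beta^{(n+1-i)}$ is more careful than the paper, which indexes $\beta_i$ by $\alpha_i$ and so ends with $t_{\beta_1}\cdots t_{\beta_n}=c^{-1}$; the order has to be reversed to get $c$ as stated.
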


\begin{proof}
    By Proposition~\ref{prop:tzanaki}, the vertices of $f$ can be ordered as $\alpha_1,\dots,\alpha_n$ so that $c = t_{\alpha_1} \cdots t_{\alpha_n}$.  We have:
    \begin{align*}
        \underline{f-\{\alpha_i\}}
        &=
        \big(\textstyle\prod (f-\{\alpha_i\}) \big)^{-1} c 
        =
        \big(t_{\alpha_1} \cdots t_{\alpha_{i-1}} t_{\alpha_{i+1}} \cdots t_{\alpha_n} \big)^{-1} t_{\alpha_1} \cdots t_{\alpha_n} \\
        &= t_{\alpha_n} \cdots t_{\alpha_{i}} \cdots 
        t_{\alpha_n}.
    \end{align*}
    It follows that $\underline{f-\{\alpha_i\}} = \{e, t_{\alpha_n} \cdots t_{\alpha_{i}} \cdots t_{\alpha_n}\}$ (this is a rank 1 parabolic subgroup containing $\underline{f-\{\alpha_i\}}$). So, let us define $t_{\beta_i} = t_{\alpha_n} \cdots t_{\alpha_{i}} \cdots t_{\alpha_n} $.  We get: $t_{\beta_1} \cdots t_{\beta_n} = t_{\alpha_n} \cdots t_{\alpha_1} = c^{-1}$.
\end{proof}

\begin{lemm}
    Let $\FFF \in \Aut(\CPF^{(m)})$ such that $p\circ\FFF = p$.  If the rank of $W$ is at least 2, there exists $u \in W$ such that $\FFF$ is the action of $u$ on $\CPF^{(m)}$.
\end{lemm}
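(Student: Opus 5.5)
Since $\FFF$ is a simplicial automorphism with $p\circ\FFF = p$, it maps each facet $(f,\{w\})$ to a facet $(f,\{\phi_f(w)\})$. Here $\phi_f$ is a permutation of $W$ that depends on the cluster $f$. Faces of $\CPF^{(m)}$ are determined by their vertex sets, so $\FFF$ is determined by its action on facets; the plan is to work entirely with the maps $\phi_f$. The goal is to show that $\phi_f(w) = uw$ for a single $u\in W$, independent of $f$ and $w$. Once this holds on facets, it holds on all faces: a face $(g, wW_{\underline g})$ is the intersection of the facets containing it, and left multiplication by $u$ is compatible with this.

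\textbf{Step 1: right-equivariance within a fixed cluster.} Fix a cluster $f$ and $\alpha\in f$, with $\beta=\beta(f,\alpha)$ as in the discussion above. By $p\circ\FFF=p$, the codimension-one face $(f-\{\alpha\},\{w,wt_\beta\})$ goes to a face over $f-\{\alpha\}$, and this face contains $(f,\{\phi_f(w)\})$. It must therefore be $(f-\{\alpha\},\{\phi_f(w),\phi_f(w)t_\beta\})$, where $\beta$ is the same because it depends only on $f$ and $\alpha$. Since this face also contains $(f,\{\phi_f(wt_\beta)\})$, we get
\[
    \phi_f(wt_\beta) = \phi_f(w)t_\beta .
\]
By Lemma~\ref{lemm:betagen}, the reflections $t_{\beta_1},\dots,t_{\beta_n}$ associated with $f$ have product $c^{\pm1}$. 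I would then argue that they generate $W$: they form a reduced $T$-factorization of a Coxeter element, and a standard fact is that such reflections generate $W$, since the parabolic closure of $c$ is $W$. Hence $\phi_f$ commutes with right multiplication by all of $W$, so $\phi_f(w) = u_f w$ with $u_f := \phi_f(e)$.

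\textbf{Step 2: independence of $u_f$ on $f$.} Let $f' = (f-\{\alpha\})\cup\{\alpha'\}$ be an adjacent cluster, and let $\beta$ be as above. The facets $(f,\{e\})$ and $(f',\{e\})$ share the ridge $(f-\{\alpha\},\{e,t_\beta\})$. Applying $\FFF$, the facets $(f,\{u_f\})$ and $(f',\{u_{f'}\})$ share the ridge $(f-\{\alpha\},\{u_f,u_ft_\beta\})$. This gives $u_{f'}\in\{u_f,u_ft_\beta\}$, which alone does not settle the question. To conclude, I would use a second ridge: $(f,\{t_\beta\})$ and $(f',\{e\})$ are also adjacent through the same ridge. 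Combined with the information at another index $\alpha_2\in f\cap f'$, where the ridge is $(f-\{\alpha_2\},\cdot)$ with reflection $\beta_2$, this gives $u_{f'}\in u_f\{e,t_{\beta_2}\}$ as well. When $t_\beta\neq t_{\beta_2}$, the two constraints force $u_{f'}=u_f$. This step uses rank at least $2$, so that $f\cap f'$ is nonempty. If the reflections happen to coincide for that choice of $\alpha_2$, I would pick a different $\alpha_2$; distinctness should follow because the $\beta_i$ of $f'$ also multiply to a Coxeter element, so they are distinct.

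\textbf{Step 3: conclusion.} The facet graph of $\Gamma^{(m)}$ is connected, since $\Gamma^{(m)}$ is a pseudomanifold (shellable). It follows that $u_f=u$ is constant, and hence $\FFF$ is the action of $u$.

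I expect Step 2 to be the main obstacle: the ridge argument must be applied carefully to the images of facets, and one has to check that the reflections attached to $(f',\alpha_2)$ and $(f',\alpha')$ separate $u_f$ from $u_ft_\beta$.
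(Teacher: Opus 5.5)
Your Steps 1 and 3 match the paper's argument. The gap is in Step 2.

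\textbf{The constraint on $u_{f'}$ is not derived.} The claim $u_{f'}\in u_f\{e,t_{\beta_2}\}$ does not follow from any adjacency. A ridge over $f-\{\alpha_2\}$ lies only in facets over clusters containing $f-\{\alpha_2\}$. The cluster $f'$ is not one of them, since $\alpha\notin f'$. Every ridge shared by a facet over $f$ and a facet over $f'$ lies over $f\cap f'=f-\{\alpha\}$ and carries the same reflection $t_\beta$.

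Your ``second ridge'' through $(f,\{t_\beta\})$ and $(f',\{e\})$ does not help. It is the same face as before, so its image is again $(f-\{\alpha\},\{u_f,u_ft_\beta\})$ and it returns the same constraint $u_{f'}\in\{u_f,u_ft_\beta\}$.

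Distinctness of $t_\beta$ and $t_{\beta_2}$ is also not the relevant condition. If you run the shared-ridge argument from $(f,\{t_{\beta_2}\})$ and $(f',\{t_{\beta_2}\})$, what you actually get is $u_{f'}\in u_f\{e,\,t_{\beta_2}t_\beta t_{\beta_2}\}$. That is useless when $t_{\beta_2}$ commutes with $t_\beta$.

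\textbf{The missing idea.} Use the ridge over $f-\{\alpha\}$ for \emph{every} $w$, together with Step 1 applied to both $f$ and $f'$; this is what the paper does. For each $w\in W$, the facets $(f,\{w\})$ and $(f',\{w\})$ share the ridge $(f-\{\alpha\},\{w,wt_\beta\})$. After applying $\FFF$, the facet $(f',\{u_{f'}w\})$ must contain $(f-\{\alpha\},\{u_fw,u_fwt_\beta\})$. Hence $u_f^{-1}u_{f'}\in\{e,\,wt_\beta w^{-1}\}$ for all $w$.

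If $u_{f'}\neq u_f$, all conjugates of $t_\beta$ would coincide, so $t_\beta$ would be central. That is impossible for (irreducible) $W$ of rank at least $2$. So the rank hypothesis enters through non-centrality of $t_\beta$, not through $f\cap f'\neq\emptyset$. In rank $1$, $W=\{e,s\}$ is abelian and the $u_f$ can be chosen independently at each vertex $f$, which is exactly why the lemma fails there.
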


\begin{proof}
    Let's fix a facet $f\in \Gamma^{(m)}$.  We will first prove: 
    \begin{align} \label{eq:FFF1}
        \forall w\in W, \; \FFF(f,\{w\}) = (f,\{uw\}),
    \end{align}
    for some $u\in W$ that we can define by $\FFF(f,\{e\}) = (f,\{u\})$. This is true for $w=e$ by definition, and we show that if this is true for $w$ it is for $wt_\beta$ too (for each value of $\beta$).  Let's fix $\alpha$ and $\beta$.  Note that there is only one facet of the form $(f,\{w'\})$ which is adjacent to $(f,\{w\})$ through $(f-\{\alpha\},wW_{\underline{f-\{\alpha\}}})$, namely $(f,\{wt_\beta\})$.  Since $\FFF$ is an automorphism, it follows that $\FFF(f,\{wt_\beta\})$ is the unique facet of the form $(f,\{w'\})$ which is adjacent to $\FFF(f,\{w\}) = (f,\{uw\})$ through $\FFF(f-\{\alpha\},wW_{\underline{f-\{\alpha\}}}) = (f-\{\alpha\},uwW_{\underline{f-\{\alpha\}}})$.  So $\FFF(f,\{wt_\beta\}) = (f,\{uwt_\beta\})$.  This shows that~\eqref{eq:FFF1} is also true for $wt_\beta$.  A consequence of Lemma~\ref{lemm:betagen} is that $t_{\beta_1},\dots,t_{\beta_n}$ generate $W$, and this completes the proof of~\eqref{eq:FFF1}.

    It remains to prove that the element $u$ defined via~\eqref{eq:FFF1} actually doesn't depend on $f$.  Let $f' = (f-\{\alpha\})\cup \alpha'$ be a neighbor facet of $f$ (and $\beta$ as before).  We clearly have $\FFF(f-\{\alpha\},\{w,wt_\beta\}) = (f-\{\alpha\},\{u w , u w t_\beta\})$.  Since $\FFF$ preserves incidence, we have either $\FFF(f',\{w\}) = (f',\{u w\})$ or $\FFF(f',\{w\}) = (f',\{u w t_\beta\})$.  But we already know that~\eqref{eq:FFF1} holds with $f'$ and some $u'$ in place of $f$ and $u$.  Since the rank of $W$ is at least 2, $W$ is noncommutative and it is straightforward to see that the only coherent possibility is $\FFF(f',\{w\}) = (f',\{u w\})$ for all $w$.  Now, the result follows from connectedness of the adjacency graph of facets of $\Gamma^{(m)}$ (which holds because $\Gamma^{(m)}$ is a shellable complex).
\end{proof}

\begin{proof}[Proof of Theorem~\ref{theo:extend}]
    It remains only to prove the last statement about other pairs $(\RRR',\SSS')$.  Suppose that $(\RRR',\SSS')$ satisfy the same conditions as $(\RRR,\SSS)$ in the theorem.  

    We have $p\circ \RRR'\circ \RRR^{-1} = \RR \circ p \circ \RRR^{-1} = \RR \circ \RR^{-1} \circ p = p$.  By the previous lemma (using the hypothesis that the rank is at least 2), $\RRR'\circ \RRR^{-1}$ acts on $\CPF^{(m)}$ as the action of some $u\in W$.  But $\RRR'\circ \RRR^{-1}$ commutes with the action of $W$ by hypothesis on $\RRR$ and $\RRR'$.  It follows that $u$ is central.  Similarly, $\SSS^{-1} \circ \SSS'$ is the action of a central element $u'$.  
\end{proof}

\section{\texorpdfstring{The $q,t$-dihedral sieving phenomenon}{The q,t-dihedral sieving phenomenon}}
\label{sec:dihedralsieving}

In this section, we mostly focus on the case where $m=1$ and $W$ is the symmetric group $\mathfrak{S}_n$.  (The exception is Section~\ref{sec:rotations} where we deal with any $W$.)

\subsection{Signs} We begin this section by discussing signs.  In particular, we define the character $\Sigma$ and thus complete the statement of the conjectures in the introduction.

A general principle that guides us in understanding signs is the following.  By~\cite{baclawskibjorner}, evaluating of the reduced homology character gives the reduced Euler characteristic of a fixed-point subposet:
\[
    \Lambda(\FFF, \sigma)
        =
    (-1)^{n-1} \tilde\chi(\CPF_n^{(\FFF, \sigma)})
\]
where $\CPF_n^{(\FFF,\sigma)}$ is the subposet of fixed points in $\CPF_n$ under the action of $(\FFF,\sigma)$ (seen as the face poset of a simplicial complex, so that its reduced Euler characteristic is well-defined).  Let us mention that these character evaluations are usually called {\it Lefschetz numbers}. (This is the origin of the notation $\Lambda$. Note that a common definition of Lefschetz numbers in the literature gives the evaluation of $(-1)^{n-1}\Lambda$ in our notation.)

In general, $\CPF_n^{(\FFF,\sigma)}$ is not a subcomplex: a face can be a fixed point while its vertices are permuted in a nontrivial way.  Still, this subposet is expected to be the face poset of a Cohen-Macaulay simplicial complex, so that the sign of its reduced Euler characteristic is given by the parity of its dimension.  We call them fixed-point subcomplexes (and conjecture the property of being the face poset of a Cohen-Macaulay simplicial complex).

\begin{defi} \label{def:sigma}
    The character $\Sigma$ of $\dih_{n+2}$ is defined on the generators by:
    \begin{align*}
        \Sigma (\SSS) &= (-1)^{|\Pi_\bullet|},\\
        \Sigma (\SSS\RRR) &= (-1)^{|\Pi_\circ|}.
    \end{align*}
    (We identify $\dih_{n+2}$ with $\langle \RRR,\SSS\rangle$.)  The extension of $\Sigma$ to $\dih_{n+2}\times\mathfrak{S}_n$ is also denoted by $\Sigma$.
\end{defi}

In the proof of Proposition~\ref{prop:springer}, we will explicitly describe the fixed-point subcomplex of $\SSS$ and it will be clear that the sign of $\Lambda(\SSS,1_W)$ is related with $\Sigma(\SSS)$.

\begin{rema}
    It's unclear what should be the analog of $\Sigma$ in the ``Fuß'' case.  
\end{rema}

\begin{rema}
    For $\sigma\in\mathfrak{S}_n$, let $\ell_e(\sigma)$ (respectively, $\ell_o(\sigma)$) denote the number of cycles of even length (respectively, odd length) in $\sigma$.  It is an {\it empirical observation} that 
    \[
        \varDelta(\sigma)\big\vert_{q=-t=1}
    \]
    has the same sign as $(-1)^{\ell_e(\sigma)+\frac{n-\ell_o(\sigma)}{2}}$.  In particular, it is positive when $\sigma$ is the identity (indeed, $\varDelta(e)\big\vert_{q=-t=1} = E_n$, the $n$th Euler number).  It's tempting to think about what should be the Coxeter theoretic analog (for example, $\ell_e(\sigma)$ is the multiplicity of $-1$ as an eigenvalue in the geometric representation) but we have no available data to test any conjecture in this direction.
\end{rema}





\subsection{Rotations}
\label{sec:rotations}

We consider the case $m=1$, and a finite Coxeter group $W$.  Let $\CPF = \CPF^{(1)}$ denote the complex of cluster parking functions in this case.  Note that $\Sigma$ restricts to a sign character of the rotation subgroup $\rot_{h+2} \subset \dih_{h+2}$.

The bigraded character $\varDelta$ of Gordon's ring dramatically simplifies at $t=q^{-1}$ (see~\cite[Section~5]{gordon}): 
\begin{equation} \label{eq:spec_vardelta}
    \varDelta(w) \Big\vert_{t=q^{-1}}
        =
    q^{-nh/2} \cdot \epsilon(w) \cdot \frac{\chi_w(q^{h+1})}{\chi_w(q)}
\end{equation}
where $\chi_w(q) := \det(w-q1_W)$ is the characteristic polynomial of $w \in W$ in the geometric representation.  This is the relevant specialization, since a rotation in $\dih_{h+2}$ has eigenvalues $e^{2i\pi/d}$ and its inverse $e^{-2i\pi/d}$ ($d$ is a divisor of $h+2$ and the order of the rotation).

\begin{prop}
Let $d$ be a divisor of $h+2$ and $\mu = e^{2i\pi / d}$.  We have:
\begin{equation} \label{eq:spec_vardelta2}
    \varDelta(w) \Big\vert_{q= \mu =t^{-1}}
        =
    (-1)^n \cdot \mu^{-\frac{n(h+2)}{2}} \cdot (-h-1)^{\dim \ker( w - \mu 1_W) }.
\end{equation}
\end{prop}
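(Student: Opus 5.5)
We plan to start from the specialization \eqref{eq:spec_vardelta} and take the limit $q\to\mu$. The delicate point is that $\chi_w(q^{h+1})/\chi_w(q)$ may be a $0/0$ expression at $q=\mu$. We will therefore work eigenvalue by eigenvalue. Let $\zeta_1,\dots,\zeta_n$ be the eigenvalues of $w$ in the geometric representation. Then
\[
    \frac{\chi_w(q^{h+1})}{\chi_w(q)} = \prod_{j=1}^n \frac{\zeta_j - q^{h+1}}{\zeta_j - q}.
\]
Since $\mu^{h+2}=1$, we have $\mu^{h+1}=\mu^{-1}$. Hence for $q=\mu$ each factor becomes $(\zeta_j-\mu^{-1})/(\zeta_j-\mu)$, whenever this is not an indeterminate form.

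The factors split into two classes.

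\textbf{Eigenvalues $\zeta_j=\mu$.} The factor has the form $0/0$. Writing $q=\mu+\varepsilon$, the numerator is $\mu-q^{h+1}\approx -(h+1)\mu^{h}\varepsilon$ and the denominator is $-\varepsilon$. The limit is therefore $(h+1)\mu^h=(h+1)\mu^{-2}$. The number of such factors is $\dim\ker(w-\mu1_W)$, because $w$ is diagonalizable.

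\textbf{Eigenvalues $\zeta_j\neq\mu$.} Here we pair eigenvalues using the fact that $w$ is real, so its eigenvalues are stable under $\zeta\mapsto\bar\zeta=\zeta^{-1}$.
\begin{itemize}
    \item If $\zeta_j=\mu^{-1}\ne\mu$, the numerator vanishes. Its conjugate $\zeta_k=\mu$ occurs with the same multiplicity. Each such vanishing factor is then compensated by the corresponding $0/0$ factor above, and we must recompute that pair jointly.
    \item For eigenvalues different from $\mu^{\pm1}$, pair $\zeta$ with $\zeta^{-1}$. The product of the two factors is a value we can compute directly using $|\zeta|=1$. Indeed, $(\zeta-\mu^{-1})/(\zeta-\mu)$ has modulus $1$ and equals $-\zeta\mu^{-1}\,\overline{(\zeta-\mu)}/(\zeta-\mu)$. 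In the product over the full conjugation-stable multiset, these phases collapse.
\end{itemize}

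\textbf{Main obstacle.} Cleanly bookkeeping the roots of unity. The eigenvalue $\mu^{-1}$ causes a genuine zero in the numerator unless it is matched. A cleaner route, which we would try first, is to avoid factor-by-factor limits. We use the factorization $\chi_w(q)=\prod_i(q^{d_i}-\ldots)$ only heuristically. Instead we use Springer's theory: $\dim\ker(w-\mu1_W)$ is controlled by the degrees, and ratios $\chi_w(q^{h+1})/\chi_w(q)$ evaluate at $\mu$ to $\prod_j \lim (\zeta_j-q^{h+1})/(\zeta_j-q)$ as above. Then we verify that the total product is $\mu^{-2k}(h+1)^k\cdot\prod_{\zeta_j\ne\mu}(\ldots)$, with $k=\dim\ker(w-\mu1_W)$.

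\textbf{Assembling the answer.} The remaining step is to combine this product with $\epsilon(w)=\prod_j\zeta_j^{\pm}$, using $\epsilon(w)=\det w$, and with the prefactor $q^{-nh/2}=\mu^{-nh/2}$. We will show that the unimodular phases, together with $\det w$, reduce to $(-1)^{n}\mu^{-n(h+2)/2}(-1)^k\mu^{2k}$. Combined with the $0/0$ contributions, this gives $(-1)^n\mu^{-n(h+2)/2}(-h-1)^k$. As a sanity check we will test the formula at $w=1_W$, where every eigenvalue equals $1$, and at $\mu=1$, where the formula reduces to the known value $\epsilon(w)(h+1)^{\dim\Fix(w)}$.
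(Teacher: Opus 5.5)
Your overall strategy (expand $\chi_w(q^{h+1})/\chi_w(q)$ over the eigenvalues of $w$, pair $\zeta$ with $\zeta^{-1}$, let $q\to\mu$) matches the paper's. However, the computation fails at exactly the delicate point, and the final assembly is asserted rather than derived.

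\textbf{The factor at $\zeta_j=\mu$.} When $\mu\neq\pm1$, the factor $(\zeta_j-q^{h+1})/(\zeta_j-q)$ is not of the form $0/0$. Its numerator tends to $\mu-\mu^{h+1}=\mu-\mu^{-1}\neq 0$, so the factor has a simple pole at $q=\mu$. Your expansion $\mu-q^{h+1}\approx-(h+1)\mu^h\varepsilon$ is really the expansion of $\mu^{-1}-q^{h+1}$, so $(h+1)\mu^{-2}$ is not the limit of that factor. It is also contradictory to say that the zero coming from $\zeta_k=\mu^{-1}$ is ``compensated'' by a factor with a finite limit.

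\textbf{The pair $\{\mu,\mu^{-1}\}$.} This pair must be evaluated jointly. Its value $-(h+1)\mu^{-2}$, with its extra sign, never appears in your argument. It only resurfaces as the factor $(-1)^k\mu^{2k}$ that you announce you ``will show'', which is the target formula read backwards.

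\textbf{The remaining phases.} Your claim that $(\zeta-\mu^{-1})/(\zeta-\mu)$ has modulus $1$ and equals $-\zeta\mu^{-1}\overline{(\zeta-\mu)}/(\zeta-\mu)$ is false in general. The identity forces $\mu^2=1$. The modulus is $1$ only when $\zeta$ or $\mu$ is real; for instance it fails for $\mu=i$, $\zeta=e^{i\pi/3}$. So the ``collapse of phases'' is not established. Moreover, the real eigenvalues $\pm1$ have no distinct partner and are never treated. They are the source of the sign $(-1)^{\dim\Fix(w)}$ that must cancel against $\epsilon(w)$. The detour through Springer theory and degrees plays no role.

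\textbf{The missing idea.} What you need is the swap of numerators inside each pair, as done in the paper. For $\lambda\neq\pm1$, write the contribution of $\lambda,\lambda^{-1}$ as $\frac{q^{h+1}-\lambda^{-1}}{q-\lambda}\cdot\frac{q^{h+1}-\lambda}{q-\lambda^{-1}}$. Since $\mu^{h+1}=\mu^{-1}$, each new numerator vanishes at $q=\mu$ exactly when its own denominator does. Hence every factor has a finite nonzero limit:
\begin{itemize}
\item a generic pair gives $\bigl(-\frac{1}{\lambda\mu}\bigr)\bigl(-\frac{\lambda}{\mu}\bigr)=\mu^{-2}$;
\item the pair $\{\mu,\mu^{-1}\}$ gives $(h+1)\mu^{h}\cdot(-1)=(-h-1)\mu^{-2}$;
\item the unpaired eigenvalue $\lambda=1$ gives $-\mu^{-1}$ if $\mu\neq1$, and $h+1=(-h-1)(-\mu^{-1})$ if $\mu=1$;
\item the unpaired eigenvalue $\lambda=-1$ gives $\mu^{-1}$ if $\mu\neq-1$, and $(-h-1)\mu^{-1}$ if $\mu=-1$ (then $h$ is even).
\end{itemize}
Collecting these, the ratio tends to $\mu^{-n}(-1)^{\dim\Fix(w)}(-h-1)^{\dim\ker(w-\mu 1_W)}$. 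Multiplying by $\mu^{-nh/2}$ and $\epsilon(w)=(-1)^{n-\dim\Fix(w)}$ gives the stated formula.
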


\begin{proof}
    We have to evaluate the right-hand side of~\eqref{eq:spec_vardelta}.  We have:
    \[
        \frac{\chi_w(q^{h+1})}{\chi_w(q)}
            =
        \prod_\lambda \frac{q^{h+1}-\lambda}{q-\lambda}
    \]
    where the product is over the eigenvalues $\lambda$ of $w$.  We distinguish different cases.  First consider an eigenvalue $\lambda \neq \pm 1$, so that $\lambda^{-1}$ is a distinct eigenvalue. The two corresponding factors give
    \[
        \frac{q^{h+1}-\lambda}{q-\lambda} \cdot 
        \frac{q^{h+1}-\lambda^{-1}}{q-\lambda^{-1}}
    \]
    that we rewrite as
    \begin{equation} \label{eq:twofactors}
        \frac{q^{h+1}-\lambda^{-1}}{q-\lambda} \cdot 
        \frac{q^{h+1}-\lambda}{q-\lambda^{-1}}.
    \end{equation}
    \begin{enumerate}[label=(\roman*)]
        \item If $\lambda \notin \{\mu,\mu^{-1}\}$, plugging $q=\mu$ in~\eqref{eq:twofactors} (and using $\mu^{h+1} = \mu^{-1}$) gives  a factor
        \[
            \frac{\mu^{-1}-\lambda^{-1}}{\mu-\lambda} \cdot \frac{\mu^{-1}-\lambda}{\mu-\lambda^{-1}} 
                =
            (-\tfrac{1}{\lambda\mu}) \cdot (-\tfrac{\lambda}{\mu})
                =
            \mu^{-2}.
        \]
        \item If $\lambda \in \{ \mu, \mu^{-1} \}$, by taking the limit $q\to \mu$ in~\eqref{eq:twofactors} we get:
        \[
            (h+1) \mu^h \cdot \frac{\mu^{-1}-\mu}{\mu-\mu^{-1}} 
                = 
            (-h-1) \mu^{-2}.
        \]
    \end{enumerate}
    Now, consider the factors coming from eigenvalues $\lambda = 1$ and $\lambda = -1$. 
    \begin{enumerate}[label=(\roman*),start=3]
        \item If $\lambda=1$ and $\mu \neq 1$, $\frac{q^{h+1}-1}{q-1} \to \frac{\mu^{-1}-1}{\mu-1} = -\mu^{-1}$.
        \item If $\lambda= \mu = 1$, $\frac{q^{h+1}-1}{q-1} \to h+1$, that we write $-(-h-1)$.
        \item If $\lambda=-1$ and $\mu \neq -1$, $\frac{q^{h+1}+1}{q+1} \to \frac{\mu^{-1}+1}{\mu+1} = \mu^{-1}$.
        \item If $\lambda=\mu = -1$ (note that $h$ is then even), $\frac{q^{h+1}+1}{q+1} \to (h+1)\cdot (-1)^h = h+1$, that we write $(-h-1) \mu^{-1}$.
    \end{enumerate}
    To gather the factors, observe that we have:
    \begin{itemize}
        \item a factor $\mu^{-1}$ for each eigenvalue of $w$ (sometimes coming in pairs, in case (i) and (ii)), so $\mu^{-n}$ in total,
        \item a factor $(-h-1)$ for each eigenvalue $\mu$ of $w$,
        \item there remains only to take into account a factor $-1$ in cases (iii) and (iv), i.e., when $\lambda=1$.  In total, this gives $(-1)^{\dim\Fix(w)}$. 
    \end{itemize}
    So, the right-hand side of~\eqref{eq:spec_vardelta} at $q=\mu$ is:
    \[
        \mu^{-nh/2-n} \cdot \epsilon(w) \cdot (-1)^{\dim\Fix(w)} \cdot (-h-1)^{\dim \ker( w - \mu 1_W) }.
    \]
    Then, recall that we have
    \[
        \epsilon(w) = (-1)^{n-\dim\Fix(w)}.
    \]
    So we get the announced formula.
\end{proof}

\begin{lemm}
    Let $d$ be a divisor of $h+2$.  Let $\mu$ be a root of $1$ of order $d$, and $\FFF \in \rot_{h+2}$ be a rotation of order $d$. We have:
    \begin{align}
        \mu^{-\frac{n(h+2)}{2}} = \Sigma(\FFF).
    \end{align}
\end{lemm}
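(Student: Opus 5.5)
The plan is to reduce both sides to a sign that depends only on $d$ and on the parity of $n(h+2)/d$, and then compare.

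First I evaluate the left-hand side. Since $\mu^d=1$, I have $\mu^{-\frac{n(h+2)}{2}} = (\mu^{d/2})^{-n(h+2)/d}$, and $\mu^{d/2}=e^{i\pi}=-1$ (here $n(h+2)/2$ is an integer when $d$ is odd, and I interpret the exponent via $e^{2i\pi/d}$ as in the preceding proposition). Concretely, if $\mu=e^{2i\pi k/d}$ with $\gcd(k,d)=1$, then
\[
    \mu^{-\frac{n(h+2)}{2}} = e^{-i\pi k n (h+2)/d} = (-1)^{k n (h+2)/d}.
\]
When $n(h+2)/d$ is even this is $1$. Otherwise $d$ divides $h+2$ and $n(h+2)/d$ is odd; if $d$ is even then $k$ is odd, and if $d$ is odd I will check that $n(h+2)/d$ is still an integer whose parity controls the answer. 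So the left side equals $(-1)^{n(h+2)/d}$ in all cases, which I confirm by splitting into $d$ odd and $d$ even.

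Next I evaluate $\Sigma(\FFF)$. Since $\Sigma$ is a character of $\dih_{h+2}$, its restriction to $\rot_{h+2}=\langle\RRR\rangle$ is determined by
\[
    \Sigma(\RRR)=\Sigma(\SSS)\Sigma(\SSS\RRR)=(-1)^{|\Pi_\bullet|+|\Pi_\circ|}=(-1)^n .
\]
A rotation of order $d$ is $\FFF=\RRR^{j}$ with $\gcd(j,h+2)=(h+2)/d$, that is $j=k(h+2)/d$ with $\gcd(k,d)=1$. Hence $\Sigma(\FFF)=(-1)^{nk(h+2)/d}$.

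It then remains to match the two expressions $(-1)^{nk(h+2)/d}$ on both sides. The only subtle point is that the exponent $k$ on the $\Sigma$ side must agree in parity with $k$ on the $\mu$ side, or at least that it does not matter. If $d$ is even, $\gcd(k,d)=1$ forces $k$ odd on both sides. If $d$ is odd, I need $n(h+2)/d\equiv n(h+2)\pmod 2$ to get independence from $k$ via $(-1)^{n(h+2)k/d}$. Concretely, when $d$ is odd, $k$ can be replaced by $k+d$, so only $n(h+2)/d$ times a freely adjustable parity matters. I then need $n(h+2)/d$ to be even whenever $d$ is odd, which follows if $n(h+2)$ is even, i.e.\ $nh$ is even. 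This holds since $nh=2|\Phi_+|$.

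The main obstacle is this careful handling of the branch of $\mu^{-n(h+2)/2}$ and of the independence from the choice of primitive root and generator. I would resolve it by the parity fact $nh=2|\Phi_+|$, which gives $(-1)^{n(h+2)/d}=1$ for odd $d$ on both sides.
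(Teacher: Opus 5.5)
Your proof is correct and is essentially the paper's argument: both rest on $\Sigma(\RRR)=\Sigma(\SSS)\Sigma(\SSS\RRR)=(-1)^n$ together with a direct evaluation of $\mu^{-n(h+2)/2}$. Your case split on the parity of $d$, using $nh=2|\Phi_+|$, spells out why the answer does not depend on which primitive $d$th root $\mu$ and which rotation $\FFF$ of order $d$ are chosen, a point the paper compresses into ``other cases follow by taking powers thereof''. One remark in your opening paragraph is misleading: $n(h+2)/2=|\Phi_+|+n$ is always an integer, not only when $d$ is odd, and $\mu^{d/2}=-1$ is meaningless for odd $d$, so that paragraph is unnecessary; your ``concretely'' computation supersedes it.
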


\begin{proof}
    We only need to consider the case where $\mu = e^{\frac{2i\pi}{h+2}}$ and $\FFF = \RRR$ ({\it i.e.}, $d=h+2$), since other cases follow by taking powers thereof.  In this case, the left-hand-side is
    \[
        e^{-\frac{2i\pi}{h+2} \cdot \frac{n(h+2)}{2}}
            =
        e^{-i\pi n }
            =
        (-1)^n,
    \]
   and the right-hand side is
    \[
        \Sigma(\RRR) = \Sigma(\SSS) \cdot \Sigma(\SSS\RRR)  =  (-1)^{|\Pi_\bullet|} \cdot (-1)^{|\Pi_\circ|} = (-1)^n,
    \]
    since $|\Pi_\bullet| + |\Pi_\circ| = n$.
\end{proof}

The previous proposition and previous lemma give a reformulation of our conjecture in the case of rotations.

\begin{conj}[$\Lambda\Sigma = \Delta$, the case of rotations] \label{conj:rotcox}
    Let $\FFF$ be a rotation (a power of $\RRR$), and $\mu$ a root of $1$ with the same order as $\FFF$.  We have:
    \begin{equation} \label{eq:charrot}
        \Lambda(\FFF,w)
            =
        (-1)^n \cdot (-h-1)^{\dim\ker(w-\mu 1_W)}.
    \end{equation}
\end{conj}

\begin{rema}
    The ``Fuß'' analog of the previous conjecture is straightforward: take $\CPF^{(m)}$ and $-mh-1$ in place of $\CPF$ and $-h-1$.
\end{rema}

Since we have a simple explicit formula in the right-hand side, it is easy to test this case of the conjecture and one can imagine a case-by-case proof based on the finite type classification.

A confirmation of this conjecture would establish the existence of {\it cluster parking spaces} as an analog of the noncrossing parking spaces reviewed in Section~\ref{sec:parking}.  An important difference is that the cyclic group has order $h+2$ (or $mh+2$ in the Fuß case).  In the perspective of~\cite{armstrongreinerrhoades}, some natural questions are:
\begin{itemize}
    \item Besides clusters, does there exist other combinatorial construction of parking spaces having the same character ?  (In the same way as noncrossing parking spaces are related with nonnesting parking spaces, for example.)
    \item Is there an algebraic parking space, defined via homogeneous system of parameters, having the same character as the cluster parking spaces ?  This could lead to analogs of the strong/intermediate conjectures from~\cite{armstrongreinerrhoades} for cluster parking spaces. (About this, see Appendix~\ref{sec:app}.)
\end{itemize}

While these are quite speculative, the computations that lead to Conjecture~\ref{conj:rotcox} might be a clue that it's worth investigating these directions.

\begin{rema}
    To be precise, Conjecture~\ref{conj:rotcox} is only an analog of Conjecture~\ref{conj2} ($q,t$-dihedral sieving, {\it i.e.}, cyclic sieving in the present case).  To have an exact analog of Conjecture~\ref{conj1} ($\Lambda\Sigma=\Delta$), one needs to check that the operator $\RRRR$ from~\eqref{eq:defRRRR} is well-defined in the context of the rational Cherednik algebra and it simple module of dimension $(h+1)^n$ (which gives the appropriate generalization of diagonal coinvariants), so that there is indeed a character $\Delta$ of $\rot_{h+2}\times W$ beyond type $A_n$.  We will not do this here.
\end{rema}

\subsection{Reflections}

We first focus on the case of the symmetric group, where the character $\varDelta$ can be described through $q,t$-combinatorics.  More precisely, its Frobenius characteristic is $\nabla e_n$ (the image the $n$th elementary symmetric function $e_n$ under the Bergeron-Garsia operator $\nabla$) and it has an explicit expansion in terms of monomial symmetric functions given by the shuffle theorem~\cite{carlssonmellit}.  Using this, the values $\varDelta(\sigma)\vert_{t=-q=1}$ can be obtained via a computer for small $n$.  This confirmed that $\Lambda\Sigma = \Delta$ holds for $\mathfrak{S}_n$ if $n\leq 11$.

Let us now take the perspective from~\cite{corteeljosuatvergesvandenwyngaerd}, using the point of view of symmetric functions.  Let $e_\lambda$, $h_\lambda$, $p_\lambda$ respectively denote the elementary, homogeneous, power-sum symmetric functions.  We showed in~\cite{corteeljosuatvergesvandenwyngaerd} that
\[
    \langle \nabla e_n \mathrel{|} p_{1^n} \rangle \big\vert_{t=-1}
\]
is a nice $q$-analog of the $n$th Euler number $E_n$. (We also got other combinatorial results when replacing $\nabla$ with other operators.)  In particular, $\varDelta(1_{\mathfrak{S}_n})\big\vert_{t=-1,q=1} = E_n$.  This result relies on the schedule formula by Haglund and Sergel~\cite{haglundsergel}.  We don't know how to generalize this to the case of $\langle \nabla e_n | p_{\lambda} \rangle \big\vert_{t=-1,q=1}$, so let us just outline a few ideas. The polynomial $\langle \nabla e_n | p_{1^n} \rangle$ is the generating function of parking functions with respect to two well-known statistics: area and dinv (diagonal inversions).  The schedule formula gives a way to gather some terms (by taking parking functions with the same reading word) and get a power of $t$ times a product of $q$-integers, a quantity that simplifies nicely at $t=1$ and $q=-1$.  See~\cite{corteeljosuatvergesvandenwyngaerd,haglundsergel} for details.  A possible extension is the following.  The shuffle theorem gives $\langle \nabla e_n | h_{\lambda} \rangle$ as a generating functions of certain word parking functions with respect to area and dinv.  There is a so-called fermionic formula (see Haglund~\cite{haglund}) that generalizes the schedule formula: gathering word parking functions with the same reading word gives a power of $t$ times a product of $q$-binomial coefficients, so that the evaluation at $t=1$ and $q=-1$ is straightforward.  Eventually, this means that  
\[
    \langle \nabla e_n \mathrel{|} h_{\lambda} \rangle \big\vert_{t=1,\,q=-1}
\]
can be written as a (positive) combinatorial sum where each term is a product of binomial coefficients.  However, using this to get $\langle \nabla e_n | p_{\lambda} \rangle \big\vert_{t=1,q=-1}$ in an explicit way seems to be a formidable task.

We end this section by going back to the case of finite Coxeter groups, but in a very particular case.  

Let $S_\circ := \{t_\alpha \;:\; \alpha \in \Pi_\circ \} \subset S$ and $S_\bullet := S - S_\circ$.  Springer~\cite{springer} showed that the descent classes with maximal cardinality are those associated to $S_\circ$ and $S_\bullet$.  This cardinality is now called the {\it Springer number} of $W$, and denoted by $S_W$.

\begin{prop} \label{prop:springer}
    We have 
    \begin{align} \label{eq:sspringer0}
        \Lambda(\SSS , 1_W )
            &=
        (-1)^{|\Pi_\bullet| } S_W, \\
        \Lambda(\SSS\RRR , 1_W )
            &=
        (-1)^{|\Pi_\circ| } S_W.        
    \end{align}
\end{prop}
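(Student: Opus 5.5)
By the Baclawski–Björner principle recalled at the start of this section, $\Lambda(\SSS,1_W) = (-1)^{n-1}\tilde\chi(\CPF^{(\SSS,1_W)})$. So the plan is to describe the fixed-point subposet of $\SSS$ explicitly and compute its reduced Euler characteristic by an alternating sum. The case of $\SSS\RRR$ is the same argument with $\mu_f$, \eqref{eq:SR} and $c_\circ$ in place of $\lambda_f$, \eqref{eq:f'S} and $c_\bullet$. Exchanging the roles of $\Pi_\circ$ and $\Pi_\bullet$ should account for the different sign.

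\textbf{Step 1: the fixed faces.} By \eqref{eq:defSSS} and \eqref{eq:rel_para2}, a face $(f,wW_{\underline f})$ is fixed by $\SSS$ iff $\SS(f)=f$ and $wW_{\underline f}\lambda_f = wW_{\underline f}$. The second condition is equivalent to $\lambda_f\in W_{\underline f}$ and does not depend on $w$. The fixed subposet is therefore
\[
\{(f,wW_{\underline f}) \;:\; \SS(f)=f,\ \lambda_f\in W_{\underline f}\},
\]
and each admissible $f$ contributes $[W:W_{\underline f}]$ faces. Hence
\[
\tilde\chi = -1+\sum_{f}(-1)^{\dim f}\,[W:W_{\underline f}],
\]
summed over $\SS$-stable faces $f$ with $\lambda_f=\prod_{\alpha\in\Pi_\circ-f}t_\alpha\in W_{\underline f}$. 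In the dimension count, a fixed face of $\CPF$ should be taken with its dimension in the fixed-point complex. I would check that this dimension is governed by the number of $\SS$-orbits of vertices, and correct the signs accordingly.

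\textbf{Step 2: classify the admissible $f$.} $\SS$ fixes $-\Pi_\circ$ pointwise and otherwise acts by \eqref{eq:defS}. By \eqref{eq:f'S}, an $\SS$-stable face is determined by $f_{(0)}$ together with a "half" of $f$ satisfying $f_{(-i)}=c_\bullet f_{(i)}c_\bullet$. The condition $\lambda_f\le\underline f$ in absolute order says that the reflections of $\Pi_\circ-f_{(0)}$ lie below the Kreweras complement of $\prod f$. Using Proposition~\ref{prop:tzanaki}, I would rewrite $\prod f$ as $f_{(0)}$ times a $c_\bullet$-symmetric product, and aim to show that the admissible faces, graded by their orbit data, biject with pairs (standard parabolic $W_K$ with $K\supseteq S_\circ$, i.e.\ $K=S_\circ\cup J$ with $J\subseteq S_\bullet$, and a chain of data contributing a sign) in such a way that $W_{\underline f}$ is conjugate to $W_K$.

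\textbf{Step 3: match with Solomon's formula.} If Step 2 works, the alternating sum becomes an inclusion–exclusion of the form
\[
\pm\sum_{J\subseteq S_\bullet}(-1)^{|J|}\,[W:W_{S_\circ\cup J}].
\]
By Solomon's formula, this counts the elements of $W$ whose descent set is exactly $S_\circ$, which is $S_W$ by Springer's theorem. Any extra admissible faces should cancel in pairs through a sign-reversing involution that toggles a vertex of $-\Pi_\circ$ in $f$. The overall sign $(-1)^{|\Pi_\bullet|}$ then comes from the number of free orbit choices, in agreement with $\Sigma(\SSS)$.

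The main obstacle is Step 2: controlling exactly which $\SS$-stable faces satisfy $\lambda_f\in W_{\underline f}$, and proving that their contributions collapse to the Solomon sum rather than merely having the same total. I would first test this in type $A$ on small polygons, where $\SSS$ is a reflection of the $(n+2)$-gon and $S_W=E_n$, before attempting the Coxeter-theoretic argument.
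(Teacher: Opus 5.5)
\textbf{There is a genuine gap: Step~2 is the entire content of the proposition and you leave it as a goal, and the shape you guess for it is not what happens.} Your outer framework is right: Baclawski--Bj\"orner, fixed faces are those with $\SS(f)=f$ and $\lambda_f\in W_{\underline f}$, then an alternating sum of indices, inclusion--exclusion, and Springer.

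The missing fact is that the fixed subposet of $\SSS$ is exactly $\{(f,wW_{\underline f}) : f\subseteq -\Pi_\circ\}$. Every such $f=f_I$ (with $I\subseteq S_\circ$) is fixed. Indeed $\lambda_{f_I}=\prod_{s\in S_\circ\setminus I}s$ and $\underline{f_I}=\bigl((\prod_{s\in I}s)\,c_\bullet\,(\prod_{s\in I}s)\bigr)\cdot\lambda_{f_I}$, with reflection lengths $|S_\bullet|+|S_\circ\setminus I|=\ell(\underline{f_I})$, so $\lambda_{f_I}\le\underline{f_I}$.

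Conversely, suppose $\SSS$ fixes $(f,wW_{\underline f})$. Then it must send each vertex $(\{\alpha\},wW_{\underline{\{\alpha\}}})$ to $(\{\SS\alpha\},wW_{\underline{\{\SS\alpha\}}})$, which by \eqref{eq:rel_para2} amounts to $\lambda_{\{\alpha\}}\in W_{\underline{\{\alpha\}}}$. For $\alpha\notin-\Pi_\circ$ this says $c_\circ\in W_{t_\alpha c}$. If $u\neq 0$ spans $\Fix(t_\alpha c)$, then $u\perp\Pi_\circ$ gives $c_\bullet u-u=t_\alpha u-u\neq 0$, so $\alpha\in\operatorname{span}\Pi_\bullet$, i.e.\ $\alpha\in\pm\Pi_\bullet$. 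But for $m=1$, $\SS$ swaps $\beta$ and $-\beta$ for $\beta\in\Pi_\bullet$, and these two never lie in a common face.

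So every fixed face has all its vertices fixed, and the fixed poset is an honest subcomplex. Neither your orbit-count correction of dimensions nor a sign-reversing involution is needed, because there are no extra faces to cancel. Beware that $\SS$ alone can fix vertices outside $-\Pi_\circ$ (e.g.\ $e_1-e_4$ in type $A_3$ with $\Pi_\bullet=\{e_2-e_3\}$). It is the coset condition $\lambda_f\in W_{\underline f}$, not $\SS(f)=f$, that excludes them.

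The second missing ingredient is the identification of $W_{\underline{f_I}}$. The element $c_\circ\,\underline{f_I}\,c_\circ=(\prod_{s\in S_\circ\setminus I}s)\,c_\bullet$ is a standard Coxeter element of $W_{S\setminus I}$, so $[W:W_{\underline{f_I}}]=[W:W_{S\setminus I}]$. With $\dim f_I=|I|-1$ (the empty face giving the $-1$),
\[
\Lambda(\SSS,1_W)=(-1)^{n-1}\sum_{I\subseteq S_\circ}(-1)^{|I|-1}[W:W_{S\setminus I}]=(-1)^{|\Pi_\bullet|}\sum_{I\subseteq S_\circ}(-1)^{|S_\circ\setminus I|}[W:W_{S\setminus I}],
\]
which is $(-1)^{|\Pi_\bullet|}$ times the size of the descent class of $S_\circ$.

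Hence the parabolics that occur contain $S_\bullet$, not $S_\circ$ as in your guess $W_{S_\circ\cup J}$. Your sum over $J\subseteq S_\bullet$ would count the descent class of $S_\bullet$ rather than $S_\circ$. The two cardinalities agree via $w\mapsto w_0w$, but the mismatch shows the bookkeeping was not pinned down. For $\SSS\RRR$, the paper avoids redoing the computation with $\mu_f$: for $m=1$, exchanging $\Pi_\circ$ and $\Pi_\bullet$ exchanges $\SSS$ and $\SSS\RRR$.
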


\begin{proof}
    Let's prove the first identity. The other follows because in the case $m=1$, the maps $\SSS$ and $\SSS\RRR$ are exchanged upon exchanging $\Pi_\circ$ and $\Pi_\bullet$.

    Let us first prove that the fixed points under $\SSS$ are the faces $(f,w W_{\underline{f}})$ where $f \subset \Pi_0 = -\Pi_\circ$.  We only need to check that the fixed point set of $\SS$ is $-\Pi_\circ$.  From Definition~\ref{eq:defS}, the only nontrivial case is that $c_\bullet(\alpha) \neq \alpha$ if $\alpha\in \Phi_+ \backslash \Pi_\bullet$.  A root $\alpha$ with $c_\bullet(\alpha)=\alpha$ would be orthogonal to the simple roots in $\Pi_\bullet$, and we let the reader check that this is not possible.      
    
    We get:
    \[
        \Lambda(\SSS , 1_W )
            =
        (-1)^{n-1} \tilde\chi\big( \CPF^{(\SSS,1_W)} \big)
            =
        (-1)^{n-1}
        \sum_{f \subset -\Pi_\circ} (-1)^{\dim(f)} | W/W_{\underline{f}}|.  
    \]
    Let $I \subset S_\circ$ and $f_I:=\{\alpha\in -\Pi_\circ \;:\; t_\alpha\in I\} $.  By definition of the Kreweras complement, we have:
    \[ 
        \underline{f_I}
            =
        (\prod f_I)^{-1} c
            =
        (\prod_{s\in I} s) \cdot c_\bullet \cdot c_\circ,
    \]
    so
    \[
        c_\circ \underline{f_I} c_\circ
            =
        (\prod_{s \in S_\circ - I} s) \cdot c_\bullet
    \]
    which is a standard Coxeter element of the standard parabolic subgroup $W_{S\backslash I}$.  This means that $W_{\underline{f_I}}$ is conjugate to $W_{S\backslash I}$.  So, we can rewrite the previous equality as:
    \[
        \Lambda(\SSS , 1_W )
            =
        (-1)^{n-1} \sum_{I \subset S_\circ} (-1)^{|I|-1} | W/W_{S\backslash I} |
            =
        (-1)^{|\Pi_\bullet|}
        \sum_{I \subset S_\circ} (-1)^{|S_\circ \backslash I|} | W/W_{S\backslash I} |.
    \]
    The number $| W/W_{S\backslash I} |$ can be interpreted as the number of elements of $W$ with descent set included in $I$.  The latter sum is thus an inclusion-exclusion to compute to cardinality of the descent class of $S_\circ$. This completes the proof of~\eqref{eq:sspringer0}.
\end{proof}


Using Definition~\ref{def:sigma} for the values of $\Sigma$, the previous proposition also gives
\[
    (\Lambda\Sigma)(\FFF,1_W) = S_W
\]
for any reflection $\FFF\in \langle \RRR,\SSS\rangle$.  This leads us to the following:

\begin{conj}[A very particular case of the $\Lambda\Sigma = \Delta$ conjecture]
    The bigraded character $\varDelta$ of Gordon's ring $R_W$ is such that
    \[
        \varDelta(1_W) \big\vert_{t=-q=1}
            =
        S_W.
    \]
\end{conj}

Studying the representation theory of rational Cherednik algebras could provide a proof of the previous statement.  This particular case is a good test to see if there's hope for $\Lambda\Sigma = \Delta$ in whole generality.

The previous conjecture has been confirmed in dihedral types by direct examination (in fact, we can use explicit computations of Boij and Geramita~\cite{boijgeramita} to check the full conjecture $\Lambda\Sigma=\Delta$).  It has been checked for $B_3$ using the {\it operator conjecture} from~\cite{haiman}.  More precisely, we obtain
\begin{align*}
    \varDelta(1_{B_3}) 
        = \,
    &q^9 + q^8t + q^7t^2 + q^6t^3 + q^5t^4 + q^4t^5 + q^3t^6 + q^2t^7 + qt^8 + t^9 + 3q^8 + 3q^7t + 3q^6t^2 +  \\
    & 3q^5t^3 + 3q^4t^4 +3q^3t^5 + 3q^2t^6 + 3qt^7 + 3t^8 + 5q^7 + 6q^6t + 6q^5t^2 + 6q^4t^3 + 6q^3t^4 + \\
    & 6q^2t^5 + 6qt^6 + 5t^7 + 7q^6 + 10q^5t + 10q^4t^2 + 10q^3t^3 + 10q^2t^4 + 10qt^5 + 7t^6 + 8q^5 + \\
    &  14q^4t + 14q^3t^2 + 14q^2t^3 + 14qt^4 + 8t^5 + 8q^4 + 15q^3t + 15q^2t^2 + 15qt^3 + 8t^4 + 7q^3 + \\ 
    & 12q^2t + 12qt^2 + 7t^3 + 5q^2 + 8qt + 5t^2 + 3q + 3t + 1,
\end{align*}
so that
\[
    \varDelta(1_{B_3})
    \big\vert_{t=-1,\, q=1}
        =
    11
\]
which is indeed the Springer number of type $B_3$.  As for $B_4$, the operator conjecture almost holds: the recipe gives a space of dimension $9^4-2$ instead of the expected $9^4$.  The bigraded Hilbert series of this space is a polynomial with 153 terms, that we don't reproduce here.  It turns out that the only missing term to get $\varDelta(1_{B_4})$ is $2t^4q^4$, which has been confirmed by computer calculations (using the definition in Appendix~\ref{app:rca}, and an AI-generated Sagemath program).  This allowed us to get
\[
    \varDelta(1_{B_4})
    \big\vert_{t=-1,\, q=1}
        =
    57
\]
which is indeed the Springer number of type $B_4$.  Similar computations give
\[
    \varDelta(1_{D_4})
    \big\vert_{t=-1,\, q=1}
        =
    23
\]
which is indeed the Springer number of type $D_4$.

\begin{rema}
    Leaving cluster parking functions aside, our computations suggest that in some cases,
    \[
        \varDelta(1_{W}) \big\vert_{t=-1}
    \]
    is a $q$-analog of the Springer number $S_W$ with positive coefficients and leading term $q^{nh/2}$.  This is known in type $A$ by~\cite{corteeljosuatvergesvandenwyngaerd}, and straightforward to check in dihedral types (for $I_2(k)$, we get $q^2\cdot\frac{1-q^{k-1}}{1-q}$).  The previous computations give:
    \begin{align*}
        \varDelta(1_{B_3}) \big\vert_{t=-1} &=
        q^9 + 2q^8 + 3q^7 + 3q^6 + 2q^5, \\
        \varDelta(1_{B_4}) \big\vert_{t=-1}
            &=
        q^{16} + 3q^{15} + 6q^{14} + 9q^{13} + 11q^{12} + 11q^{11} + 9q^{10} + 5q^{9} + 2q^{8}.  
    \end{align*}
    It would be highly interesting to confirm this positivity property and get a combinatorial (or geometric, or algebraic) interpretation of these $q$-Springer numbers of type $B$.  But the property doesn't hold in type $D$, where we get
    \begin{align*}
        \varDelta(1_{D_4}) \big\vert_{t=-1} &=
        q^{12} + 3q^{11} + 6q^{10} + 8q^{9} + 7q^{8} + 2q^{7} - 2q^{6} - 2q^{5}.
    \end{align*}\end{rema}

\section{Final remarks 
 and perspective}
\label{sec:final}

Our main technical contribution here is the definition of the dihedral automorphism group of cluster parking functions.  We hope that our conjectures will lead to interesting developments, and finish this article by a few remarks that we hope to be relevant in the sequel.
\begin{itemize}
    \item In the case of the symmetric groups (and $m=1$), it is likely that there exists an explicit generating function for the collection of numbers $\Lambda(\SSS,\sigma)$ (with $n\geq 0$, $\sigma\in\mathfrak{S}_n$).  It should be a symmetric functions given in terms of the homogeneous symmetric functions $h_\lambda$.  It is then reasonable to expect that symmetric functions theory will complete the proof of $\Lambda\Sigma = \Delta$.
    \item The first case to examine beyond symmetric groups is that of dihedral groups.  The $q,t$-character of diagonal coinvariants is known is a fully explicit way, see~\cite{boijgeramita}.  It is thus straightforward to check the identity $\Lambda\Sigma = \Delta$.
    \item We mentioned in the introduction the possibility of dihedral sieving phenomenon on each set $\Gamma^{(m)}_{\mathcal{C}}$.  We hope that these (in the case $m=1$, to begin with) will follow from $\Lambda\Sigma = \Delta$ by taking coefficients in an appropriate basis. 
    \item Let $\mathbb{Z}_k$ denote the cyclic group of order $k$.  The dihedral group $\dih_{h+2}$ is a semidirect product $\mathbb{Z}_{h+2} \rtimes \mathbb{Z}_2$. From the point of view of diagonal coinvariants, it is much more natural to consider the action of the direct product $\mathbb{Z}_{h+2} \times \mathbb{Z}_2$: instead of $\SSSS$ from the introduction, define 
    \[
        \SSSS'(X_i) = X_i, \qquad 
        \SSSS'(Y_i) = -Y_i,
    \]
    so that $\langle \RRRR,\SSSS' \rangle \simeq\mathbb{Z}_{h+2} \times \mathbb{Z}_2$.  Now, $\Delta$ becomes a character of $\mathbb{Z}_{h+2} \times \mathbb{Z}_2 \times \mathfrak{S}_n$.  But the relation $\Lambda \Sigma = \Delta$ still makes sense, because the characters of $\mathbb{Z}_{h+2} \rtimes \mathbb{Z}_2$ embed in those of $\mathbb{Z}_{h+2} \times \mathbb{Z}_2$ (upon identifying the underlying sets of the two groups).  This might be the correct path to state Conjecture~\ref{conj1} for finite Coxeter groups (where $\Delta$ should be defined via a module of the rational Cherednik algebra).
\end{itemize}


\appendix

\section{Toward an algebraic cluster parking space}
\label{sec:app}

Let $k$ and $n$ be positive integers.  We will build natural representations of $\mathbb{Z}_k \times \mathfrak{S}_n$ analogous to the right-hand side of~\eqref{eq:charrot}, using roots of unity.  It is interesting to put this in perspective of the theory of parking spaces~\cite{armstrongreinerrhoades}, which will be done at the end of this section.

\begin{rema}
    In this appendix, we consider the natural representation $\mathfrak{S}_n \hookrightarrow \operatorname{GL}(n,\mathbb{C})$ and the spectrum of a permutation is defined accordingly.  So, any quantity $\dim\ker(\sigma-\mu I)$ below should be thought in terms of this representation.  This differs from the results about parking functions above, where we considered the natural geometric representation (which has rank $n-1$ in the case of $\mathfrak{S}_n$).  This discrepancy will also be discussed at the end of this section.
\end{rema}

Let us define:
\begin{align*}
    \bU_k &:= \bigg\{ e^\frac{2i\pi u}{k} \; : \; 0\leq u <k  \bigg\}  \\[2mm]
    \bV_k &:= \bU_k \cup \{0\} \\[2mm]
    \bW_k &:= \bigg\{ f \in \mathbb{C}^{\bU_{k+2}} \;:\; \sum_{z \in \bU_{k+2}} f(z) = 0 \bigg\}.
\end{align*}
We identify the cyclic group $\mathbb{Z}_k$ with $\mathbb{U}_k$ in a natural way.  The set $\bV_k$ has cardinality $k+1$, and $\bW_k$ is a complex vector space of dimension $k+1$.  Note that $\bU_k$ acts on $\bV_k$ by multiplication, and $\bU_{k+2}$ acts on $\bW_k$ by
\[
    (\mu \cdot f)(z) = f(\mu z)
\]
for $(\mu,f) \in \bU_{k+2} \times \bW_k $. 

Now, $\bU_k \times \mathfrak{S}_n$ acts on the power set $\bV_k^n$ by:
\[
    (\mu,\sigma) \cdot ( z_1, \dots, z_n) 
        =
    ( \mu z_{\sigma^{-1}(1)}, \dots, \mu z_{\sigma^{-1}(n)}),
\]
{\it i.e.}, by the diagonal action of $\bU_k$ together with the permutation action of $\mathfrak{S}_n$.  

\begin{prop}[{\cite[Section~6.2]{armstrongreinerrhoades}}]
    The character of $\bU_k \times \mathfrak{S}_n$ acting on $\bV_k^n$ is 
    \[
        (\mu,\sigma) \mapsto (k+1)^{\dim\ker(\sigma - \mu I)}.
    \]
\end{prop}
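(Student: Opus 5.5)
This is a permutation representation, so its character at $(\mu,\sigma)$ is the number of points of $\bV_k^n$ fixed by $(\mu,\sigma)$. The plan is to count these fixed tuples $(z_1,\dots,z_n)$, i.e.\ those with $z_{\sigma(i)} = \mu z_i$ for all $i$, cycle by cycle.

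First I decompose $\sigma$ into disjoint cycles. The fixed-point condition only relates coordinates within a single cycle, so the fixed set is a product over cycles, and it suffices to treat one cycle $(i_1\, i_2 \cdots i_\ell)$ of length $\ell$. Along this cycle the condition reads $z_{i_{j+1}} = \mu z_{i_j}$. Hence the whole cycle is determined by $z_{i_1}$, subject to the closing condition $\mu^\ell z_{i_1} = z_{i_1}$.

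There are two cases for a given cycle. If $\mu^\ell \neq 1$, the only solution is $z_{i_1}=0$, which gives one fixed assignment. If $\mu^\ell = 1$, then any $z_{i_1}\in\bV_k$ works, and the orbit stays in $\bV_k$ because $\mu\in\bU_k$ and $0$ is preserved. This gives $k+1$ fixed assignments. The character is therefore $(k+1)^{N}$, where $N$ is the number of cycles of $\sigma$ whose length $\ell$ satisfies $\mu^\ell=1$.

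It remains to check that $N=\dim\ker(\sigma-\mu I)$ in the permutation representation $\mathbb{C}^n$. The space $\mathbb{C}^n$ splits as a direct sum over cycles. On a cycle of length $\ell$, $\sigma$ acts as a cyclic shift with eigenvalues the $\ell$th roots of unity, each with multiplicity one. So $\mu$ contributes exactly one dimension to $\ker(\sigma-\mu I)$ for that cycle when $\mu^\ell=1$, and zero otherwise. The only point needing care is this identification with the permutation representation rather than the geometric one, which is exactly what the preceding remark stipulates.
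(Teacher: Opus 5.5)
Your proposal is correct and follows essentially the same route as the paper. Both count fixed points cycle by cycle: a cycle of length $\ell$ contributes $k+1$ if $\mu^\ell=1$ and $1$ otherwise. Both then identify the exponent using the fact that the spectrum of $\sigma$ in the permutation representation is the union of the $\bU_{\ell}$ over its cycle lengths $\ell$.
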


Let us sketch a proof as a warm-up for the next proposition.

\begin{proof}
    This amounts to check that the number of fixed points under the action of $(\mu,\sigma)$ is 
    \[
        (k+1)^{\dim\ker(\sigma - \mu I)}.
    \]
    Let $(u_1,\dots,u_\ell)$ be a cycle of $\sigma$.  For $( z_1, \dots, z_n)$ to be fixed by $(\mu,\sigma)$, we need to have
    \[
        z_{u_i} 
            = 
        \mu z_{u_{i-1}} 
    \]
    for $1\leq i \leq \ell$ (and indices are modulo $\ell$).  This implies $z_{u_i} = \mu^{\ell} z_{u_i}$, by $\ell$ iterations.  There are two situations:
    \begin{itemize}
        \item If $\mu^{\ell} \neq 1$, the only possibility is $z_{u_1} = \dots = z_{u_\ell} = 0$.
        \item Otherwise ({\it i.e.}, the order of $\mu$ divides $\ell$), there are $k+1$ choices for $z_{u_1}$, and $z_{u_i} = \mu z_{u_{i-1}} $ forces the other values $z_{u_2}, z_{u_3}$, {\it etc}.
    \end{itemize}
    So, we get a power of $k+1$ and it remains to identify the exponent.  Let us represent the cyclic type of $\sigma$ by an integer partition $(\lambda_1,\dots,\lambda_m)$ of $n$. The spectrum of $\sigma$ is, as a multiset:
    \[
        \bigcup_{i=1}^m \bU_{\lambda_i}.
    \]
    It follows, denoting by $d$ the order of $\mu$:
    \[
        \#\big\{ i \;:\; d\mathrel{|}\lambda_i \big\}
            =
        \dim\ker(\sigma-\mu I).
    \]
    Eventually, the number of fixed point is $(k+1)^{\dim\ker(\sigma-\mu I)}$.
\end{proof}

Now, let us turn to the case of $\bW_k$.  

\begin{prop} \label{prop:algparkessai}
    The character of $\bU_{k+2} \times \mathfrak{S}_n$ acting on $\bW_k^{\otimes n}$ is
    \[
        (\mu, \sigma)
            \mapsto
        (-1)^n \cdot \epsilon(\sigma) \cdot (-k-1)^{\dim\ker(\sigma-\mu I)}.
    \]
\end{prop}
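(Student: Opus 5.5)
The plan is to compute the trace of $(\mu,\sigma)$ on $\bW_k^{\otimes n}$ directly, cycle by cycle, as in the warm-up proof of the previous proposition. I take the action of $\bU_{k+2}\times\mathfrak{S}_n$ on $\bW_k^{\otimes n}$ to be the diagonal action of $\bU_{k+2}$ on each tensor factor, together with the permutation of tensor factors by $\mathfrak{S}_n$, with no extra sign.

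The first step is the standard trace formula for a permuted tensor power. If $A$ is an operator on a vector space $E$ and $\sigma$ has cycles of lengths $\ell_1,\dots,\ell_m$, then the operator $A^{\otimes n}\circ P_\sigma$ on $E^{\otimes n}$ satisfies
\[
    \operatorname{tr}\big(A^{\otimes n}\circ P_\sigma\big) = \prod_{i=1}^m \operatorname{tr}\big(A^{\ell_i}\big).
\]
Since $A^{\otimes n}$ commutes with $P_\sigma$, it suffices to check this for a single cycle $(u_1,\dots,u_\ell)$ acting on $E^{\otimes \ell}$. There one expands in a basis $(e_j)$ of $E$: the diagonal coefficient of $e_{j_1}\otimes\dots\otimes e_{j_\ell}$ is a product of matrix entries of $A$ arranged cyclically, and summing over all indices gives $\operatorname{tr}(A^\ell)$. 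Applying this with $A$ the action of $\mu$ reduces the problem to the factors $\operatorname{tr}_{\bW_k}(\mu^{\ell_i})$.

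The second step is to compute the character of $\bU_{k+2}$ on $\bW_k$. The space $\mathbb{C}^{\bU_{k+2}}$ with the action $(\nu\cdot f)(z)=f(\nu z)$ is the regular representation of $\bU_{k+2}$. The linear form $f\mapsto\sum_z f(z)$ is invariant, and its kernel $\bW_k$ therefore has an invariant complement, namely the constant functions, which carry the trivial representation. So $\bW_k$ is the regular representation minus the trivial one, and for $\nu\in\bU_{k+2}$ we get $\operatorname{tr}_{\bW_k}(\nu)=k+1$ if $\nu=1$, and $-1$ otherwise. Hence the factor attached to a cycle of length $\ell$ is $k+1$ when $\mu^\ell=1$, that is, when the order $d$ of $\mu$ divides $\ell$, and $-1$ otherwise.

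The last step is bookkeeping. Let $a$ be the number of cycles of $\sigma$ whose length is divisible by $d$. As shown in the previous proof, $a=\dim\ker(\sigma-\mu I)$. The trace equals
\[
    (k+1)^a(-1)^{m-a}=(-1)^m(-k-1)^a ,
\]
and $(-1)^m=(-1)^n\epsilon(\sigma)$ because $\epsilon(\sigma)=(-1)^{n-m}$. This gives the announced formula. I do not expect a serious obstacle. The only delicate points are justifying the cyclic-trace identity cleanly and fixing the convention for the action, namely that there is no sign twist on the permutation of tensor factors; the factor $\epsilon(\sigma)$ then appears naturally from the $-1$ contributions.
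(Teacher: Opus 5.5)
Your proof is correct and is essentially the paper's argument. The paper diagonalizes the action of $\mu$ in the basis $z\mapsto z^i$ ($1\le i\le k+1$) of $\bW_k$, so that $(\mu,\sigma)$ acts monomially and the trace factors over the cycles of $\sigma$ into sums $\sum_{i=1}^{k+1}\mu^{\ell i}$. Those sums are exactly your factors $\operatorname{tr}_{\bW_k}(\mu^{\ell})$, and the paper evaluates them as $k+1$ or $-1$ using $\sum_{i=0}^{k+1}\nu^i=0$ for $\nu\neq 1$, which is your ``regular minus trivial'' observation.

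One small point of justification: your cyclic-trace identity reduces to a single cycle because $A^{\otimes n}\circ P_\sigma$, after regrouping tensor factors along the cycles, is a tensor product of operators on the blocks, and trace is multiplicative on tensor products. The fact that $A^{\otimes n}$ commutes with $P_\sigma$ is not what gives this reduction.
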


\begin{proof}
A basis of $\bW_k$ is given by the functions $f(z) = z^i$ for $1\leq i \leq k+1$ (for example, by the finite Fourier transform).  So, a basis of the tensor power $\bW_k^{\otimes n}$ is given by the elements
\[
    z^{\otimes I} := z^{i_1} \otimes \dots \otimes z^{i_n},
\]
where $I = (i_1,\dots,i_n) \in \{1,\dots,k+1\}^n$ is a multi-index. 

The action of $\mu\in\bU_{k+2}$ on the basis $(z^i)_{1 \leq i \leq k+1}$ is diagonal, since
\[
    \mu \cdot z^i = (\mu z)^i = \mu^i z^i.
\]
It follows that the action of $\bU_{k+2}$ on the tensor power $\bW_k^{\otimes n}$ is also diagonal, namely:
\[
    \mu \cdot z^{\otimes I} 
        =
    (\mu^{\sum I}) z^{\otimes I}
\]
where $\sum I = \sum_{j=1}^n i_j$.  On the other side, the natural action of $\mathfrak{S}_n$ permutes the basis elements:
\[
    \sigma \cdot z^{\otimes I}
        =
    z^{\otimes (\sigma\cdot I)}
\]
where $\sigma \cdot I = (i_{\sigma^{-1}(1)}, \dots, i_{\sigma^{-1}(n)} )$.  All in all, we see that the action of $\bU_{k+2} \times \mathfrak{S}_n$ on $\bW_k^{\otimes n}$ is monomial:
\[
    (\mu,\sigma) \cdot z^{\otimes I}
        =
    (\mu^{\sum I}) z^{\otimes (\sigma\cdot I)}.
\]
We thus get the trace of $(\mu,\sigma)$ as a sum:
\[
    \operatorname{Tr}(\mu,\sigma)
        =
    \sum_{\substack{I \in \{1,\dots,k+1\}^n, \\ \sigma\cdot I = I}} \mu^{\sum I}.
\]
Let us represent the cyclic type of $\sigma$ by an integer partition $(\lambda_1,\dots,\lambda_\ell)$ of $n$.  The sum can be rewritten:
\begin{align*}
    \operatorname{Tr}(\mu,\sigma)
        &=
    \sum_{(i_1,\dots,i_\ell) \in \{1,\dots,k+1\}^\ell} \mu^{\lambda_1 i_1 + \dots + \lambda_\ell i_\ell} \\[2mm]
        &=
    \prod_{u=1}^\ell \bigg(
        \sum_{i_u = 1}^{k+1} \mu^{\lambda_u i_u}.
    \bigg)
\end{align*}
To evaluate the sum between parentheses, note that:
\begin{itemize}
    \item if $\mu^{\lambda_u} = 1$ ({\it i.e.}, the order of $\mu$ divides $\lambda_u$), the sum is $k+1$,
    \item otherwise, $\mu^{\lambda_u}$ is a $(k+2)$nd root of unity different from $1$, so that 
    \[
        \sum_{i_u = 0}^{k+1} \mu^{\lambda_u i_u}
            = 
        0,
    \]
    and the sum from $1$ to $k+1$ is $-1$.
\end{itemize}
So we get:
\begin{align*}
    \operatorname{Tr}(\mu,\sigma)
        &=
    (-1)^{\ell - \dim\ker(\sigma-\mu I)} (k+1)^{\dim\ker(\sigma-\mu I)}.
\end{align*}
It remains to notice that $\epsilon(\sigma) = (-1)^{n-\ell}$.
\end{proof}

Now, let us briefly explain the idea of algebraic parking spaces from~\cite{armstrongreinerrhoades}.  Let us go back to the case of a finite real reflection group $W$ and its geometric representation $V$.  Let $x_1,\dots,x_n$ be a basis of $V$ and note that the action of $W$ naturally extends to polynomials in $x_1,\dots,x_n$.  A {\it homogeneous system of parameters} of degree $h+1$ is a sequence $\theta_1,\dots,\theta_n$ of homogeneous polynomials of degree $h+1$, such that the map $x_i \mapsto \theta_i$ commutes with the action of $W$ ($h$ is the Coxeter number of $W$).  Roughly speaking, the algebraic parking space associated to such $\theta_1,\dots,\theta_n$ is the set of solutions in $V\otimes \mathbb{C}$ of the system of equations $(x_i=\theta_i)_{1\leq i\leq n}$.  It is naturally endowed with an action of $\bU_h \times W$, and the character of this action is 
\[
    (\mu,w) \mapsto (h+1)^{\dim\ker(w-\mu I)}.
\]
See~\cite{armstrongreinerrhoades} for details.

In type $B_n$, the set $\bV_k^n$ for $k=2n+1$ is such an algebraic parking space, defined from $\theta_i = x_i^{2n+1}$.  In particular, what we considered above is the restriction to $\mathfrak{S}_n \subset \mathfrak{S}_n^\pm = B_n$ and this explains the fact that we considered the representation of $\mathfrak{S}_n$ in dimension $n$ rather than $n-1$.  We let the interested reader check that both propositions in this section can be extended from $\mathfrak{S}_n$ to $\mathfrak{S}_n^{\pm}$.  In fact, a peculiarity in this theory is the difficulty to find $\theta_1,\dots,\theta_n$ in type $A_n$, contrary to types $B$ and $D$ where a very simple and explicit one is given by $\theta_i = x_i^{h+1}$.  

At the end of Section~\ref{sec:rotations}, we asked if there exists an analog of the algebraic parking space that would lead to the character $(\mu,w) \mapsto (-1)^n \cdot (-h-1)^{\dim\ker(w-\mu I)}$.  But Proposition~\ref{prop:algparkessai} suggests to rather consider the character
\[
    (\mu,w) \mapsto (-1)^n \cdot \epsilon(w) \cdot (-h-1)^{\dim\ker(w-\mu I)}. 
\]
We hope that our result about $\bW_k^{\otimes n}$ is a clue toward such a construction, if it exists.

\section{\texorpdfstring{Explicit computations of $\varDelta$}{Explicit computations of ∆}}
\label{app:rca}

The goal of this section is to give a definition of the bigraded character $\varDelta$ which is as concrete as possible.  It has been used to compute some values of $\varDelta(1_W)$ and we hope to exploit this further in the future to get evaluations other than $1_W$.

We allow ourselves to follow notations from Gordon~\cite{gordon}, to which we refer for the general ideas behind this construction.  The general idea is that $\varDelta(1_W)$ is the bigraded Hilbert series of the simple module $L(\mathfrak{h}_0)$ of the rational Cherednik algebra $\mathcal{H}$ (up to tensoring by the sign character).

\newcommand{\h}{\mathfrak{h}}
\renewcommand{\H}{\mathcal{H}}
\newcommand{\C}{\mathbb{C}}

\subsection{Dunkl operators and rational Cherednik algebra}

Let $\h$ denote the reflection representation of $W$, and $\h_i = \bigwedge^i \h$ its $i$th exterior power.  In particular, $\h_0$ is the trivial representation and $\h_1 = \h$.  The {\it rational Cherednik algebra} $\H$ can be defined via a faithful representation in terms of {\it Dunkl operators}.  Let $\C[\h]$ denote the polynomial algebra on $\h$.  As an algebra of endomorphisms of $\C[\h]$, $\H$ is generated by:
\begin{itemize}
    \item elements of $\h$ acting by multiplication, 
    \item the Dunkl operators (whose definition is omitted here), indexed by $\h^*$,
    \item elements of $W$, by extending the action on $\h$ to polynomials on $\h$.
\end{itemize}
Let us mention that the definition of Dunkl operators depends on a parameter, which in the present context is $\frac{h+1}{h}$. 

\subsection{Presentation of the simple module}

A presentation of the simple module $L(\mathfrak{h}_0)$ is given by the exact sequence (which is a part of the Koszul resolution of $L(\mathfrak{h}_0)$):
\[
    M(\mathfrak{h}_1) \to 
    M(\mathfrak{h}_0) \to 
    L(\mathfrak{h}_0) \to 0
\]
where $M(\mathfrak{h}_1)$ and $M(\mathfrak{h}_0)$ are so-called {\it standard modules}.  We can identify $M(\mathfrak{h}_0)$ to the polynomial algebra $\mathbb{C}[\mathfrak{h}]$.  It remains to identify the ideal $I \subset \mathbb{C}[\mathfrak{h}]$ which is the image of $M(\mathfrak{h}_1)$.  From the theory of rational Cherednik algebras, we know that $I$ is generated by a subspace which is:
\begin{itemize}
    \item homogeneous of degree $h+1$,
    \item isomorphic the reflection representation $\mathfrak{h}_1$ under the action of $W$,
    \item in the kernel of all Dunkl operators ({\it i.e.}, it consists in {\it singular polynomials}).
\end{itemize}
(Note that the first two items can be formulated in terms of  homogeneous system of parameters from the previous section.)  

\subsection{Twisted periods and singular polynomials}

In order to make the presentation of $L(\h_0)$ more explicit, we need to describe the space of singular polynomials that generate the ideal $I$.  To do this we use the results of Feigin and Silantyev~\cite{feiginsilantyev}, who proved in particular that the space of singular polynomials that we need is unique.  Moreover, they give explicit construction of this space via {\it twisted periods}.  The idea is that there is a $W$-invariant polynomials $Q\in \mathbb{C}[\h]$ of degree $h+2$, called a twisted period, such that the space of first order derivatives 
\[
    \{ \partial_x Q \;: x\in \h \}
\]
is the generating space for the ideal $I$.  

Explicit constructions (in all possible degrees) have been given in~\cite[Section~5]{feiginsilantyev}.  In type $B_n$, the degree $h+2$ twisted period is given as a residue at $\infty$:
\[
    Q_{B_n} (x_1,\dots,x_n)
        =
    \operatorname{Res}_{z=\infty}
        \prod_{j=1}^n (z^2 - x_j^2)^{\nu} \dd{z}
\]
where $\nu = \frac{h+1}{h} = \frac{2n+1}{2n}$ and $x_1,\dots,x_n$ are the coordinates on which $B_n$ acts by signed permutations. Despite the fractional powers, the function is well-defined around $\infty$ and can be expanded as a series in $\frac{1}{z}$:  
\[
    \prod_{j=1}^n (z^2 - x_j^2)^{\nu} 
        =
    z^{2n\nu} \prod_{j=1}^n \Big(1 - \tfrac{x_j^2}{z^2}\Big)^{\nu} 
        =
    z^{2n+1} \prod_{j=1}^n \bigg(
        \sum_{k_j=0}^{\infty}
        \tbinom{\nu}{k_j}\big(-\tfrac{x_j^2}{z^2} \big)^{k_j} \bigg)
\]
The residue at $\infty$ is the opposite of the coefficient of $\tfrac{1}{z}$.  By expanding the product, this gives (up to a sign):
\[
    Q_{B_n}(x_1,\dots,x_n)
        =
    \sum_{\substack{ k_1,\dots,k_n \geq 0 \\ k_1 + \dots + k_n = n+1}} \prod_{j=1}^n \binom{\nu}{k_j} x_j^{2k_j}.
\]
Note that this polynomial contains a coefficient $\binom{\nu}{2n+2}x_i^{2n+2}$.  This means that $\partial_{x_i} Q_{B_n}$ contains a term in $x_i^{2n}$ and lower-degree terms in $x_i$.  It follows that a basis of $\mathbb{C}[\h]/I$ is given by monomials
\[
    \Big\{
        \prod_{i=1}^n x_i^{\ell_i} 
        \;:\;
        0\leq \ell_1,\dots,\ell_n \leq 2n
    \Big\}.
\]
Note that the number of such monomials is the expected dimension $(h+1)^n = (2n+1)^n$.

Similarly, it is stated in~\cite{feiginsilantyev} that
\[
    Q_{D_n} (x_1,\dots,x_n)
        =
    \operatorname{Res}_{z=\infty}
        z^{-2\nu} \prod_{j=1}^n (z^2 - x_j^2)^{\nu} \dd{z}
\]
where $\nu = \frac{h+1}{h} = \frac{2n-1}{2n-2}$.  This leads to 
\[
    Q_{D_n}(x_1,\dots,x_n)
        =
    \sum_{\substack{ k_1,\dots,k_n \geq 0 \\ k_1 + \dots + k_n = n}} \prod_{j=1}^n \binom{\nu}{k_j} x_j^{2k_j}.
\]
By the same argument, a basis of $\mathbb{C}[\h]/I$ is given by monomials
\[
    \Big\{
        \prod_{i=1}^n x_i^{\ell_i} 
        \;:\;
        0\leq \ell_1,\dots,\ell_n \leq 2n-2
    \Big\}
\]
and the number of such monomials is the expected dimension $(h+1)^n = (2n-1)^n$.

\subsection{Grading and filtration of the simple module}

There is a grading on $L(\h_0)$ coming from a grading on $\H$ (where $\h$ has degree $1$, the Dunkl operators have degree $-1$, and $W$ has degree $0$).  It is a shift of the usual grading of polynomials: the homogeneous component of degree $d$ in $L(\h_0)$, denoted $L(\h_0)_d$, is represented by the homogeneous polynomials $p(x_1,\dots,x_n)$ of degree $d + \frac{nh}{2}$ in $\mathbb{C}[\h]$.  The graded character of $L(\h_0)$ is (in a new variable $s$):
\[
    w \mapsto
    s^{-\frac{nh}{2}}\frac{\det(1_W - s^{h+1} w)}{ \det(1_W - s w)}.
\]

Additionally, there is a filtration on $L(\h_0)$ coming from a filtration on $\H$ (where $\h$ and the Dunkl operators have degree 1, and $W$ has degree $0$).  It is denoted
\[
    L(\h_0) = \bigcup_{i=0}^{nh/2} L(\h_0)^{(i)}. 
\]
The subspace $L(\h_0)^{(0)}$ is the 1-dimensional vector space generated by the {\it discriminant}, which is the polynomial
\[
    \delta(x_1,\dots,x_n) = \prod_{\alpha\in \Phi^+ } x_\alpha \in \mathbb{C}[\h]
\]
where $x_\alpha \in \mathbb{C}[\h]$ naturally corresponds to the positive root $\alpha$.  Then $L(\h_0)^{(i)}$ is defined inductively as the subspace generated by $L(\h_0)^{(i-1)}$ together with its images under the elements of $\h$ and the Dunkl operators.

The grading and the filtration are compatible in the sense that each subspace $L(\h_0)^{(i)}$ is homogeneous for the grading.  The natural Hilbert series in two variables $r$ and $s$ is then:
\[
    \varDelta(r,s) =
    \sum_{i=0}^{nh/2}
    r^i \cdot \operatorname{Hilb}\big(L(\h_0)^{(i)} / L(\h_0)^{(i-1)}, s\big)
\]
where the the Hilbert series in the variable $s$ is with respect to the grading.  To obtain a polynomial $\varDelta(q,t)$ in $q$ and $t$ which corresponds to the customary definition in type $A$ (and which is more natural from the point of view of diagonal coinvariants), we make the subtitution $r=\sqrt{qt}$, $s=\sqrt{q/t}$ to obtain $\varDelta(q,t)$.

In order to make this more computationally efficient, let us state the Poincaré-Birkhoff-Witt theorem: $\H$ is linearly isomorphic to 
\[
    \mathbb{C}[\h] \otimes
    \mathbb{C}[\h^*] \otimes
    \mathbb{C}[W],
\]
and in particular $\H^{(i)}$ (the $i$th term of the filtration) is linearly isomorphic to 
\[
    \bigoplus_{j+k\leq i}
    \mathbb{C}[\h]_j \otimes
    \mathbb{C}[\h^*]_k \otimes
    \mathbb{C}[W].
\]
Since $\delta$ is anti-invariant under the action of $W$, it follows that 
\begin{equation} \label{lh0i}
    L(\h_0)^{(i)}
        =
    \H^{(i)} \delta
        =
    \Big\langle
        p(x)q(y)\delta \;:\;
        p\in \mathbb{C}[\h],\; q\in \mathbb{C}[\h^*], \; \deg(p)+\deg(q)\leq i
    \Big\rangle /I.
\end{equation}

The next step is to restrict the polynomials $p$ and $q$ that are necessary. 

\begin{lemm}
    Suppose that $q(y)$ is a constant-term-free $W$-invariant polynomial.  Then $q(y)\delta = 0$.
\end{lemm}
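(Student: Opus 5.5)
The plan is to work inside $L(\h_0)$, where $\delta$ denotes the image of the discriminant in the quotient $\mathbb{C}[\h]/I$. Here $q(y)$ acts through Dunkl operators. The key structural fact is that the Dunkl operators $D_y$, for $y\in\h^*$, pairwise commute and are $W$-equivariant, so $q\mapsto q(D)$ is an algebra map from $\mathbb{C}[\h^*]$. Moreover, a $W$-invariant $q$ yields an operator $q(D)$ that commutes with the action of $W$. Hence $q(y)\delta$ is again anti-invariant, i.e. it lies in the $\epsilon$-isotypic component of $L(\h_0)$.

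Next I would use the grading. Each Dunkl operator has degree $-1$, so if $q$ is homogeneous of degree $k\geq1$ (by linearity we may assume this), then $q(y)\delta$ is homogeneous of degree $\deg\delta - k$ in the polynomial grading, where $\deg\delta = |\Phi_+| = nh/2$. It therefore suffices to show that the sign-isotypic component of $L(\h_0)$ lives only in polynomial degree $nh/2$, i.e. in graded degree $0$ after the shift, and is spanned by $\delta$.

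To prove this, I would use the graded character recalled above:
\[
    w\mapsto s^{-nh/2}\det(1-s^{h+1}w)/\det(1-sw).
\]
The multiplicity of $\epsilon$ is obtained by averaging this character against $\epsilon$. The standard identity (Molien/Solomon type, as in Gordon~\cite{gordon} or Berest–Etingof–Ginzburg) says that the $\epsilon$-isotypic part of $L(\h_0)$ is one-dimensional and sits in degree $0$ (it corresponds to the trivial part of the tensored-by-sign module). I would cite this rather than recompute it. Alternatively, the computation can be done directly: $\frac{1}{|W|}\sum_w \epsilon(w)\det(1-s^{h+1}w)/\det(1-sw)$ equals $s^{N}\prod_i [d_i+h... ]$-type products that collapse to $s^{nh/2}$ when the parameter is $(h+1)/h$.

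Granting this, $q(y)\delta$ is anti-invariant of degree strictly less than $nh/2$, hence zero. An even more elementary route for low degrees also works. $D_y\delta$ is anti-invariant of degree $nh/2-1$, and any anti-invariant polynomial is divisible by $\delta$, so it vanishes in $\mathbb{C}[\h]$ itself, not just modulo $I$. The same holds for $q(D)\delta$ for every $q$ with $\deg q\geq1$: anti-invariance of $q(D)\delta$ in $\mathbb{C}[\h]$ already forces it to be a multiple of $\delta$, which is impossible in lower degree unless it is zero. This second argument avoids the character computation entirely, so I would present it as the main proof. The only point to check carefully is the $W$-equivariance of Dunkl operators, which ensures $q(D)$ commutes with $W$ for invariant $q$.
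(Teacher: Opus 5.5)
Your proposal is correct and your main (second) argument is essentially the paper's proof. $W$-invariance of $q$ together with equivariance of the Dunkl operators makes $q(y)\delta$ anti-invariant; anti-invariants lie in the ideal generated by $\delta$; and the strict drop in degree forces $q(y)\delta=0$ already in $\mathbb{C}[\mathfrak{h}]$. Your first, character-theoretic route is valid but unnecessary, and the paper does not use it.
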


\begin{proof}
    By invariance of $q(y)$,  $q(y)\delta$ is anti-invariant.  So it is in the ideal generated by $\delta$.  But $\deg( q(y)\delta ) < \deg(\delta)$ since $q(y)$ has no constant term.  So $q(y)\delta = 0$.    
\end{proof}

By a symmetry argument, we also have $p(x)q(y)\delta = 0$ when $p(x)$ is a constant-term-free $W$-invariant polynomial. So we can only consider polynomials $p(x)q(y)\delta$ where $p$ and $q$ runs through representatives of the coinvariant algebra.  

{\it In theory}, it is then possible to use the definition in \eqref{lh0i} to make explicit computations of $\varDelta(q,t)$.

\end{document}